 \def \rot{\mathrm{rot}}
 \def \div{\mathrm{div}}
\def\eps{\varepsilon}
\newtheorem{prop}{Proposition}[section]
\newtheorem{lem}{Lemma}[section]
\newtheorem{definition}{Definition}[section]
\newtheorem{theorem}{Theorem}[section]
\newtheorem{problem}{Problem}[section]
\newcommand{\be}{\begin{equation}}
\newcommand{\ee}{\end{equation}}
\newcommand{\ba}{\begin{array}}
\newcommand{\ea}{\end{array}}
\newcommand{\bea}{\begin{eqnarray*}}
\newcommand{\eea}{\end{eqnarray*}}
\newcommand{\bean}{\begin{eqnarray}}
\newcommand{\eean}{\end{eqnarray}}
\newcommand{\R}{\mbox{\bf R}}
\def\Box{\leavevmode\vbox{\hrule
     \hbox{\vrule\kern5pt\vbox{\kern5pt}%
           \vrule}\hrule}}
\date{}
\author[,1]{\small{Soumaya Oueslati}\thanks{soumaya.oueslati1@cyu.fr}}
\author[,1]{\small{Christian Daveau}  \thanks{christian.daveau@cyu.fr}}
\author[,1]{\small{Abil Aubakirov}    \thanks{abil.aubakirov@cyu.fr}}
\affil[1]{\textsc{\small{Department of Mathematics, CNRS(UMR 8088), University CY Cergy Paris,2 avenue Adolphe Chauvin, 95302 Cergy-Pontoise, France}}}
\begin{document}
\title{ A new variational formulation with high order impedance boundary condition for the scattering problem in electromagnetism}
\maketitle
\abstract{
In this paper, we propose some variational formulations with the use of high order impedance boundary condition (HOIBC) to solve the scattering problem.  We study the existence and uniqueness  of the solution. Then,  a discretization  of these formulations is done.
We give validations of the HOIBC obtained with a MoM code that show the improvement
in accuracy over the standard impedance boundary condition (SIBC) computations.}

\noindent {\bf Keywords}: boundary element method, scattering problem, Lagrange multipliers, high order impedance boundary condition.

\section{Introduction}\label{intro}
The paper deals with computational electromagnetic to model thin coatings on perfect conducting objects. The method of integral equations is often used
to calculate the far field radiated by a homogeneous object \cite{RFH}. It allows to set the equations on the surface of the scattering obstacle which lowers the dimension of the problem. In counterpart, the
equations are non local and then lead after discretization to dense linear systems. This method reduces drastically the number of unknowns compared to other finite elements methods where
thin coatings is meshed. \\
We sometimes use with integral methods the perfectly conducting assumption neglecting the losses. However, it is important to model the losses and finite conductivity
of the metal by an impedance boundary condition.  This impedance boundary condition defines a relationship between the tangential electric and magnetic fields on a surface.
Usually, the impedance of the coating is assumed to be
the same for all incidence angles and polarizations. It is called standard impedance boundary condition  or  Leontovicht condition. We can see the papers \cite{ABENDL, Stratton-Chu} about it, but the list is not exhaustive.\\
This assumption is valid for thin coatings with high refractive index or significant losses. However, it is known to be poorly efficient  for coatings with
smaller refractive index or lower losses \cite{DSW}.
In this framework, some approximations of the HOIBC are proposed in \cite{RYTOV_1940,KARP_1967,senior_1995} which involve at most a first derivative of the field. Other strategies have also been established in \cite{R-S,R-S_DEC, OMBS, Stupfel_NOV_2005,Stupfel_APR_2005} that based on a polynomial approximation of the impedance operator is done in the spectral domain.
 Then, the tangential traces of electric and magnetic fields are related with the incident angle at each point of the surface since the approximation of the impedance is defined
 as a ratio of polynomials of the sine of the incidence angle for each polarization. Numerical results for two-dimensional cylinders with lossy and lossless dielectric coatings and for
 three-dimensional bodies of revolution is given in these papers.
 Later, in \cite{Stupfel_JUNE_2011} sufficient uniqueness conditions (SUC) have been established for the solutions of Maxwell's equations associated withe the particular HOIBCs proposed in \cite{Stupfel_NOV_2005}.
 
 This paper deals with the implementation of high order impedance boundary condition implemented in an integral equation.
An outline of the  article is as follows: Section \ref{Math-phys} presents the physical model and the framework. In section \ref{ApproxIBC}, we give an approximation of the impedance operator with Hodge operator and we give approximation of the impedance boundary condition. To my knowledge, this is the first time that the impedance condition is written in this form.
In section \ref{section_SUC}, we determine SUC for the solutions of Maxwell's equations associated with the impedance conditions. 
In the following, we derive a formulation and in section \ref{Existence_uniqueness} we prove  the existence and uniqueness of a solution for this variational formulation. Sections \ref{Discretization} and \ref{Lagrange multipliers} is devoted to the  description of
the discretization. Numerical results are presented in section \ref{results} and in  section \ref{conclusion} concluding remarks are done.

\section{Mathematical model of physical problem}\label{Math-phys}
In this section, we recall the model problem on which we work. 
We consider the scattering problem of electromagnetic waves by a perfect conducting body with a complex coating. Let $\Omega$ be a bounded domain with a Lipschitz-continuous boundary $\Gamma$. Let $n$ be the unit normal vector to $\Gamma$ directed to the exterior of $\Omega$.
We define $\Omega_+$ as the space of radiating  fields $(\textbf{E},\textbf{H})$ solutions of Maxwell equations. 
The total electromagnetic fields $(\textbf{E}, \textbf{H})$ in $\Omega_+$ therefore have the expression:
\begin{equation*}
\left\{
 \begin{array}{llll}
   \mathbf{E} = \mathbf{E}^{inc}+\mathbf{E}^{sc} \\
   \mathbf{H} = \mathbf{H}^{inc}+\mathbf{H}^{sc} \\
\end{array}
\right.
\end{equation*}
wehre  $(\textbf{E}^{inc}, \textbf{H}^{inc})$ is incident fields and the fields $ (\textbf{E}^{sc}, \textbf{H}^{sc})$ scattered by the object $\Omega$.\\


%
%
\begin{figure}[H]\label{fig1}
\centering
  \includegraphics[scale=0.3]{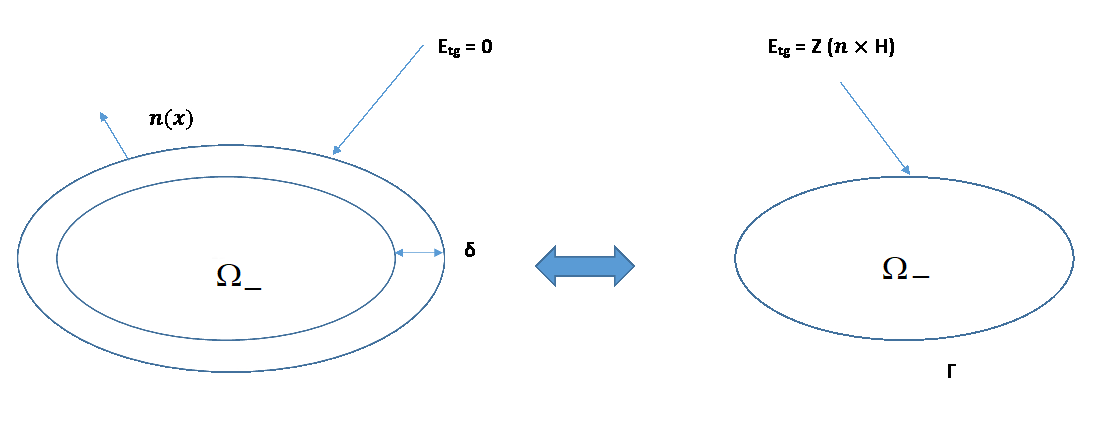}
  \caption{Equivalent impedance}
  \label{FigN1}
 \end{figure}
 
While the coating is replaced by the following impedance boundary condition on surface $\Gamma$, (see \Cref{FigN1}):
\begin{equation}\label{eq1.2}
\textbf{E}_{tg} - Z (n \times \textbf{H}) = 0 \,\,\,\,\,on\,\,\,\,\Gamma,
\end{equation}
where $Z$ is the impedance operator
, subscript tg denotes tangent component on the surface defined as:
$$\textbf{E}_{tg} = n \times(\textbf{E} \times n).$$
An approximation of  this impedance operator is given in the \Cref{ApproxIBC}, which depends  on layer thickness, on dielectric characteristics of medium of this layer. Also it depends on incident angle of incident electromagnetic plane wave. 


Waves propagate with constant wave number $k$ in the exterior unbounded domain $\Omega$. 
Clearly that the electromagnetic fields $(\textbf{E}, \textbf{H})$ is solution the following problem:
\begin{problem}\label{Max_Eq}
\begin{equation}
\left\{
 \begin{array}{llll}
 \nabla \times(\nabla\times\textbf{E} )-k^2 \textbf{E}=0\,\,\,\,\,\text{in} \,\,\, \Omega_+,  \\
 \nabla \times(\nabla\times\textbf{H} )-k^2 \textbf{H}=0\,\,\,\,\,\text{in} \,\,\, \Omega_+,  \\
   	   \textbf{E}_{tg} - Z (n \times \textbf{H}) = 0 \,\,\,\,\,on\,\,\,\,\Gamma.\\
  \lim_{r\rightarrow\infty} r(\mathbf{E}\times\mathbf{n}_r + \mathbf{H}) = 0.
\end{array}
\right.
\end{equation}
where $r = |\mathbf{x}|$ and $\mathbf{n}_r = \dfrac{\mathbf{x}}{|\mathbf{x}|}$.
\end{problem}   

Now, we shall show that problem \ref{Max_Eq} is well posed, using the result:
\begin{lem}\textbf{(Rellich)}: Let $\Omega$ be the open complement of a closed domain and $u\in L^2(\R^3 - \Omega)$ is a solution of the Helmholtz equation, satisfying
\begin{equation}
 \lim_{r\rightarrow\infty} \int_{|x| = r} |u(x)|^2 dx = 0 .
\end{equation}
Then $u = 0$, in $\R^3 - \Omega$.
\end{lem}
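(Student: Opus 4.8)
The plan is to exploit the rotational symmetry of the Helmholtz equation by expanding $u$ in spherical harmonics outside a large ball and then, using the far-field asymptotics of the radial factors together with the hypothesis on the spherical $L^2$-integral, to annihilate every coefficient. Concretely, I would choose $R_0$ large enough that $\Omega \subset B_{R_0}$, so that $u$ solves $\Delta u + k^2 u = 0$ in the exterior $\{|x|>R_0\}\subset \R^3-\Omega$. On that region I write
\begin{equation*}
u(r,\theta,\varphi)=\sum_{n=0}^{\infty}\sum_{m=-n}^{n} a_{nm}(r)\,Y_n^m(\theta,\varphi),
\end{equation*}
with $\{Y_n^m\}$ the orthonormal spherical harmonics on the unit sphere $S^2$. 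Substituting into the Helmholtz equation and separating variables shows that each radial coefficient solves the spherical Bessel equation, hence
\begin{equation*}
a_{nm}(r)=\alpha_{nm}\,j_n(kr)+\beta_{nm}\,y_n(kr),
\end{equation*}
where $j_n$ and $y_n$ are the spherical Bessel functions of the first and second kind.

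Next I would compute the sphere integral appearing in the hypothesis. By orthonormality of the $Y_n^m$ and the surface element $ds=r^2\,d\omega$,
\begin{equation*}
\int_{|x|=r}|u(x)|^2\,ds = r^2\sum_{n=0}^{\infty}\sum_{m=-n}^{n}|a_{nm}(r)|^2 .
\end{equation*}
This is a sum of nonnegative terms, so the assumption $\int_{|x|=r}|u|^2\,ds\to 0$ forces $r^2|a_{nm}(r)|^2\to 0$ as $r\to\infty$ for each fixed pair $(n,m)$, since each such term is dominated by the whole sum.

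The key step is then to read off from the far-field asymptotics
\begin{equation*}
j_n(kr)\sim\frac{1}{kr}\sin\!\Big(kr-\frac{n\pi}{2}\Big),\qquad y_n(kr)\sim-\frac{1}{kr}\cos\!\Big(kr-\frac{n\pi}{2}\Big)
\end{equation*}
that
\begin{equation*}
r^2|a_{nm}(r)|^2\sim\frac{1}{k^2}\Big|\alpha_{nm}\sin\!\big(kr-\tfrac{n\pi}{2}\big)-\beta_{nm}\cos\!\big(kr-\tfrac{n\pi}{2}\big)\Big|^2 .
\end{equation*}
The right-hand side is an oscillatory function of $r$ whose amplitude does not decay; unless $\alpha_{nm}=\beta_{nm}=0$ it stays bounded away from zero along a sequence $r_j\to\infty$. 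Since $r^2|a_{nm}(r)|^2\to 0$, this forces $\alpha_{nm}=\beta_{nm}=0$ for every $(n,m)$, whence $u\equiv 0$ in $\{|x|>R_0\}$.

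Finally I would propagate this vanishing inward: solutions of the constant-coefficient Helmholtz equation are real-analytic on their domain, and a real-analytic function vanishing on the nonempty open set $\{|x|>R_0\}$ vanishes on the whole connected exterior $\R^3-\Omega$, giving $u=0$ there as claimed. The main obstacle is the oscillation argument of the third step: because the hypothesis is a genuine limit rather than a Cesàro average, I must argue carefully that a nondecaying oscillatory term cannot tend to zero, which is exactly where the precise asymptotics of $j_n$ and $y_n$—and the fact that $\sin$ and $\cos$ of the same phase never vanish simultaneously—are essential. A secondary technical point is justifying the term-by-term manipulation of the series (convergence and the exchange of summation and integration), which follows from interior elliptic estimates for $u$ on compact subsets of the exterior.
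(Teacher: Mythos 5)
The paper never actually proves this lemma: it is stated as the classical Rellich uniqueness lemma and immediately used as a black box in Theorem 2.1, so there is no in-paper argument to compare yours against. Your proposal supplies the standard, self-contained proof of that classical result (the argument found in standard scattering-theory references such as Colton and Kress), and it is correct: the spherical-harmonic expansion is legitimate for a smooth exterior solution of the Helmholtz equation, Parseval's identity converts the hypothesis into $r^2|a_{nm}(r)|^2 \to 0$ for each fixed $(n,m)$, the large-argument asymptotics of $j_n$ and $y_n$ show that $r\,a_{nm}(r)$ differs by $o(1)$ from the $2\pi/k$-periodic function $\frac{1}{k}\bigl(\alpha_{nm}\sin(kr-\tfrac{n\pi}{2})-\beta_{nm}\cos(kr-\tfrac{n\pi}{2})\bigr)$, and a continuous periodic function that tends to zero at infinity must vanish identically, which forces $\alpha_{nm}=\beta_{nm}=0$; real-analyticity of solutions then propagates the vanishing from $\{|x|>R_0\}$ to the rest of the domain. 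Two points in your sketch deserve to be made explicit. First, in the oscillation step the coefficients $\alpha_{nm},\beta_{nm}$ are in general complex, so rather than amplitude-phase form it is cleaner to argue by periodicity: if $g(\phi)=\alpha\sin\phi-\beta\cos\phi$ satisfies $g(\phi)\to 0$ as $\phi\to\infty$, then $g(\phi_0)=\lim_{j\to\infty}g(\phi_0+2\pi j)=0$ for every $\phi_0$, and evaluating at $\phi_0=0,\pi/2$ gives $\beta=\alpha=0$. Second, your final unique-continuation step silently uses two hypotheses that the paper's (rather loosely worded) statement does not spell out but that are needed for the lemma to be true: the exterior domain must be connected (a bounded cavity component could carry a nonzero Dirichlet eigenfunction unaffected by the behavior at infinity), and the wave number $k$ must be real and nonzero (for complex $k$ the decaying solution $e^{ikr}/r$ defeats the conclusion). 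Both are implicit in the paper's setting, so with these remarks your proof is complete.
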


%
\begin{theorem}\label{SUC_theorem_1} 
The problem \ref{Max_Eq} admits a unique solution, if following relations are verified:
\begin{equation*}\label{SUC}
  \begin{cases}
    \Im(\mu) \leq 0, \\
    \Im(\epsilon) \leq 0, \\
    \Re (k_0 \int_{\Gamma}\mathbf{E}^* \cdot (\mathbf{n}\times\mathbf{H}) ds ) \geq 0. 
  \end{cases}
\end{equation*}
\end{theorem}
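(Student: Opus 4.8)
The plan is to prove uniqueness by the standard Rellich-type argument for exterior Maxwell problems. Since the problem is linear, it suffices to show that if $(\mathbf{E},\mathbf{H})$ solves Problem \ref{Max_Eq} with zero incident field (so the total field equals the scattered field), then $\mathbf{E}=\mathbf{H}=0$ in $\Omega_+$. The key identity is an energy balance obtained by integrating the Maxwell system against the conjugate fields over a large ball and applying the divergence/Green formula, which converts volume terms into a boundary term on $\Gamma$ plus a term on the sphere $|x|=R$ at infinity.

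**First I would** write Green's formula for the curl-curl operator on the region $\Omega_R$ between $\Gamma$ and a large sphere $S_R=\{|x|=R\}$. Multiplying the first equation by $\mathbf{E}^*$, integrating, and using $\int_{\Omega_R}(\nabla\times\nabla\times\mathbf{E})\cdot\mathbf{E}^* = \int_{\Omega_R}|\nabla\times\mathbf{E}|^2 - \int_{\partial\Omega_R}(\mathbf{n}\times(\nabla\times\mathbf{E}))\cdot\mathbf{E}^*$, one relates the interior Dirichlet energy to boundary flux terms; a parallel computation holds for $\mathbf{H}$. Combining these, and using the constitutive relations encoded in $\mu,\epsilon$ and $k_0$, the imaginary part of the total balance should isolate the quantity
\begin{equation*}
\Re\Bigl(k_0\int_{\Gamma}\mathbf{E}^*\cdot(\mathbf{n}\times\mathbf{H})\,ds\Bigr)
\end{equation*}
together with contributions $\Im(\mu)$ and $\Im(\epsilon)$ from the bulk. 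The three hypotheses of the theorem are exactly the sign conditions making each of these contributions non-negative (or non-positive consistently), so that the real/imaginary part of the flux through $S_R$ is forced to have a definite sign.

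**Next I would** take the limit $R\to\infty$ and invoke the Silver–Müller radiation condition $\lim_{r\to\infty} r(\mathbf{E}\times\mathbf{n}_r+\mathbf{H})=0$. The standard manipulation shows that the radiation condition plus the sign of the boundary term forces $\lim_{R\to\infty}\int_{S_R}|\mathbf{E}|^2\,ds=0$ (and likewise for $\mathbf{H}$). At this point the Rellich lemma quoted above applies componentwise: each Cartesian component of $\mathbf{E}$ satisfies the Helmholtz equation in the exterior domain (since $\nabla\times\nabla\times\mathbf{E}=\nabla(\nabla\cdot\mathbf{E})-\Delta\mathbf{E}$ and $\nabla\cdot\mathbf{E}=0$ away from sources), and the vanishing of the sphere integral gives $\mathbf{E}\equiv 0$, hence $\mathbf{H}\equiv 0$ as well.

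**The main obstacle** I expect is the careful bookkeeping that ties the impedance boundary condition \eqref{eq1.2} to the sign of the surface integral: one must substitute $\mathbf{E}_{tg}=Z(\mathbf{n}\times\mathbf{H})$ into $\int_\Gamma \mathbf{E}^*\cdot(\mathbf{n}\times\mathbf{H})\,ds$ and show that the hypotheses on the impedance operator $Z$ (through its dependence on $\mu,\epsilon$ as developed in Section \ref{ApproxIBC}) yield the third sign condition $\Re(k_0\int_\Gamma \mathbf{E}^*\cdot(\mathbf{n}\times\mathbf{H})\,ds)\ge 0$. Controlling the cross terms between $\mathbf{E}$ and $\mathbf{H}$ on the sphere as $R\to\infty$—so that the radiation condition genuinely kills the indefinite part of the flux—is the delicate analytic step; everything else is the routine Green's-formula and Rellich machinery.
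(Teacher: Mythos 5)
Your proposal is correct and follows essentially the route the paper intends: the paper states the Rellich lemma immediately before the theorem and then simply defers the argument to the cited reference (Stupfel--Poget--N\'ed\'elec, 2011), which carries out exactly the energy-balance, radiation-condition and Rellich machinery you describe. One small remark: the third hypothesis $\Re\bigl(k_0\int_{\Gamma}\mathbf{E}^*\cdot(\mathbf{n}\times\mathbf{H})\,ds\bigr)\geq 0$ is taken as an assumption on fields satisfying the impedance condition, so the step you flag as the ``main obstacle'' (deriving this sign from the operator $Z$) is not needed for this theorem itself --- that verification is precisely what the later SUC theorems for IBC1 and IBC2 accomplish through the conditions on the coefficients $a_j$, $b_j$.
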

\begin{proof}
This theorem is an adaptation of uniqueness that is determined in \cite{Stupfel_JUNE_2011}. 
It gives us sufficient condition on characteristics of medium to get unique solution of the problem \ref{Max_Eq}.
That is how we get sufficient uniqueness condition (SUC) in general form
\begin{equation}
 \Re( k_0 \int_{\Gamma} (\mathbf{n}\times\mathbf{H}) \cdot \textbf{E}^*)\geq 0 
\end{equation}
\end{proof}
\section{Framework}
We present preliminary results concerning some basic functional spaces and operators which will be used throughout this work.
\subsection{Notation and spaces}
\begin{definition}
Let be $L_D$ and $L_R$ operators defined
for all vector function $\mathbf{A}$ sufficiently smooth, such that $\mathbf{A} \cdot \mathbf{n} = 0$ by:
\begin{equation}\label{LD_LR}
L_D (\mathbf{A}) =\nabla_{\Gamma}(\div_{\Gamma} \mathbf{A}), \ \ \ \ L_R(\mathbf{A})= \mathbf{\rot}_{\Gamma}(\rot_{\Gamma} \mathbf{A}).
\end{equation}
\end{definition}

Now, we consider some  the following Sobolev spaces on the surface $\Gamma$:

 \begin{enumerate}
\item $H^{-1/2}(\div, \Gamma)$  defined by    
$$H^{-1/2}(\div,\ \Gamma) = \{ \mathbf{g}\in (H^{-1/2}(\Gamma))^3,\ \mathbf{g}\cdot\mathbf{n} = 0,\ \div_{\Gamma}( \mathbf{g}) \in (H^{-1/2}(\Gamma))^3 \}. $$
It is an Sobolev spaces equipped with the norm:
$$\| \mathbf{g} \|_{-1/2,\div_{\Gamma}} = \left( \| \mathbf{g} \|^2_{-1/2,H(\Gamma)} + \| \div_{\Gamma}( \mathbf{g} )\|^2_{-1/2,H(\Gamma)} \right)^{1/2}$$
 \item And also the classical space $H^{-1/2}(\rot, \Gamma) $ defined on the boundary of $\Omega$
$$H^{-1/2}(\rot,\ \Gamma) = \{ \mathbf{g} \in H^{-1/2}(\Gamma)^3,\ \mathbf{g}\cdot\mathbf{n} = 0,\ \mathbf{rot}_{\Gamma}( \mathbf{g}) \in H^{-1/2}(\Gamma)^3 \} $$
equipped with the norm denoted by
$$\| \mathbf{g} \|_{-1/2,\rot_{\Gamma}} = \left( \| \mathbf{g} \|^2_{-1/2,H(\Gamma)} + \| \mathbf{rot}_{\Gamma} (\mathbf{g} ) \|^2_{-1/2,H(\Gamma)} \right)^{1/2}$$
  
\end{enumerate}

\begin{prop}
We have duality relations:
$$\left( H^{-1/2}(\rot,\ \Gamma) \right)' = H^{-1/2}(\div,\ \Gamma) \ \ and \ \ \left( H^{-1/2}(\div,\ \Gamma) \right)' = H^{-1/2}(\rot,\ \Gamma) $$
and $\forall \mathbf{f}\in H^{-1/2}(\div,\ \Gamma); \ \ \forall \mathbf{g}\in H^{-1/2}(\rot,\ \Gamma) $ we have
$$\| \mathbf{f} \|_{(-1/2,\rot_{\Gamma})'} = \sup_{\mathbf{g} \neq 0 \in H^{-1/2}(\rot,\Gamma) }{ \frac{<\mathbf{f},\mathbf{g}>}{\| \mathbf{g} \|_{-1/2,\rot_{\Gamma}}} } = \| \mathbf{f} \|_{-1/2,\div_{\Gamma}} $$
$$\| \mathbf{g} \|_{(-1/2,\div_{\Gamma})'} = \sup_{\mathbf{f} \neq 0 \in H^{-1/2}(\div,\Gamma) }{ \frac{<\mathbf{g},\mathbf{f} >}{\| \mathbf{f} \|_{-1/2,\div_{\Gamma} }} } = \| \mathbf{g} \|_{-1/2,\rot_{\Gamma}} $$
\end{prop}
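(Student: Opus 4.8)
The plan is to derive both assertions of the Proposition from a single surface integration-by-parts identity together with the Hahn–Banach theorem, reducing the two symmetric statements to one via a rotation. First I would fix the duality pairing as the continuous extension of the $L^2(\Gamma)$ scalar product $\langle\mathbf f,\mathbf g\rangle=\int_\Gamma \mathbf f\cdot\mathbf g\,ds$ of smooth tangential fields. The mechanism justifying the extension is the pair of surface Stokes formulas: for smooth $\phi$ and tangential $\mathbf g$,
$$\int_\Gamma (\rot_\Gamma\mathbf g)\,\phi\,ds=\int_\Gamma \mathbf g\cdot\mathbf{curl}_\Gamma\phi\,ds,\qquad \int_\Gamma(\div_\Gamma\mathbf f)\,\phi\,ds=-\int_\Gamma \mathbf f\cdot\nabla_\Gamma\phi\,ds,$$
which exhibit $\div_\Gamma$ and $\rot_\Gamma$ as formally adjoint (up to sign) to $\nabla_\Gamma$ and $\mathbf{curl}_\Gamma$. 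Using a rotation identity of the form $\rot_\Gamma\mathbf g=\pm\div_\Gamma(\mathbf n\times\mathbf g)$, the map $R:\mathbf g\mapsto\mathbf n\times\mathbf g$ is an isometric isomorphism of $H^{-1/2}(\rot,\Gamma)$ onto $H^{-1/2}(\div,\Gamma)$; hence it suffices to prove one duality statement, the second following by precomposition with $R$.

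Second, I would establish continuity of the pairing, i.e. the bound $|\langle\mathbf f,\mathbf g\rangle|\le \|\mathbf f\|_{-1/2,\div_\Gamma}\,\|\mathbf g\|_{-1/2,\rot_\Gamma}$. This shows that $\mathbf f\mapsto\langle\mathbf f,\cdot\rangle$ is a bounded injection of $H^{-1/2}(\div,\Gamma)$ into $\left(H^{-1/2}(\rot,\Gamma)\right)'$, and simultaneously gives the inequality $\le$ in both norm identities, since the supremum defining the dual norm is controlled by this Cauchy–Schwarz-type estimate.

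Third comes the reverse inequality together with surjectivity of the duality map. Here I would use a Hodge-type decomposition of the trace spaces, writing every $\mathbf g\in H^{-1/2}(\rot,\Gamma)$ as $\mathbf g=\nabla_\Gamma p+\mathbf{curl}_\Gamma q+\mathbf h$ with scalar potentials $p,q$ in fractional Sobolev spaces on $\Gamma$ and a finite-dimensional harmonic remainder $\mathbf h$. Given $\mathbf f$, solving the corresponding Laplace–Beltrami problems produces, for each $\varepsilon>0$, a near-maximizing test field $\mathbf g$ with $\langle\mathbf f,\mathbf g\rangle$ recovering $\|\mathbf f\|_{-1/2,\div_\Gamma}^2$ up to $\varepsilon$; dividing by $\|\mathbf g\|_{-1/2,\rot_\Gamma}$ forces the supremum to be $\ge\|\mathbf f\|_{-1/2,\div_\Gamma}$. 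Combined with step two this yields the two norm equalities, and by Hahn–Banach every element of $\left(H^{-1/2}(\rot,\Gamma)\right)'$ is represented by a unique $\mathbf f\in H^{-1/2}(\div,\Gamma)$, which is exactly the claimed identification of duals.

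The \emph{hard part} will be carrying out the third step on a merely Lipschitz surface $\Gamma$, where the surface operators, the tangential-component maps and the Hodge decomposition exist only in the weak sense and the density of smooth tangential fields (needed so the $L^2$ pairing extends) is non-trivial. If a self-contained argument grows too heavy, the cleaner route is to invoke the trace theorem for $H(\mathrm{curl},\Omega)$: the maps $\mathbf u\mapsto\mathbf u\times\mathbf n$ and $\mathbf u\mapsto\mathbf u_{tg}$ surject onto $H^{-1/2}(\div,\Gamma)$ and $H^{-1/2}(\rot,\Gamma)$ respectively, and the Green formula $\int_\Omega(\nabla\times\mathbf u\cdot\mathbf v-\mathbf u\cdot\nabla\times\mathbf v)\,dx=\langle\mathbf u\times\mathbf n,\mathbf v_{tg}\rangle$ then delivers the duality and, through the open-mapping theorem, the norm identities in one stroke.
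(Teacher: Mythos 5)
The paper itself offers no proof of this Proposition: it is stated as a known result, in line with the neighbouring trace theorems that are cited to the reference [TE]. So your proposal must stand on its own, and as written it has two genuine gaps.

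First, your step two is logically unfounded in the order you propose. For $\mathbf f$ and $\mathbf g$ both lying only in $\left(H^{-1/2}(\Gamma)\right)^3$, the integral $\int_\Gamma \mathbf f\cdot\mathbf g\,ds$ has no meaning: the $L^2$ pairing extends to $H^{-1/2}\times H^{1/2}$, not to $H^{-1/2}\times H^{-1/2}$, so there is no ``Cauchy--Schwarz-type'' bound
$|\langle\mathbf f,\mathbf g\rangle|\le \|\mathbf f\|_{-1/2,\div_{\Gamma}}\,\|\mathbf g\|_{-1/2,\rot_{\Gamma}}$
available \emph{before} you introduce the structure that makes the pairing meaningful. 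The Hodge decomposition you defer to step three is in fact what defines and bounds the pairing: writing $\mathbf f=\nabla_\Gamma a+\mathbf{curl}_\Gamma b$ and $\mathbf g=\nabla_\Gamma p+\mathbf{curl}_\Gamma q$, the cross terms vanish and the diagonal terms reduce to pairings of $\div_\Gamma\mathbf f\in H^{-1/2}$ with $p\in H^{1/2}$ and of $b\in H^{1/2}$ with $\rot_\Gamma\mathbf g\in H^{-1/2}$. So the decomposition is not the ``reverse inequality'' step; it is the construction of the duality itself, and your steps two and three must be merged and reordered. Your fallback route (defining the pairing through the volume Green formula and the surjectivity of the $H(\mathrm{curl},\Omega)$ trace maps) avoids this circularity and is the cleanest repair; it is also the one consistent with the trace theorems the paper states immediately after the Proposition.

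Second, two of your quantitative claims do not hold as stated. The map $R:\mathbf g\mapsto\mathbf n\times\mathbf g$ is an isomorphism between the two spaces on a smooth surface, but it is not an isometry for the norms defined here (multiplication by $\mathbf n$ is not an isometry of $H^{-1/2}$), and on a merely Lipschitz $\Gamma$ --- which is the paper's standing assumption --- multiplication by $\mathbf n$ is not even bounded on $H^{-1/2}$; this failure is exactly why the Lipschitz theory replaces these two spaces by distinct ``parallel'' and ``perpendicular'' trace spaces. Hence the reduction of one duality statement to the other via $R$ is unavailable precisely in the hard case you flag. Relatedly, your near-maximizer argument and the open mapping theorem yield only norm \emph{equivalence}, $c\,\|\mathbf f\|_{-1/2,\div_{\Gamma}}\le\|\mathbf f\|_{(-1/2,\rot_{\Gamma})'}\le C\,\|\mathbf f\|_{-1/2,\div_{\Gamma}}$, not the equality with constant one asserted in the Proposition; obtaining the exact equality requires fixing compatible norm conventions on the potentials in the decomposition, a point your sketch passes over.
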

%
%
%
%
\begin{theorem}{\label{2.7}}
We define a linear application $\gamma_0$ by:
$$\forall \mathbf{u} \in H^1 (\Omega),\ \ \gamma_0 \mathbf{u} = \mathbf{u}_{|_{\Gamma}} $$
Then $\gamma_0$ is continuous from $H^1 (\Omega)$ to $H^{1/2}(\Gamma)$ equipped with the norm and also we have:
$$\forall \mathbf{u}\in H^1 (\Omega),\,\, \|\gamma_0 \mathbf{u}\|_{1/2,H(\Gamma)} \leq C(\Gamma) \| \mathbf{u} \|_{H^1(\Omega)} $$
$ \forall \,\, \mathbf{f}\in H^{1/2}(\Gamma),\ \ \exists \mathbf{u} \in H^1(\Omega) $ such that 
$$\mathbf{f} = \gamma_0 \mathbf{u} \ \ and \ \ \| \mathbf{u} \|_{H^1(\Omega)} \leq C(\Gamma)\| \mathbf{f} \|_{1/2,H(\Gamma)}$$
\end{theorem}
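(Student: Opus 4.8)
The plan is to establish both assertions—the boundedness of $\gamma_0$ and the existence of a bounded right inverse (a continuous lifting)—by the classical localization-and-flattening argument, reducing every estimate to a model computation on the half-space $\mathbb{R}^3_+ = \{x_3 > 0\}$ with boundary $\{x_3 = 0\}$. First I would cover $\Gamma$ by finitely many open sets $U_1,\dots,U_N$ on each of which, after a rigid motion, $\Gamma \cap U_j$ is the graph of a Lipschitz function and $\Omega \cap U_j$ lies on one side; composing with the associated bi-Lipschitz flattening map $\Phi_j$ sends $\Omega \cap U_j$ onto a piece of $\mathbb{R}^3_+$ and $\Gamma \cap U_j$ onto a piece of the plane $\{x_3=0\}$. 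Fixing a subordinate partition of unity $\{\chi_j\}$, both the trace norm $\|\cdot\|_{1/2,H(\Gamma)}$ and the norm $\|\cdot\|_{H^1(\Omega)}$ are equivalent (with constants depending only on $\Gamma$) to the corresponding sums over the charts, so it suffices to prove the two estimates in the half-space and then sum.

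For the continuity estimate I would first work with $\mathbf{u}\in C^\infty(\overline{\Omega})$, which is dense in $H^1(\Omega)$ since $\Gamma$ is Lipschitz. Transported to the half-space, the trace is $v(x',0)$ with $v$ smooth and decaying, and writing $\hat v(\xi',x_3)$ for the tangential Fourier transform in $x'\in\mathbb{R}^2$ the key identity is
\begin{equation*}
|\hat v(\xi',0)|^2 = -\int_0^\infty \partial_{x_3}\, |\hat v(\xi',x_3)|^2\, dx_3 = -2\,\Re \int_0^\infty \hat v(\xi',x_3)\,\overline{\partial_{x_3}\hat v(\xi',x_3)}\, dx_3 .
\end{equation*}
Multiplying by $(1+|\xi'|^2)^{1/2}$ and bounding the integrand by the elementary inequality $2AB \le A^2 + B^2$ with $A = (1+|\xi'|^2)^{1/2}|\hat v|$ and $B = |\partial_{x_3}\hat v|$, then integrating in $\xi'$, gives
\begin{equation*}
\int (1+|\xi'|^2)^{1/2}|\hat v(\xi',0)|^2\, d\xi' \le \int_0^\infty\!\!\int \big((1+|\xi'|^2)|\hat v|^2 + |\partial_{x_3}\hat v|^2\big)\, d\xi'\, dx_3 ,
\end{equation*}
whose left-hand side is $\|v(\cdot,0)\|_{H^{1/2}}^2$ and whose right-hand side is $\|v\|_{H^1(\mathbb{R}^3_+)}^2$ by Plancherel. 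Summing over the charts yields $\|\gamma_0 \mathbf{u}\|_{1/2,H(\Gamma)} \le C(\Gamma)\|\mathbf{u}\|_{H^1(\Omega)}$ for smooth $\mathbf{u}$, and the inequality extends to all of $H^1(\Omega)$ by density, which at the same time defines $\gamma_0$ as a bounded operator.

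For surjectivity I would exhibit an explicit bounded lifting. In the half-space, given $f \in H^{1/2}(\mathbb{R}^2)$, I set
\begin{equation*}
\hat u(\xi',x_3) = e^{-x_3\,(1+|\xi'|^2)^{1/2}}\,\hat f(\xi') ,
\end{equation*}
so that $\hat u(\xi',0)=\hat f(\xi')$, i.e. $u(\cdot,0)=f$. Evaluating $\int_0^\infty\!\int \big((1+|\xi'|^2)|\hat u|^2 + |\partial_{x_3}\hat u|^2\big)\,d\xi'\,dx_3$ reduces, after integrating the $x_3$-exponential, to $\int (1+|\xi'|^2)^{1/2}|\hat f(\xi')|^2\, d\xi' = \|f\|_{H^{1/2}}^2$ up to a fixed constant, giving $\|u\|_{H^1(\mathbb{R}^3_+)} \le C\|f\|_{H^{1/2}}$. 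Pulling these local liftings back through the maps $\Phi_j$ and gluing with the partition of unity $\{\chi_j\}$ produces $\mathbf{u}\in H^1(\Omega)$ with $\gamma_0\mathbf{u}=\mathbf{f}$ and $\|\mathbf{u}\|_{H^1(\Omega)} \le C(\Gamma)\|\mathbf{f}\|_{1/2,H(\Gamma)}$, which is the second assertion.

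The main obstacle is the low regularity of the boundary: because $\Gamma$ is only Lipschitz, the flattening maps $\Phi_j$ are merely bi-Lipschitz, so the step I must justify with care is that composition with such maps preserves membership in $H^1$ and in the fractional space $H^{1/2}$ with constants controlled by the Lipschitz character of $\Gamma$. This is precisely where the argument is more delicate than in the smooth-boundary case, and it is what fixes the dependence of $C(\Gamma)$ on $\Gamma$; once that reduction is in place the half-space Fourier computations above are routine.
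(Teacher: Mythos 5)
The paper does not actually prove this theorem: its entire proof is the single line ``See \cite{TE} pp.~110--113'', deferring both the trace estimate and the lifting to the cited lecture notes. Your proposal reconstructs the argument such a reference contains, and it is the standard, correct one: localization by a finite cover with a subordinate partition of unity, bi-Lipschitz flattening to $\mathbb{R}^3_+$, the integration-by-parts/Plancherel identity for the trace estimate, and the explicit Poisson-type lifting $\hat u(\xi',x_3)=e^{-x_3(1+|\xi'|^2)^{1/2}}\hat f(\xi')$ for the bounded right inverse; both half-space computations you display are exact. What your route buys over the paper's is self-containedness and an explicit lifting operator; what it leaves open is precisely the point you flag yourself, namely that composition with merely bi-Lipschitz charts preserves $H^1$ and $H^{1/2}$ with constants controlled by the Lipschitz character of $\Gamma$. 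That step is genuine but standard: for $H^1$ it is the a.e.\ chain rule together with boundedness of the Jacobian, and for $H^{1/2}$ one should work with the intrinsic Sobolev--Slobodeckij norm
$$\|f\|^2_{1/2}=\|f\|^2_{L^2(\Gamma)}+\int_\Gamma\int_\Gamma \frac{|f(x)-f(y)|^2}{|x-y|^3}\,ds_x\,ds_y,$$
which is invariant up to such constants under bi-Lipschitz changes of variables; one must also observe that this chart-based norm is equivalent to whatever norm the paper intends by $\|\cdot\|_{1/2,H(\Gamma)}$ (the paper never specifies it). With that lemma supplied, your proof is complete and is, in substance, the proof the paper delegates to the literature.
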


\begin{proof}: See \cite{TE} pp.110-113. \end{proof}

%
%
%
%
%

\begin{theorem}
We define linear application $\gamma_n$ as:
$$\forall \mathbf{u}\in [H^1(\Omega)]^3, \ \ \gamma_n \mathbf{u} = - \mathbf{n} \times \mathbf{u}_{|_{\Gamma}} $$
Then, $\gamma_n$ extends in unique form to a continuous linear application from $H(\rot,\Omega)$ to $H^{-1/2}(\div, \Gamma)$ and also we have:
$$\forall \mathbf{u}\in H(\rot,\Omega), \ \ \|\gamma_n \mathbf{u}\|_{-1/2,\div_{\Gamma} } \leq C(\Gamma) \| \mathbf{u} \|_{H(\rot, \Omega) }$$
$\forall \mathbf{f} \in H^{-1/2}(\div,\Gamma),\ \exists \mathbf{u}\in H(\rot, \Omega)$   such\ \ that\ \ 
$$ \mathbf{f}= \gamma_n \mathbf{u}\ \ and \ \ \|\mathbf{u}\|_{H( \rot, \Omega)}\leq C(\Gamma)\|\mathbf{f}\|_{-1/2,\div_{\Gamma}}$$
\end{theorem}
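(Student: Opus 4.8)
The plan is to prove the three assertions—uniqueness of the continuous extension, the trace estimate, and the existence of a bounded right inverse—by combining a Green's formula for the $\rot$ operator with the duality between $H^{-1/2}(\div,\Gamma)$ and $H^{-1/2}(\rot,\Gamma)$ recorded in the preceding proposition, together with the scalar lifting result of \Cref{2.7}. First I would record the Green formula, valid for smooth fields $\mathbf{u},\mathbf{v}\in C^\infty(\overline{\Omega})^3$,
\begin{equation*}
\int_{\Omega}\big(\rot\mathbf{u}\cdot\mathbf{v}-\mathbf{u}\cdot\rot\mathbf{v}\big)\,dx=\int_{\Gamma}(\mathbf{n}\times\mathbf{u})\cdot\mathbf{v}\,ds=-\langle\gamma_n\mathbf{u},\mathbf{v}_{tg}\rangle,
\end{equation*}
where $\mathbf{v}_{tg}=\mathbf{n}\times(\mathbf{v}\times\mathbf{n})$ and I used that $\mathbf{n}\times\mathbf{u}$ is tangential, so the surface pairing only sees $\mathbf{v}_{tg}$. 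This identity is the engine of the whole argument: its left-hand side makes sense for every $\mathbf{u}\in H(\rot,\Omega)$, and I will use it to \emph{define} $\gamma_n\mathbf{u}$ for such $\mathbf{u}$.

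Next, for a fixed $\mathbf{u}\in H(\rot,\Omega)$ I would turn the right-hand side into a functional on tangential boundary data. Given a smooth tangential field $\boldsymbol{\phi}$ on $\Gamma$, lift it (componentwise, using the surjectivity half of \Cref{2.7}) to some $\mathbf{v}\in H^1(\Omega)^3$ with $\mathbf{v}_{tg}=\boldsymbol{\phi}$, and set
\begin{equation*}
\langle\gamma_n\mathbf{u},\boldsymbol{\phi}\rangle:=-\int_{\Omega}\big(\rot\mathbf{u}\cdot\mathbf{v}-\mathbf{u}\cdot\rot\mathbf{v}\big)\,dx.
\end{equation*}
Two points must be checked. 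First, the value is independent of the chosen lifting: the integrand integrates to zero whenever $\mathbf{v}_{tg}=0$, because such $\mathbf{v}$ lies in the closure of $C^\infty_0(\Omega)^3$ in $H(\rot,\Omega)$ and for compactly supported test fields the formula reduces to the very definition of the weak $\rot$. Second, Cauchy–Schwarz gives $|\langle\gamma_n\mathbf{u},\boldsymbol{\phi}\rangle|\le\|\mathbf{u}\|_{H(\rot,\Omega)}\,\|\mathbf{v}\|_{H(\rot,\Omega)}\le C(\Gamma)\|\mathbf{u}\|_{H(\rot,\Omega)}\|\boldsymbol{\phi}\|_{-1/2,\rot_\Gamma}$. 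Since the tangential traces of $H^1$-fields are dense in $H^{-1/2}(\rot,\Gamma)$, the functional extends by continuity, and the duality identity of the proposition identifies it with an element $\gamma_n\mathbf{u}\in H^{-1/2}(\div,\Gamma)$ whose norm equals this dual norm; this is exactly the stated continuity estimate. Density of $C^\infty(\overline{\Omega})^3$ in $H(\rot,\Omega)$ then shows the extension agrees with $-\mathbf{n}\times\mathbf{u}|_\Gamma$ on smooth fields and is the unique continuous one.

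For the bounded right inverse I would solve an auxiliary coercive problem: given $\mathbf{f}\in H^{-1/2}(\div,\Gamma)$, find $\mathbf{u}\in H(\rot,\Omega)$ with
\begin{equation*}
\int_{\Omega}\big(\rot\mathbf{u}\cdot\rot\mathbf{v}+\mathbf{u}\cdot\mathbf{v}\big)\,dx=-\langle\mathbf{f},\mathbf{v}_{tg}\rangle\qquad\forall\,\mathbf{v}\in H(\rot,\Omega).
\end{equation*}
The bilinear form is continuous and coercive on $H(\rot,\Omega)$, and the linear form $\mathbf{v}\mapsto-\langle\mathbf{f},\mathbf{v}_{tg}\rangle$ is bounded by $C(\Gamma)\|\mathbf{f}\|_{-1/2,\div_\Gamma}\|\mathbf{v}\|_{H(\rot,\Omega)}$ via the duality pairing, so Lax–Milgram yields a unique $\mathbf{u}$ with $\|\mathbf{u}\|_{H(\rot,\Omega)}\le C(\Gamma)\|\mathbf{f}\|_{-1/2,\div_\Gamma}$. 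Comparing the weak equation with the Green formula applied to this $\mathbf{u}$ identifies $\gamma_n\mathbf{u}=\mathbf{f}$, which is the desired right inverse.

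The main obstacle is the well-definedness and norm-matching step of the second paragraph: one must pass from the $H^{1/2}$-controlled $H^1$-lifting furnished by \Cref{2.7} to an $H^{-1/2}(\rot,\Gamma)$-controlled estimate, and one must know that the tangential traces of $H^1$-fields are dense in $H^{-1/2}(\rot,\Gamma)$ so that the functional is determined on all of the dual space. Both rest on the precise characterization of $H^{-1/2}(\div,\Gamma)$ and $H^{-1/2}(\rot,\Gamma)$ as mutually dual trace spaces stated in the proposition; once that structure and the density of smooth fields in $H(\rot,\Omega)$ are in hand, the remaining computations are routine.
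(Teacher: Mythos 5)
The paper's own ``proof'' of this theorem is nothing but the citation ``see \cite{TE}, pp.~124--125'', so there is no argument to compare yours against; judged on its own merits, your proposal contains genuine gaps. The central one is in the continuity step. Defining $\langle\gamma_n\mathbf{u},\boldsymbol{\phi}\rangle$ through an $H^1$-lifting $\mathbf{v}$ of $\boldsymbol{\phi}$ is sound, but \Cref{2.7} only gives $\|\mathbf{v}\|_{H^1(\Omega)}\leq C\|\boldsymbol{\phi}\|_{1/2,H(\Gamma)}$, hence $|\langle\gamma_n\mathbf{u},\boldsymbol{\phi}\rangle|\leq C\|\mathbf{u}\|_{H(\rot,\Omega)}\|\boldsymbol{\phi}\|_{1/2,H(\Gamma)}$: this shows only that $\gamma_n\mathbf{u}$ acts on tangential $H^{1/2}$ fields. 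To conclude $\gamma_n\mathbf{u}\in H^{-1/2}(\div,\Gamma)$ with the stated bound you need $|\langle\gamma_n\mathbf{u},\boldsymbol{\phi}\rangle|\leq C\|\mathbf{u}\|_{H(\rot,\Omega)}\|\boldsymbol{\phi}\|_{-1/2,\rot_{\Gamma}}$, and your route to it requires a lifting with $\|\mathbf{v}\|_{H(\rot,\Omega)}\leq C\|\boldsymbol{\phi}\|_{-1/2,\rot_{\Gamma}}$. Neither \Cref{2.7} nor the duality proposition supplies such a lifting: the $H^{1/2}$ norm is the \emph{stronger} one (tangential $H^{1/2}$ fields embed into $H^{-1/2}(\rot,\Gamma)$, not conversely), and duality between two spaces does not manufacture bounded liftings. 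What you are actually invoking is the surjectivity half of the $\Pi_{\Gamma}$ trace theorem --- the companion theorem stated right after this one, which is every bit as deep --- so the argument is circular. You flag this as ``the main obstacle,'' but the claim that it follows from the duality structure is precisely where the proof breaks. The standard repair is a different idea altogether: having shown $\gamma_n\mathbf{u}\in H^{-1/2}(\Gamma)^3$ as you did, one proves $\div_{\Gamma}(\gamma_n\mathbf{u})\in H^{-1/2}(\Gamma)$ via the identity $\div_{\Gamma}(\mathbf{n}\times\mathbf{u})=-\mathbf{n}\cdot(\rot\mathbf{u})_{|_{\Gamma}}$: since $\div(\rot\mathbf{u})=0$, the field $\rot\mathbf{u}$ belongs to $H(\div,\Omega)$, and the normal-trace theorem for $H(\div,\Omega)$ --- which your duality argument \emph{does} prove, because there the scalar $H^{1/2}$--$H^{-1/2}$ duality matches the lifting of \Cref{2.7} exactly --- gives $\|\mathbf{n}\cdot\rot\mathbf{u}\|_{-1/2,H(\Gamma)}\leq C\|\rot\mathbf{u}\|_{L^2(\Omega)}$. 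That identity is the missing ingredient placing the trace in $H^{-1/2}(\div,\Gamma)$ with the correct norm.

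The Lax--Milgram step has two further problems. First, boundedness of the linear form $\mathbf{v}\mapsto-\langle\mathbf{f},\mathbf{v}_{tg}\rangle$ on $H(\rot,\Omega)$ requires $\|\mathbf{v}_{tg}\|_{-1/2,\rot_{\Gamma}}\leq C\|\mathbf{v}\|_{H(\rot,\Omega)}$, i.e.\ the continuity half of the same $\Pi_{\Gamma}$ theorem, again used silently. Second, the final identification is wrong: testing your weak equation with fields in $C^{\infty}_0(\Omega)^3$ gives $\rot\rot\mathbf{u}+\mathbf{u}=0$ in $\Omega$, so Green's formula must be applied to $\mathbf{w}=\rot\mathbf{u}$ (which lies in $H(\rot,\Omega)$ since $\rot\mathbf{w}=-\mathbf{u}$), not to $\mathbf{u}$; it yields $\mathbf{n}\times\rot\mathbf{u}=\mathbf{f}$, whereas nothing in the problem forces $-\mathbf{n}\times\mathbf{u}=\mathbf{f}$. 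The field whose $\gamma_n$-trace equals $\mathbf{f}$ is therefore $-\rot\mathbf{u}$ (or $\rot\mathbf{u}$ after flipping the sign of the data), and it does satisfy $\|\rot\mathbf{u}\|_{H(\rot,\Omega)}=\|\mathbf{u}\|_{H(\rot,\Omega)}\leq C\|\mathbf{f}\|_{-1/2,\div_{\Gamma}}$. With that correction, and granting the continuity of $\Pi_{\Gamma}$, your surjectivity argument is a legitimate and standard construction of the bounded right inverse.
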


\begin{proof}: See \cite{TE} pp.124-125 \end{proof}

\begin{theorem}
We define linear application $\Pi_{\Gamma}$ as:
$$\forall \mathbf{u}\in [H^1(\Omega)]^3, \ \ \Pi_{\Gamma} \mathbf{u} = - \mathbf{n} \times (\mathbf{n} \times \mathbf{u}_{|_{\Gamma}}) $$
Then, $\Pi_{\Gamma}$ extends in unique form to a continuous linear application from $H(\rot,\Omega)$ to $H^{-1/2}(\rot, \Gamma)$ and also we have:
$$\forall \mathbf{u}\in H(\rot,\Omega), \ \ \|\Pi_{\Gamma} \mathbf{u}\|_{-1/2,H^{}(\rot, \Gamma) } \leq C(\Gamma) \|\mathbf{u}\|_{H( \rot, \Omega)}$$
$\forall \mathbf{g}\in H^{-1/2}(\rot,\Gamma),\ \exists \mathbf{u}\in H(\rot, \Omega)$ such  that 
$$\mathbf{g} = \Pi_{\Gamma} \mathbf{u}\ \ and \ \ \|\mathbf{u}\|_{H( \rot, \Omega) }\leq C(\Gamma)\|\mathbf{g}\|_{-1/2,\rot_{\Gamma}}$$
\end{theorem}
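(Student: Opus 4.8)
The plan is to derive this theorem directly from the preceding result on $\gamma_n$ by observing that $\Pi_{\Gamma}$ factors through $\gamma_n$ via a quarter‑turn rotation in the tangent plane. The starting point is the pointwise identity, valid for any sufficiently smooth field $\mathbf{u}$,
$$\Pi_{\Gamma}\mathbf{u} = -\mathbf{n}\times(\mathbf{n}\times\mathbf{u}_{|\Gamma}) = \mathbf{n}\times(\gamma_n\mathbf{u}),$$
which holds because $\gamma_n\mathbf{u} = -\mathbf{n}\times\mathbf{u}_{|\Gamma}$. Introducing the rotation operator $R\mathbf{g} := \mathbf{n}\times\mathbf{g}$ acting on tangential fields, this reads $\Pi_{\Gamma} = R\circ\gamma_n$. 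The core of the argument is then to show that $R$ is a bounded isomorphism from $H^{-1/2}(\div,\Gamma)$ onto $H^{-1/2}(\rot,\Gamma)$; once that is established, every assertion of the theorem follows by composing with the corresponding statement for $\gamma_n$.

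For the isomorphism step I would use the surface calculus identities
$$\rot_{\Gamma}(\mathbf{n}\times\mathbf{g}) = \div_{\Gamma}\mathbf{g}, \qquad \mathbf{n}\times(\mathbf{n}\times\mathbf{g}) = -\mathbf{g},$$
the second holding for tangential $\mathbf{g}$ (i.e. $\mathbf{g}\cdot\mathbf{n}=0$). The first identity shows that $R$ sends a field whose surface divergence is in $H^{-1/2}(\Gamma)$ to a field whose surface rotational is in $H^{-1/2}(\Gamma)$, and, since $R$ acts pointwise as an orthogonal rotation, it preserves the $H^{-1/2}(\Gamma)$ norm of the field itself; comparing the graph norms term by term yields $\|R\mathbf{g}\|_{-1/2,\rot_{\Gamma}} = \|\mathbf{g}\|_{-1/2,\div_{\Gamma}}$, so $R$ is an isometry. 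The second identity gives $R^2 = -\mathrm{Id}$ on tangential fields, hence $R$ is invertible with $R^{-1} = -R$, and therefore an isometric isomorphism between the two trace spaces.

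With this in hand the continuity bound is immediate: for $\mathbf{u}\in H(\rot,\Omega)$,
$$\|\Pi_{\Gamma}\mathbf{u}\|_{-1/2,\rot_{\Gamma}} = \|\mathbf{n}\times\gamma_n\mathbf{u}\|_{-1/2,\rot_{\Gamma}} = \|\gamma_n\mathbf{u}\|_{-1/2,\div_{\Gamma}} \leq C(\Gamma)\|\mathbf{u}\|_{H(\rot,\Omega)},$$
the final inequality being the continuity of $\gamma_n$ proved above. Uniqueness of the continuous extension follows from the density of $[H^1(\Omega)]^3$ in $H(\rot,\Omega)$: the identity $\Pi_{\Gamma}=R\circ\gamma_n$ holds on this dense subspace and both sides are continuous, so it persists on all of $H(\rot,\Omega)$. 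For the lifting statement, given $\mathbf{g}\in H^{-1/2}(\rot,\Gamma)$ I would set $\mathbf{f} := R^{-1}\mathbf{g} = -\mathbf{n}\times\mathbf{g}\in H^{-1/2}(\div,\Gamma)$ and invoke the surjectivity of $\gamma_n$ to obtain $\mathbf{u}\in H(\rot,\Omega)$ with $\gamma_n\mathbf{u}=\mathbf{f}$ and $\|\mathbf{u}\|_{H(\rot,\Omega)}\leq C(\Gamma)\|\mathbf{f}\|_{-1/2,\div_{\Gamma}}$; then $\Pi_{\Gamma}\mathbf{u}=R\mathbf{f}=\mathbf{g}$, and the isometry $\|\mathbf{f}\|_{-1/2,\div_{\Gamma}}=\|\mathbf{g}\|_{-1/2,\rot_{\Gamma}}$ delivers the required norm estimate.

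The main obstacle I anticipate is the rigorous justification of the surface identities, in particular the commutation $\rot_{\Gamma}(\mathbf{n}\times\mathbf{g})=\div_{\Gamma}\mathbf{g}$, on a merely Lipschitz boundary and at the low regularity level $H^{-1/2}$. On smooth tangential fields these relations are classical, but transporting them to the distributional trace spaces requires density together with the dual (weak) definitions of $\div_{\Gamma}$ and $\rot_{\Gamma}$, and the sign conventions in those definitions must be tracked consistently so that $R$ is exactly an isometry rather than merely bounded with bounded inverse.
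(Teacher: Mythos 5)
Your argument is correct in outline, but it cannot be compared against an internal proof because the paper does not give one: its ``proof'' is the citation ``See \cite{TE} pp.~125--126'', just as the preceding theorem on $\gamma_n$ is cited to pp.~124--125 of the same thesis. What you do instead is derive the $\Pi_{\Gamma}$ statement from the $\gamma_n$ statement via the factorization $\Pi_{\Gamma}=R\circ\gamma_n$, $R\mathbf{g}=\mathbf{n}\times\mathbf{g}$, using $\mathrm{rot}_{\Gamma}(\mathbf{n}\times\mathbf{g})=\mathrm{div}_{\Gamma}\,\mathbf{g}$ and $R^{-1}=-R$ on tangential fields; this is the standard and natural reduction, and it buys a self-contained derivation inside the paper's own framework, whereas the cited source presumably proves the tangential-trace theorem directly. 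One claim you make is genuinely too strong and should be weakened: $R$ is \emph{not} an isometry from $H^{-1/2}(\div,\Gamma)$ to $H^{-1/2}(\rot,\Gamma)$. The norm $\|\cdot\|_{-1/2,H(\Gamma)}$ is nonlocal, and multiplication by the spatially varying field $\mathbf{n}$ does not preserve it; pointwise orthogonality of the rotation only gives preservation of $L^2$-type norms. What is true (for $\Gamma$ smooth enough that multiplication by $\mathbf{n}$ is bounded on $H^{-1/2}(\Gamma)$ --- this fails in general for merely Lipschitz $\Gamma$, where $\mathbf{n}\in L^{\infty}$ only) is that $R$ and $R^{-1}=-R$ are bounded, hence $R$ is an isomorphism with constants depending on $\Gamma$. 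Since every estimate in the theorem carries a constant $C(\Gamma)$, this weaker statement is all you need: replace ``isometry'' by ``bounded isomorphism'' in the continuity bound and in the final norm chain of the lifting argument, and your proof goes through. The Lipschitz-regularity caveat you raise at the end is real, but it is inherited from the paper itself, which states the trace theorems on a Lipschitz boundary while relying on a reference that effectively requires more smoothness.
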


\begin{proof}: See \cite{TE} pp.125-126 \end{proof}
\subsection{Properties of integral operators}
%
%
%
%
%
%
Some solutions of the problems of diffraction of electromagnetic waves can be expressed with the help of potential and in particular the potential of single and dual layer defined on the surface of the obstacle. We present here the properties of the harmonic potential.
\begin{definition}
We introduce the potentials $(B-S)$ and $Q$, that are defined by
\begin{equation}
(B-S)\mathbf{J} := \int_{\Gamma} \left( G(x,y)\mathbf{J}(y) +\frac{1}{k^2}\nabla_x G(x,y) \div_{\Gamma}\mathbf{J} \right) d\Gamma(y)
\end{equation}
\begin{equation}
Q\mathbf{M} := \int_{\Gamma} \nabla_y G(x,y) \times \mathbf{M} d\Gamma(y)
\end{equation}
and $G(x,y)$ is the Green kernel giving the outgoing solutions to the scalar Helmholtz equation.
\end{definition}
We have some results \cite{Lange-1995}, 
\begin{theorem}\label{thm_oper_Q}
The operator $Q$ is continuous from $H^{-1/2}(\div,\Gamma)$ to $H^{-1/2}(\rot, \Gamma)$ and we have that:
\begin{equation}
 |(\mathbf{n}\times Q + \frac{I}{2}) \mathbf{M} |_{-1/2,\div_{\Gamma}} \leq C | \mathbf{M} |_{-1/2,\div_{\Gamma}} \ \ \forall \mathbf{M}\in H^{-1/2}(\div,\Gamma)
\end{equation}
\end{theorem}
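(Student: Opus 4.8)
The plan is to reduce $Q$ to the curl of a vector single-layer potential and then to read off both assertions from the mapping and trace results of Section~3 together with the classical jump relations for radiating Maxwell fields. Since $G(x,y)$ depends only on $|x-y|$ we have $\nabla_y G = -\nabla_x G$, and because $\mathbf{M}=\mathbf{M}(y)$ is independent of $x$,
\begin{equation*}
Q\mathbf{M}(x)=-\int_{\Gamma}\nabla_x G(x,y)\times\mathbf{M}(y)\,d\Gamma(y)=-\nabla\times\Psi\mathbf{M}(x),\qquad \Psi\mathbf{M}(x):=\int_{\Gamma}G(x,y)\mathbf{M}(y)\,d\Gamma(y).
\end{equation*}
First I would invoke the smoothing of the single layer, namely that $\Psi$ maps $H^{-1/2}(\Gamma)^3$ continuously into $H^1(\Omega)^3$; in particular $Q\mathbf{M}=-\nabla\times\Psi\mathbf{M}\in L^2(\Omega)^3$. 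The div-conforming regularity of $\mathbf{M}$ enters through the divergence: integrating by parts on $\Gamma$ and using $\mathbf{M}\cdot\mathbf{n}=0$ gives $\div\Psi\mathbf{M}(x)=\int_{\Gamma}G(x,y)\,\div_\Gamma\mathbf{M}(y)\,d\Gamma(y)$, the single layer of $\div_\Gamma\mathbf{M}\in H^{-1/2}(\Gamma)$, so $\div\Psi\mathbf{M}\in H^1(\Omega)$. Since away from $\Gamma$ each component of $\Psi\mathbf{M}$ solves the Helmholtz equation, the field $Q\mathbf{M}$ satisfies $\nabla\times(\nabla\times Q\mathbf{M})=k^2\,Q\mathbf{M}$ and $\nabla\times Q\mathbf{M}=-\nabla(\div\Psi\mathbf{M})-k^2\Psi\mathbf{M}\in L^2(\Omega)^3$; hence $Q\mathbf{M}\in H(\rot,\Omega)$ with $\|Q\mathbf{M}\|_{H(\rot,\Omega)}\leq C\,|\mathbf{M}|_{-1/2,\div_\Gamma}$.

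The continuity of $Q$ from $H^{-1/2}(\div,\Gamma)$ into $H^{-1/2}(\rot,\Gamma)$ is then immediate from the trace theorem for $\Pi_\Gamma$: applying it to $Q\mathbf{M}\in H(\rot,\Omega)$ yields the tangential trace $\Pi_\Gamma(Q\mathbf{M})\in H^{-1/2}(\rot,\Gamma)$ together with
\begin{equation*}
\|\Pi_\Gamma(Q\mathbf{M})\|_{-1/2,\rot_\Gamma}\leq C\,\|Q\mathbf{M}\|_{H(\rot,\Omega)}\leq C\,|\mathbf{M}|_{-1/2,\div_\Gamma}.
\end{equation*}

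For the estimate I would use the jump relation. The field $-\nabla\times\Psi\mathbf{M}$ is a radiating Maxwell field in $\R^3\setminus\Gamma$ generated by the surface current $\mathbf{M}$, so its rotated tangential trace $\mathbf{n}\times(\cdot)$ jumps by $\mathbf{M}$ across $\Gamma$. Writing $Q$ for the principal-value boundary operator and taking the one-sided limits from within $\Omega$ and from the exterior, the magnetic jump relation gives that the trace from within $\Omega$ equals $\mathbf{n}\times Q\mathbf{M}+\tfrac12\mathbf{M}$. Thus $(\mathbf{n}\times Q+\tfrac12 I)\mathbf{M}$ is precisely this one-sided trace of $Q\mathbf{M}$, which equals $-\gamma_n(Q\mathbf{M}|_\Omega)$ up to the sign fixed in the definition of $\gamma_n$. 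The trace theorem for $\gamma_n$ now yields
\begin{equation*}
\big|(\mathbf{n}\times Q+\tfrac12 I)\mathbf{M}\big|_{-1/2,\div_\Gamma}=\big|\gamma_n(Q\mathbf{M}|_\Omega)\big|_{-1/2,\div_\Gamma}\leq C\,\|Q\mathbf{M}\|_{H(\rot,\Omega)}\leq C\,|\mathbf{M}|_{-1/2,\div_\Gamma},
\end{equation*}
which is the claimed bound.

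The hard part will be the rigorous justification of the jump relation directly in $H^{-1/2}(\div,\Gamma)$, since the pointwise derivation presupposes a smooth density. I would first establish it for smooth tangential $\mathbf{M}$, where the singular integral and its nontangential one-sided limits can be computed explicitly, and then pass to the limit by density using the continuity bounds already obtained. A secondary point demanding care is the bookkeeping of signs and of the factor $\tfrac12$: one must track consistently the passage between $\nabla_y G$ and $\nabla_x G$, the orientation of the exterior normal $\mathbf{n}$, and the two trace conventions $\gamma_n$ and $\Pi_\Gamma$, so that it is indeed the trace from within $\Omega$---carrying $+\tfrac12 I$---that is identified, rather than its exterior counterpart carrying $-\tfrac12 I$.
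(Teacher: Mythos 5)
The paper itself gives no proof of this theorem: it is stated as a result imported from Lange's thesis (``We have some results \cite{Lange-1995}''), so there is no internal argument to compare against line by line. Your proposal supplies the standard potential-theoretic proof that underlies the cited result, and it is correct in outline: the reduction $Q\mathbf{M}=-\nabla\times\Psi\mathbf{M}$ is exact (using $\nabla_yG=-\nabla_xG$); the identity $\div\,\Psi\mathbf{M}=\Psi(\div_{\Gamma}\mathbf{M})$ for tangential $\mathbf{M}$ converts div-conformity of the density into $H^1$-regularity of $\div\,\Psi\mathbf{M}$; together with $\Delta\Psi\mathbf{M}+k^2\Psi\mathbf{M}=0$ off $\Gamma$ this yields $\|Q\mathbf{M}\|_{H(\rot,\Omega)}\leq C\,\|\mathbf{M}\|_{-1/2,\div_{\Gamma}}$; and the two assertions then follow from the paper's own trace theorems for $\Pi_{\Gamma}$ and $\gamma_n$. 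Your sign bookkeeping is also right: with the paper's convention for $Q$, the rotated trace from inside $\Omega$ is the one carrying $+\tfrac12 I$, so that $(\mathbf{n}\times Q+\tfrac12 I)\mathbf{M}=-\gamma_n(Q\mathbf{M}|_{\Omega})$ and the $\gamma_n$ trace theorem closes the estimate. What your route buys is self-containedness: it uses only the trace results already stated in Section 3 plus classical single-layer theory, instead of deferring to an unpublished thesis. One caveat on the step you flag as hard: for general $\mathbf{M}\in H^{-1/2}(\div,\Gamma)$ the principal-value integral has no pointwise meaning, so the boundary operator $Q$ in the theorem must be read as the continuous extension from smooth densities of the p.v.\ operator; your density argument is then not circular precisely because the continuity needed for the limit passage is supplied by the trace bounds you prove first (traces of $Q\mathbf{M}\in H(\rot,\Omega)$), the smooth-density jump relation serving only to identify that extension with $\mathbf{n}\times Q+\tfrac12 I$. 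With that reading, your proof is complete in all essentials.
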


\begin{theorem}\label{thm_oper_BmS}
The operator $(B-S)$ is an isomorphisme from $H^{-1/2}(\div,\Gamma)$ to $H^{-1/2}(\rot,\Gamma)$ and it verifies the inequality:
\begin{equation}
 \| (B-S) \boldsymbol\phi \|_{-1/2,\rot_{\Gamma} } \leq C \| \boldsymbol\phi \|_{-1/2,\div_{\Gamma} }
\end{equation}
and the coercivity relation $\forall \boldsymbol\phi \in H^{-1/2}(\div, \Gamma)$:
\begin{equation}
 \Re (<\boldsymbol\phi, (B-S)\boldsymbol\phi>) \geq C \| \boldsymbol\phi \|^2_{-1/2,\div_{\Gamma} }
\end{equation}
\end{theorem}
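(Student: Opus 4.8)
The plan is to prove the three assertions in the stated order — continuity, the coercivity relation, and then the isomorphism property — the last being obtained from the coercivity through the Fredholm alternative together with the uniqueness already established in \Cref{SUC_theorem_1}. Throughout I write $V$ for the scalar single-layer potential $V\psi=\int_{\Gamma}G(x,y)\psi(y)\,d\Gamma(y)$, so that, after taking the tangential trace, $(B-S)$ decomposes as $(B-S)\boldsymbol\phi = V\boldsymbol\phi + \frac{1}{k^{2}}\nabla_{\Gamma}V(\div_{\Gamma}\boldsymbol\phi)$.

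For continuity I would exploit the order-$(-1)$ smoothing of the scalar operator, $V:H^{s}(\Gamma)\to H^{s+1}(\Gamma)$. Applying this componentwise to $V\boldsymbol\phi$, and composing it with $\div_{\Gamma}$ and $\nabla_{\Gamma}$ for the second term, controls both contributions by $\|\boldsymbol\phi\|_{-1/2,\div_{\Gamma}}$; that the image naturally lands in $H^{-1/2}(\rot,\Gamma)$ rather than in $H^{-1/2}(\div,\Gamma)$ is forced by the duality of the Proposition above, since $(B-S)\boldsymbol\phi$ is tested against fields in $H^{-1/2}(\div,\Gamma)$.

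The heart of the proof is the coercivity relation, and here I expect the main obstacle. Pairing $(B-S)\boldsymbol\phi$ with $\boldsymbol\phi$ and integrating the gradient term by parts on the closed surface, via $\langle\boldsymbol\phi,\nabla_{\Gamma}V(\div_{\Gamma}\boldsymbol\phi)\rangle=-\langle\div_{\Gamma}\boldsymbol\phi,V(\div_{\Gamma}\boldsymbol\phi)\rangle$, gives
\[
\langle\boldsymbol\phi,(B-S)\boldsymbol\phi\rangle=\langle\boldsymbol\phi,V\boldsymbol\phi\rangle-\frac{1}{k^{2}}\langle\div_{\Gamma}\boldsymbol\phi,V(\div_{\Gamma}\boldsymbol\phi)\rangle .
\]
The two terms carry opposite signs, so no direct positivity is available; this sign indefiniteness is precisely the difficulty. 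To circumvent it I would introduce the Helmholtz--Hodge decomposition $\boldsymbol\phi=\nabla_{\Gamma}p+\mathbf{rot}_{\Gamma}q$ of $H^{-1/2}(\div,\Gamma)$. On the divergence-free part $\mathbf{rot}_{\Gamma}q$ the second term drops out and the coercivity (up to a compact remainder) of the scalar Helmholtz single layer $V$ yields control of $\|\mathbf{rot}_{\Gamma}q\|_{-1/2}$; on the gradient part $\nabla_{\Gamma}p$ one has $\div_{\Gamma}\boldsymbol\phi=\Delta_{\Gamma}p$, and to recover a definite sign I would test against the sign-reversed field $X\boldsymbol\phi=-\nabla_{\Gamma}p+\mathbf{rot}_{\Gamma}q$, so that both contributions add with the same sign. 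Tracking the cross terms between the two components and isolating the compact remainders then produces a (generalized) G\aa rding inequality, which is the content of the stated coercivity relation.

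Finally I would pass to the isomorphism. Continuity makes $(B-S)$ bounded, while the G\aa rding inequality identifies its principal part with a coercive operator plus a compact perturbation, so $(B-S)$ is Fredholm of index zero; it therefore suffices to prove injectivity. If $(B-S)\boldsymbol\phi=0$, the single-layer representation built from $\boldsymbol\phi$ solves the homogeneous Maxwell system in $\Omega_+$ with vanishing tangential trace and the outgoing radiation condition, so the Rellich lemma and the uniqueness argument of \Cref{SUC_theorem_1} force the field, and hence $\boldsymbol\phi$, to vanish. Injectivity together with the index-zero Fredholm property gives bijectivity, and the open mapping theorem upgrades the continuous bijection to the asserted isomorphism between $H^{-1/2}(\div,\Gamma)$ and $H^{-1/2}(\rot,\Gamma)$.
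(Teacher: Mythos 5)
The paper never proves this theorem: it is imported wholesale from Lange's thesis \cite{Lange-1995}, where it is established for a \emph{dissipative} medium, i.e.\ a wave number with nonvanishing imaginary part. Measured against the statement as written, your proposal has two genuine gaps. The first is in the coercivity step. You correctly identify the sign indefiniteness of
\begin{equation*}
\langle\boldsymbol\phi,(B-S)\boldsymbol\phi\rangle=\langle\boldsymbol\phi,V\boldsymbol\phi\rangle-\frac{1}{k^{2}}\langle\div_{\Gamma}\boldsymbol\phi,V(\div_{\Gamma}\boldsymbol\phi)\rangle,
\end{equation*}
but your remedy --- testing against the sign-flipped field $X\boldsymbol\phi=-\nabla_{\Gamma}p+\mathbf{rot}_{\Gamma}q$ --- estimates a \emph{different} quantity. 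What you sketch is a generalized G\aa rding inequality for $\Re\langle X\boldsymbol\phi,(B-S)\boldsymbol\phi\rangle$, and only modulo compact remainders; the theorem claims a lower bound on $\Re\langle\boldsymbol\phi,(B-S)\boldsymbol\phi\rangle$ with no defect term whatsoever. These are not the same statement, and no amount of bookkeeping of cross terms converts one into the other: for real $k$ the claimed bound is simply false, as your own sign analysis shows once made quantitative (on a sphere, take $\boldsymbol\phi=\nabla_{\Gamma}Y_n$; the negative term grows like $n^{3}$ against $n$ for the positive one, so the left side tends to $-\infty$ while $\|\boldsymbol\phi\|^{2}_{-1/2,\div_{\Gamma}}$ grows). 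The inequality as stated can only hold under the dissipation hypothesis of \cite{Lange-1995}, where the complex phase of $1/k^{2}$ and the damped kernel give both terms a common sign after a fixed rotation in the complex plane --- and in that setting the estimate is direct, needing neither the Hodge splitting nor the sign flip. So you must either import that hypothesis and prove the inequality as stated, or accept that you are proving a weaker theorem than the one quoted.

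The second gap is in the injectivity step of your Fredholm argument. From $(B-S)\boldsymbol\phi=0$ you conclude, via Rellich and exterior uniqueness, that the radiating field generated by the representation vanishes in $\Omega_{+}$ --- that much is fine --- and then that $\boldsymbol\phi=0$. But the same potential also defines a Maxwell solution \emph{inside} $\Omega$, and $\boldsymbol\phi$ is the jump of the tangential magnetic traces across $\Gamma$, not the exterior trace alone. The condition $(B-S)\boldsymbol\phi=0$ forces the tangential electric trace to vanish on both sides, so the interior field solves the interior perfect-conductor cavity problem; at real $k$ equal to an interior Maxwell eigenvalue this problem has nonzero solutions, producing nonzero densities in the kernel of $(B-S)$ --- the classical interior-resonance defect of the EFIE. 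Hence injectivity, and with it the isomorphism property, fails for those frequencies, and your concluding step is unjustified. This gap, like the first, is closed only by the dissipation assumption (or by excluding resonant frequencies), which is precisely the setting in which the cited thesis proves the theorem.
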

 \section{Obtaining of the higher order impedance boundary}\label{ApproxIBC}
Generally, we take the constant impedance operator, known as standard or Leontovitch impedance boundary conditions \cite{R-S, Senior-Volakis, Lange-1995}.
Here, we are interested in approximation of the impedance operator which depends on incident angle.
We consider a plane isotropic medium with a local orthogonal basis (x,y,z) on a tangent plane where a normal vector $\textbf{n}$ is in z-direction and (x,y)
is a tangent plane \cite{R-S,OMBS}. The exact impedance tensor is defined  obtained for the dielectric plane layer \cite{R-S} by:
\begin{equation}\label{Z_ex_mix}
 Z_{xy}(k_x, k_y) = Z_{yx}(k_x, k_y) = i \sqrt{\frac{\mu}{\epsilon}} \dfrac{k_x k_y}{k k_z} \tan[k_z d],
\end{equation}
\begin{equation}\label{Z_ex_TETM}
 Z_{xx}(k_x, k_y) = -i \sqrt{\frac{\mu}{\epsilon}} \dfrac{k^2_x k^2_z + k^2_y k^2}{k k_z (k^2_x + k^2_y)} \tan[k_z d],
\end{equation}
and
\begin{equation}
 Z_{yy}(k_x, k_y) = -i \sqrt{\frac{\mu}{\epsilon}} \dfrac{k^2_y k^2_z + k^2_x k^2}{k k_z (k^2_y + k^2_x)} \tan[k_z d],
\end{equation}
$Z$ is a impedance tensor of wave numbers ($k_x$, $k_y$), wave frequency and coating at each point of the surface.
 In \cite{R-S} a method based on a spectral domain approach is presented to derive high order impedance boundary condition. The spectral domain approach is applicable
to complex coatings and surface treatments as well as simple dielectric. So, the impedance boundary condition is approximated as a ratio of second order
polynomials for a coating invariant under rotation. Then they transform the polynomial approximation of the spectral domain boundary condition into the spatial domain using
elementary properties of the Fourier transform. Then they get
\begin{equation}\label{hoibc_r_s_x}
 (1 + b_1 \partial^2_x + b_2\partial^2_y)E_x + (b_1 - b_2)\partial^2_{xy}E_y = (a_1 - a_2)\partial^2_{xy}H_x - (a_0 + a_1\partial^2_x +
 a_2\partial^2_y)H_y,
\end{equation}
and
\begin{equation}\label{hoibc_r_s_y}
 (b_1 -b_2)\partial^2_{xy}E_x + (1 + b_2\partial^2_x + b_1\partial^2_y)E_y = (a_0 + a_2\partial^2_x + a_1\partial^2_y)H_x
 + (a_2 - a_1)\partial^2_{xy}H_y.
\end{equation}
where $E_{tg}=(E_x,E_y)$ and $H_{tg}=(H_x,H_y)$.
\subsection{ The impedance boundary condition form in 3D case }
%
%
We use formula (\ref{LD_LR}) to rewrite the equations (\ref{hoibc_r_s_x}) and (\ref{hoibc_r_s_y}) with operators $L_D$ and $L_R$ as following equations:
$$
 \mathbf{x}\cdot(\mathbf{E}_{tg} + b_1 L_D\mathbf{E}_{tg} - b_2 L_R\mathbf{E}_{tg}) = \mathbf{x}\cdot(a_0(\mathbf{n}\times\mathbf{H})
 + a_1 L_D(\mathbf{n}\times\mathbf{H}) - a_2 L_R(\mathbf{n}\times\mathbf{H})),
$$
and
$$
 \mathbf{y}\cdot(\mathbf{E}_{tg} + b_1 L_D\mathbf{E}_{tg} - b_2 L_R\mathbf{E}_{tg}) = \mathbf{y}\cdot(a_0(\mathbf{n}\times\mathbf{H})
 + a_1 L_D(\mathbf{n}\times\mathbf{H}) - a_2 L_R(\mathbf{n}\times\mathbf{H})).
$$
%
Then, we obtain the following approximation of the high order impedance boundary condition:
\begin{equation}\label{HOIBC3D}
 Z_{3D}: \ \  (I + b_1 L_D - b_2 L_R)\textbf{E}_{tg} = (a_0 I + a_1 L_D - a_2 L_R)(\mathbf{n} \times \textbf{H}).
\end{equation}

\subsection{ The impedance boundary condition form in 2D case }
%
Here we need to consider two different situations. In the first one, we consider case when the electric field is perpendicular to the incident plane, as shown on (\Cref{Polarizations} a). Incident, scattered and transverse electric fields are directed toward the viewer. The direction of magnetic field was chosen such that energy current has positive direction, i.e. direction of wave propagation. We call this case, transverse-electric (TE) polarization. 
In the second case electric fields are parallel to incident plane, as shown in (\Cref{Polarizations} b). In this case we call it transverse-magnetic (TM) polarization.
\begin{figure}[H]
\begin{minipage}{0.45\textwidth}
\begin{figure}[H]
\centering
        \includegraphics[scale=0.5]{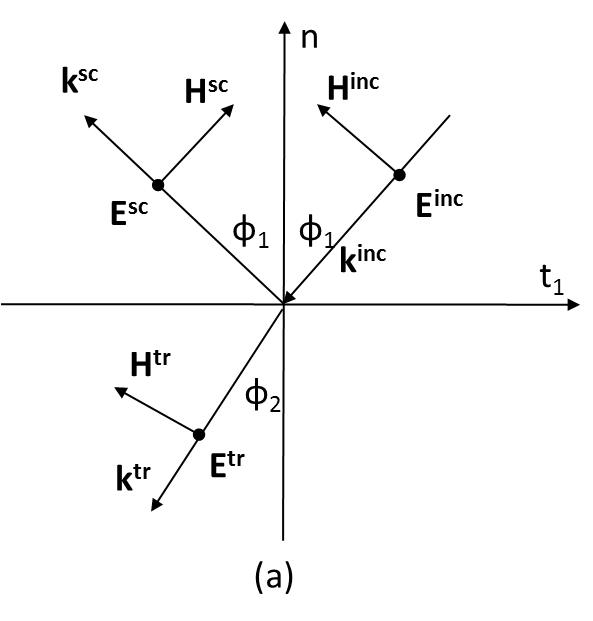}
    \end{figure}
\end{minipage}   
\begin{minipage}{0.45\textwidth}
\begin{figure}[H]
\centering
\includegraphics[scale=0.5]{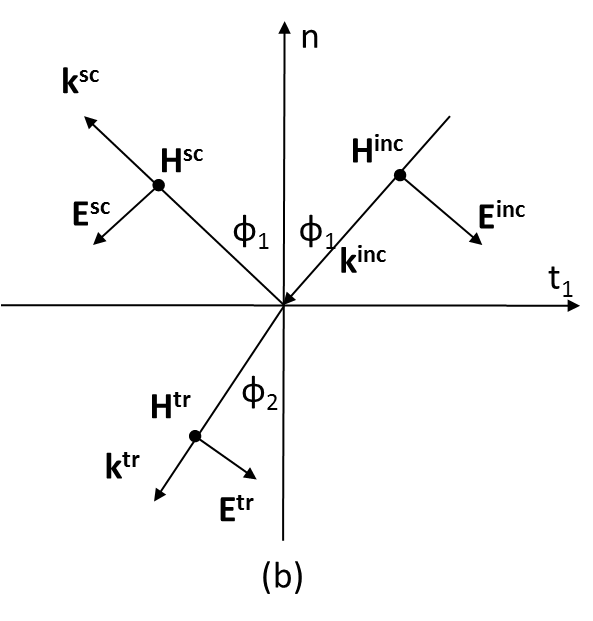}
\end{figure}
\end{minipage}
\caption{Reflection and refraction with (a) TE and (b) TM polarizations.}
 \label{Polarizations}
\end{figure}
%
%
%
We assume that the incident fields propagate perpendicular to the cylinder axis, so that $\partial / \partial y = 0$. 
These relations (\ref{hoibc_r_s_x}) and (\ref{hoibc_r_s_y}) are written in matrix form:
\begin{equation}\label{hoibc_2dmat}
 \left( \begin{array}{cc}
         1 + b_1 \partial^2_x & 0 \\
         0 & 1 + b_2\partial^2_x
        \end{array} \right)
 \left( \begin{array}{c}
         E_x \\ E_y
        \end{array} \right) =
 \left( \begin{array}{cc}
         a_0 + a_1 \partial^2_x & 0 \\
         0 & a_0 + a_2\partial^2_x
        \end{array} \right)
 \left( \begin{array}{c}
         -H_y \\ H_x
        \end{array} \right)
\end{equation}
%
%
%
%
%
%
%
%
%
We get first order approximation of impedance in two dimensional cases for each polarization
\begin{equation}\label{Z_2Dj_boxed}
Z_{2Dj}: \ \ (1 + b_j \partial^2_x)\mathbf{E}_{tg} = (a_0 + a_j \partial^2_x)\mathbf{n}\times\mathbf{H}
\end{equation}
where $j = 1,2$ correspond to TE and TM polarizations, respectively.
%
In the same manner,  the equation (\ref{Z_2Dj_boxed}) can be extended to second order polynomials in $\partial^2_x$. Also, we obtain the following approximation of the high order impedance boundary  in two dimensional:
\begin{equation}\label{i_eq}
\mathbf{E}_{tg} + b_j \partial^2_x \mathbf{E}_{tg} + b'_j \partial^4_x \mathbf{E}_{tg} = a_0 (\mathbf{n}\times\mathbf{H}) + a_j \partial^2_x (\mathbf{n}\times\mathbf{H}) + a'_j \partial^4_x (\mathbf{n}\times\mathbf{H})
\end{equation}
or it can be reduced to constant:
\begin{equation}\label{ii_eq}
\mathbf{E}_{tg} = a_0 (\mathbf{n}\times\mathbf{H}) .
\end{equation}
We will call the equation (\ref{Z_2Dj_boxed}) as first order IBC (IBC1). And we will call the equation (\ref{i_eq}) second order IBC (IBC2), the equation (\ref{ii_eq}) zeroth order IBC (IBC0), which is also known as Leontovich IBC.
%
\section{Study of Maxwell's equations with HOIBC}\label{section_SUC}
 We want to establish SUC for the solutions of Maxwell's equations associated with these IBCs, using of the uniqueness theorem mentioned earlier.
 We begin this section by  recalling the Maxwell's equations 
\begin{problem}\label{Max_Eq_SUC0}
\begin{equation}
\left\{
 \begin{array}{llll}
 \nabla \times(\nabla\times\textbf{E} )-k^2 \textbf{E}=0\,\,\,\,\,\text{in} \,\,\, \Omega_+,  \\
 \nabla \times(\nabla\times\textbf{H} )-k^2 \textbf{H}=0\,\,\,\,\,\text{in} \,\,\, \Omega_+,  \\
  \lim_{r\rightarrow\infty} r(\mathbf{E}\times\mathbf{n}_r + \mathbf{H}) = 0.
\end{array}
\right.
\end{equation}
\end{problem}
On the first time, we are going to study the problem \ref{Max_Eq_SUC0} associated with a more complicated boundary condition, that is called first order IBC1:
%
%
%
\begin{equation}\label{CM1}
 \mathbf{E}_{tg} + b_j L(\mathbf{E}_{tg}) = a_0 (\mathbf{n} \times \mathbf{H}) + a_j L(\mathbf{n} \times \mathbf{H}) \ \ on \ \Gamma,
\end{equation}
where $L$ is a complex differential operator, also known as \textbf{Hodge} operator. We assume that
\begin{equation}\label{CM1_CSB}
 \begin{cases}
 (\mathbf{n} \times \mathbf{H})\cdot\tau = (\mathbf{n} \times \mathbf{H})\cdot\nu = 0\ \  \text{on} \ \partial{\Gamma}\ \ \text{if} \ \ \Gamma\ \ \text{is}\ \ \text{open}, \\
 \mathbf{E}_{tg}\cdot\tau = \mathbf{E}_{tg}\cdot\nu = 0\ \ \text{on} \ \partial{\Gamma}\ \ \text{if}\ \ \Gamma\ \ \text{is}\ \ \text{open}. \\
 \end{cases}
\end{equation}
We have the following result
\begin{theorem}
The Maxwell problem (\ref{Max_Eq_SUC0}) with the boundary conditions (\ref{CM1}) and (\ref{CM1_CSB}) has a unique solution if the following relations are verified
 \begin{equation}\label{SUC_1}
  \begin{cases}
   \Im(\mu) \leq 0, \\
   \Im(\epsilon) \leq 0, \\
   a_j - b_j^* a_0 \neq 0, \\
   \Re (a_j - b_j^* a_0) = 0, \\
   \Im(a_0^* a_j)\Im(a_j - b_j^* a_0) \geq 0, \\
   \Im(b_j)\Im(a_j - b_j^* a_0) \geq 0.
  \end{cases}
 \end{equation}
\end{theorem}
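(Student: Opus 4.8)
The plan is to reduce everything to the sufficient uniqueness criterion already furnished by Theorem \ref{SUC_theorem_1}. Since the first two relations in (\ref{SUC_1}) are precisely the hypotheses $\Im(\mu)\le 0$ and $\Im(\epsilon)\le 0$ of that theorem, the only thing left is to verify the boundary-flux condition $\Re\left(k_0\int_{\Gamma}\mathbf{E}^*\cdot(\mathbf{n}\times\mathbf{H})\,ds\right)\ge 0$ for every field satisfying the homogeneous problem (\ref{Max_Eq_SUC0}) together with (\ref{CM1}) and (\ref{CM1_CSB}). Writing $\mathbf{u}=\mathbf{E}_{tg}$, $\mathbf{v}=\mathbf{n}\times\mathbf{H}$ and $S=\int_{\Gamma}\mathbf{u}^*\cdot\mathbf{v}\,ds$ (recall $\mathbf{v}$ is tangential, so $\mathbf{E}^*\cdot\mathbf{v}=\mathbf{u}^*\cdot\mathbf{v}$), the whole task becomes showing $\Re(S)\ge 0$.

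First I would test the boundary relation (\ref{CM1}) against $\mathbf{v}$ and against $\mathbf{u}$ in the duality pairing on $\Gamma$. The edge conditions (\ref{CM1_CSB}) are exactly what is needed to kill the boundary contributions when integrating by parts, so that $L$ acts as a formally self-adjoint operator; consequently $V_0=\langle\mathbf{v},\mathbf{v}\rangle\ge 0$, $U_0=\langle\mathbf{u},\mathbf{u}\rangle\ge 0$, $V_1=\langle L\mathbf{v},\mathbf{v}\rangle$, $U_1=\langle L\mathbf{u},\mathbf{u}\rangle$ are all real, while the mixed term $W=\langle L\mathbf{u},\mathbf{v}\rangle=\langle\mathbf{u},L\mathbf{v}\rangle$ is symmetric. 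Sesquilinearity then yields the two scalar identities
\begin{equation*}
S+b_j^* W = a_0^* V_0 + a_j^* V_1, \qquad U_0 + b_j U_1 = a_0 S + a_j W .
\end{equation*}

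Next I would eliminate the unknown cross term $W$ between these relations, which after a short computation gives
\begin{equation*}
(a_j - b_j^* a_0)\,S = a_0^* a_j\,V_0 + |a_j|^2 V_1 - b_j^* U_0 - |b_j|^2 U_1 .
\end{equation*}
Here the relation $a_j-b_j^* a_0\neq 0$ permits division, and $\Re(a_j-b_j^*a_0)=0$ is the crucial simplifier: writing $a_j-b_j^*a_0=i\,\Im(a_j-b_j^*a_0)$, the purely real terms $|a_j|^2V_1$ and $|b_j|^2U_1$ disappear from the real part, leaving
\begin{equation*}
\Re(S)=\frac{V_0\,\Im(a_0^* a_j)+U_0\,\Im(b_j)}{\Im(a_j-b_j^* a_0)} .
\end{equation*}
Since $V_0,U_0\ge 0$ and may take arbitrary nonnegative values over admissible fields, the remaining relations $\Im(a_0^*a_j)\,\Im(a_j-b_j^*a_0)\ge 0$ and $\Im(b_j)\,\Im(a_j-b_j^*a_0)\ge 0$ force each term of the numerator to carry the sign of the denominator, whence $\Re(S)\ge 0$. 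As $k_0>0$, the flux condition of Theorem \ref{SUC_theorem_1} holds and uniqueness follows.

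The main obstacle I anticipate is the rigorous justification of the self-adjointness of the Hodge operator $L$ on a possibly open surface $\Gamma$: one must integrate $L_D=\nabla_{\Gamma}\div_{\Gamma}$ and $L_R=\mathbf{rot}_{\Gamma}\rot_{\Gamma}$ by parts in the correct trace spaces $H^{-1/2}(\div,\Gamma)$ and $H^{-1/2}(\rot,\Gamma)$ and confirm that the edge terms genuinely vanish under (\ref{CM1_CSB}); the subsequent algebra of eliminating $W$ and taking the real part is routine by comparison. I would also dispose of the degenerate case $b_j=0$ separately, where the boundary relation collapses to a Leontovich-type impedance and the argument is simpler.
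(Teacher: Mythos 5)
Your proposal is correct in substance, but it necessarily takes a different route from the paper, because the paper's entire proof of this theorem is the single line ``See \cite{Stupfel_JUNE_2011}'' --- no argument is given at all. What you have done is reconstruct, self-contained, the argument the paper delegates to that reference, and your reduction is exactly the mechanism the paper itself alludes to just after the SUC theorems, namely that everything rests on Theorem \ref{SUC_theorem_1} and its flux condition $\Re\left(k_0\int_{\Gamma}\mathbf{E}^*\cdot(\mathbf{n}\times\mathbf{H})\,ds\right)\geq 0$. Your algebra checks out: testing (\ref{CM1}) and its complex conjugate against $\mathbf{u}=\mathbf{E}_{tg}$ and $\mathbf{v}=\mathbf{n}\times\mathbf{H}$, using the reality and formal self-adjointness of $L$ granted by the edge conditions (\ref{CM1_CSB}), eliminating $W$ (note this needs no division by $a_j$, nor a separate case $b_j=0$: multiply the first identity by $a_j$ and substitute $a_jW$ from the second; the hypothesis $a_j-b_j^*a_0\neq 0$ excludes the only truly degenerate case $a_j=b_j=0$), and then extracting the imaginary part of $(a_j-b_j^*a_0)S=a_0^*a_jV_0+|a_j|^2V_1-b_j^*U_0-|b_j|^2U_1$ indeed gives $\Re(S)=\left[\Im(a_0^*a_j)V_0+\Im(b_j)U_0\right]/\Im(a_j-b_j^*a_0)\geq 0$ under (\ref{SUC_1}), for either sign of the denominator, since $V_0,U_0\geq 0$ (the remark that they ``may take arbitrary values'' is not needed). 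What the paper's citation buys is rigor on precisely the one point you flag yourself: the functional setting in which $L=L_D-L_R$ is self-adjoint with vanishing edge contributions under (\ref{CM1_CSB}); that is the only non-routine ingredient, and it is handled in \cite{Stupfel_JUNE_2011}. What your version buys is transparency: it shows exactly how each of the six conditions in (\ref{SUC_1}) enters (the first two feed Theorem \ref{SUC_theorem_1} directly, the third and fourth make the prefactor of $S$ purely imaginary and nonzero, and the last two fix the sign of each term in the numerator), which the paper leaves entirely implicit.
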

\begin{proof}
See \cite{Stupfel_JUNE_2011}.
\end{proof}

On the other hand, we study yet another more complicated HOIBC, which is called second-order IBC that increases power of Hodge operator.
%
%
%
For the SUC of the problem (\ref{Max_Eq_SUC0}) with IBC2, we assume that
\begin{equation}\label{CM2}
 \mathbf{E}_{tg} + b_j L(\mathbf{E}_{tg}) + b'_j L^2(\mathbf{E}_{tg}) = a_0 (\mathbf{n} \times \mathbf{H}) + a_j L(\mathbf{n} \times \mathbf{H}) + a'_j L^2 (\mathbf{n} \times \mathbf{H}) \ \ on \ \Gamma ,
\end{equation}
where $L^2(\cdot) = L \circ L (\cdot)$. And all coefficients $a_j, a'_j, b_j$ and $b'_j$ are defined locally and depend on incident angle. With a next conditions on a bound
\begin{equation}\label{CM2_CSB}
\begin{cases}
 (\mathbf{n} \times \mathbf{H})\cdot\tau = (\mathbf{n} \times \mathbf{H})\cdot\nu = 0\ \ on\ \partial{\Gamma}\ \ if\ \ \Gamma\ \ is\ \ open, \\
 \mathbf{E}_{tg}\cdot\tau = \mathbf{E}_{tg}\cdot\nu = 0\ \ on\ \partial{\Gamma}\ \ if\ \ \Gamma\ \ is\ \ open \\
 X_1\cdot\nu = X_2\cdot\nu = Y_1\cdot\nu = Y_2\cdot\nu = 0\ \ on\ \partial{\Gamma}\ \ if\ \ \Gamma\ \ is\ \ open \\
 X_1\cdot\tau = X_2\cdot\tau = Y_1\cdot\tau = Y_2\cdot\tau = 0\ \ on\ \partial{\Gamma}\ \ if\ \ \Gamma\ \ is\ \ open.
 \end{cases}
\end{equation}
where
\begin{equation}\label{CM2_XY}
 \begin{cases}
 X_1 = \nabla_{\Gamma} \div_{\Gamma} \mathbf{E}_{tg} = L_D \mathbf{E}_{tg} \\
 X_2 = \nabla_{\Gamma} \div_{\Gamma} (\mathbf{n} \times \mathbf{H}) = L_D (\mathbf{n} \times \mathbf{H}) \\
 Y_1 = \mathbf{rot}(\mathbf{n} (\rot_{\Gamma} \mathbf{E}_{tg}) ) = L_R \mathbf{E}_{tg}\\
 Y_2 = \mathbf{rot}(\mathbf{n} (\rot_{\Gamma} (\mathbf{n} \times \mathbf{H})) ) = L_R (\mathbf{n} \times \mathbf{H})
 \end{cases}
\end{equation}

So for the IBC2 we have following uniqueness theorem

\begin{theorem}
The Maxwell problem (\ref{Max_Eq_SUC0}) with a boundary condition (\ref{CM2}), (\ref{CM2_CSB}) and (\ref{CM2_XY}) admits a unique solution if next relations are verified
\begin{equation}
  \begin{cases}
   \Im(\mu) \leq 0, \\
   \Im(\epsilon) \leq 0, \\
   \Delta \neq 0, \\
   \Re[\Delta^* (a_j b'^*_j - a'_j b_j^* )] =0,\\
   \Re[\Delta^* (a'_j  - a_0 b'^*_j )] = 0,\\
   \alpha\Im{b'_j} + \beta\Im(b_j b'^*_j) \leq 0,\\
   \alpha \Im(a'_j a_0^*) - \beta \Im(a'_j a_j^*) \leq 0,\\
   -\alpha\Im{b_j} + \beta\Im{b'_j} \leq 0,\\
   \alpha\Im(a_j a_0^*) - \beta\Im(a'_j a_0^*) \geq 0
  \end{cases}
\end{equation}
with
$$ \Delta = (a_0 b_j^* - a_j)(a_j b'^*_j - a'_j b_j^*) - (a'_j - a_0 b'^*_j)^2 $$
$$\alpha = \Im[\Delta^* (a_j b'^*_j - a'_j b_j^*)] $$
$$\beta =\Im[\Delta^* (a_0 b'^*_j - a'_j)] $$
\end{theorem}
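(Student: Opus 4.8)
The plan is to obtain uniqueness as a corollary of the general criterion already established in Theorem \ref{SUC_theorem_1}, exactly as the first-order case (\ref{CM1}) was handled. Passing to the difference of two solutions, it suffices to show that the only radiating field satisfying the homogeneous system (\ref{Max_Eq_SUC0}) together with the boundary relations (\ref{CM2}), (\ref{CM2_CSB}), (\ref{CM2_XY}) is the trivial one. The first two hypotheses $\Im(\mu)\le 0$ and $\Im(\epsilon)\le 0$ are precisely the medium conditions required in Theorem \ref{SUC_theorem_1}, so the entire argument reduces to verifying the surface sign condition
\[
\Re\!\left(k_0\int_{\Gamma}\mathbf{E}^*\cdot(\mathbf{n}\times\mathbf{H})\,ds\right)\ge 0 .
\]
Once this is in place, Theorem \ref{SUC_theorem_1}, and ultimately Rellich's lemma, forces $\mathbf{E}=\mathbf{H}=0$ in $\Omega_+$.

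The second step is to exploit the structure of the Hodge operator. Writing $\mathbf{m}=\mathbf{n}\times\mathbf{H}$, which is tangential, we have $\mathbf{E}^*\cdot\mathbf{m}=\mathbf{E}_{tg}^*\cdot\mathbf{m}$, so I would integrate the relation (\ref{CM2}) against $\mathbf{m}^*$ on $\Gamma$ and integrate by parts. By the definitions (\ref{LD_LR}) of $L_D=\nabla_{\Gamma}\div_{\Gamma}$ and $L_R=\mathbf{rot}_{\Gamma}(\rot_{\Gamma}\cdot)$, the quadratic forms satisfy $\langle L_D\mathbf{A},\mathbf{A}\rangle=-\|\div_{\Gamma}\mathbf{A}\|^2$ and $\langle L_R\mathbf{A},\mathbf{A}\rangle=-\|\rot_{\Gamma}\mathbf{A}\|^2$, with all boundary contributions on $\partial\Gamma$ killed by (\ref{CM2_CSB})--(\ref{CM2_XY}) (and no boundary at all when $\Gamma$ is closed). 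Hence any polynomial in $L$ acts as a self-adjoint operator with real, sign-definite spectrum, and a Hodge decomposition of $\mathbf{E}_{tg}$ and $\mathbf{m}$ diagonalizes (\ref{CM2}) simultaneously: on each mode with real eigenvalue $\lambda$ the relation collapses to the scalar identity
\[
(1+b_j\lambda+b'_j\lambda^2)\,E=(a_0+a_j\lambda+a'_j\lambda^2)\,m .
\]

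The surface integral then becomes a positive superposition over modes of the quantities $E^*m$, and eliminating $E$ yields
\[
E^*m=\frac{(a_0+a_j\lambda+a'_j\lambda^2)^*}{(1+b_j\lambda+b'_j\lambda^2)^*}\,|m|^2 ,
\]
so the required sign condition holds for every admissible field if and only if
\[
\Re\!\left[k_0\,(a_0+a_j\lambda+a'_j\lambda^2)^*(1+b_j\lambda+b'_j\lambda^2)\right]\ge 0
\]
for all $\lambda$ in the spectral range of $L$. This is a positive-real, Herglotz-type requirement on a polynomial of degree four in the real variable $\lambda$. Expanding and collecting powers of $\lambda$, the demand that this real quartic keep a constant sign on the relevant half-line translates into sign conditions on its coefficients; these are exactly the six relations in the statement, with $\Delta=(a_0b_j^*-a_j)(a_jb_j'^*-a_j'b_j^*)-(a_j'-a_0b_j'^*)^2$ acting as the discriminant-like quantity built from the three natural combinations $(a_0b_j^*-a_j)$, $(a_jb_j'^*-a_j'b_j^*)$, $(a_j'-a_0b_j'^*)$, and $\alpha,\beta$ the imaginary parts controlling the intermediate coefficients. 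The two equations $\Re[\Delta^*(\cdots)]=0$ enforce that, after normalizing by $\Delta^*$, the odd-degree coefficients are purely imaginary so that the expression is genuinely real, while the four inequalities guarantee its nonnegativity.

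I expect the last algebraic step to be the main obstacle. Reducing to modes and invoking Theorem \ref{SUC_theorem_1} is routine, but showing that constant-sign positivity of the degree-four polynomial over the half-line is captured \emph{precisely} by the displayed conditions requires careful coefficient-by-coefficient matching, correct handling of the case where the denominator $1+b_j\lambda+b'_j\lambda^2$ vanishes, and attention to the distinct spectral ranges arising for the TE and TM polarizations ($j=1,2$). The bookkeeping that produces $\Delta$, $\alpha$ and $\beta$ and ties them to the four sign inequalities is where the genuine work lies.
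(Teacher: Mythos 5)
Your overall frame---pass to the difference of two solutions, invoke Theorem \ref{SUC_theorem_1} so that everything reduces to the surface sign condition $\Re\left(k_0\int_\Gamma \mathbf{E}^*\cdot(\mathbf{n}\times\mathbf{H})\,ds\right)\geq 0$, then exploit the Hodge structure of $L$---is the right one, and it is consistent with the reference \cite{Stupfel_JUNE_2011}, which is all the paper itself offers as proof. But your proposal stops exactly where the content of the theorem begins. The assertion that nonnegativity of the real quartic $\Re\left[(a_0+a_j\lambda+a'_j\lambda^2)^*(1+b_j\lambda+b'_j\lambda^2)\right]$ on the spectral half-line ``translates into'' the displayed conditions (you say six relations; there are nine) is never derived, and it cannot be obtained by the coefficient-by-coefficient matching you describe. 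Expanding that quartic in the real variable $\lambda$ gives coefficients $\Re(a_0)$, $\Re(a_j^*+a_0^*b_j)$, $\Re(a_j'^*+a_j^*b_j+a_0^*b_j')$, $\Re(a_j'^*b_j+a_j^*b_j')$, $\Re(a_j'^*b_j')$, none of which are the quantities in the statement. The quantities that do appear---$(a_0b_j^*-a_j)$, $(a_jb_j'^*-a_j'b_j^*)$, $(a_j'-a_0b_j'^*)$, their combination $\Delta$, and $\alpha,\beta$---are sesquilinear analogues of the $2\times 2$ minors of the coefficient array of (\ref{CM2}), and they arise when one \emph{inverts} the linear relation between the Hodge components of $\mathbf{E}_{tg}$ and $\mathbf{n}\times\mathbf{H}$, i.e.\ when one eliminates the auxiliary fields $X_i,Y_i$ of (\ref{CM2_XY}) from integral identities obtained by pairing (\ref{CM2}) against suitably conjugated fields: the two equality constraints kill cross terms, and the four inequalities make the surviving quadratic form nonnegative. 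Without carrying out that elimination---or, alternatively, without an honest proof that the stated conditions imply your quartic-positivity criterion---there is no proof: the decisive implication is precisely what you defer as ``bookkeeping.''

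There are also two structural obstructions to the specific route you chose. First, the mode-by-mode diagonalization needs an orthogonal eigenbasis of $L$ common to both tangential fields, i.e.\ essentially a closed smooth $\Gamma$ with discrete Laplace--Beltrami spectrum; the theorem, however, explicitly covers open $\Gamma$ with the edge conditions (\ref{CM2_CSB}), where your spectral expansion is not available as stated, whereas an integration-by-parts argument is exactly what (\ref{CM2_CSB}) and (\ref{CM2_XY}) are tailored for. Second, your reduction divides by $1+b_j\lambda+b'_j\lambda^2$ at each eigenvalue, yet the hypotheses of the theorem contain no condition preventing this quantity from vanishing; the nondegeneracy the statement actually assumes is $\Delta\neq 0$, which is the invertibility of the eliminated $2\times 2$ system, not of your denominator. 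This mismatch is further evidence that the stated conditions do not come from the pointwise spectral argument you propose, and that following your plan to the end would not reproduce the theorem as stated.
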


\begin{proof}
see \cite{Stupfel_JUNE_2011}.
\end{proof}
All these three theorems are based on the theorem \ref{SUC_theorem_1} and its condition $$\Re (k_0 \int_{\Gamma}\mathbf{E}^* \cdot (\mathbf{n} \times \mathbf{H}) ds) \geq 0.$$
 So, to verify this condition, coefficients of HOIBC should verify conditions mentioned in these theorems.
\section{Calculation of the coefficients}
\subsection{The coefficients of the IBC1}
Now, we propose some different methods to calculate these coefficients $(a_0, a_j, b_j)$. The impedance $Z_{2Dj}$ (\ref{Z_2Dj_boxed}) is the following rational function of $k^2_x$
\begin{equation}\label{1.55}
Z_{2Dj} = \frac{a_0 - a_j k^2_x}{1 - b_j k^2_x}, \ \ j=1,2
\end{equation}
The coefficients indicated by $j = 1,2$ correspond to polarizations TE and TM respectively.
From (\ref{Z_ex_mix}) and (\ref{Z_ex_TETM}) we can express exact impedance for TE and TM polarization:
\begin{equation}\label{Exact_Imp_TE}
 Z^{ex}_{TE}=\sqrt{\frac{\mu}{\epsilon}}\frac{k_z}{k}\tan{(k_z d)} = z_0 \sqrt{\mu_r \epsilon_r - \left(\frac{k_x}{k_0}\right)^2} \tan{\left(\sqrt{\mu_r \epsilon_r - \left(\frac{k_x}{k_0}\right)^2}k_0 d\right)} / \epsilon_r
\end{equation}
\begin{equation}\label{Exact_Imp_TM}
 Z^{ex}_{TM}=\sqrt{\frac{\mu}{\epsilon}}\frac{k}{k_z}\tan{(k_z d)} = \frac{z_0 \mu_r  \tan{\left(\sqrt{\mu_r \epsilon_r - \left(\frac{k_x}{k_0}\right)^2}k_0 d\right)} }{ \sqrt{\mu_r \epsilon_r - \left(\frac{k_x}{k_0}\right)^2} }
\end{equation}
\begin{itemize}
\item \textbf{First method}\\
The accuracy of the boundary condition may be improved by including the coefficient $a_1$, thus approximation derive first order polynomial form \cite{R-S}. Wave formulas of electromagnetic fields give the quantity of wave number $x$-component, at calculation coefficients of approximate polynomials. Where from we denote $\xi = -(\frac{k_x}{k_0})^2 = -\sin^2(\theta)$  and input it in the exact impedance values (\ref{Exact_Imp_TE}), (\ref{Exact_Imp_TM}), we also obtain

%
$$Z^{ex}_{TE}(\xi) = z_0 \sqrt{\mu_r \epsilon_r + \xi } \tan{\left(\sqrt{\mu_r \epsilon_r + \xi}k_0 d\right)} / \epsilon_r$$
$$Z^{ex}_{TM}(\xi) = \frac{z_0 \mu_r  \tan{\left(\sqrt{\mu_r \epsilon_r +\xi } k_0 d\right)} }{ \sqrt{\mu_r \epsilon_r + \xi } } .$$
Here we approximate impedance as ratio of polynomials of $\xi$.

If $\theta = 0$, we get that $a_0 = Z(0) = Z^{ex}(0)$, for a normally incident wave, which is known as the Leontovich boundary condition and we get
$$a_0 = \sqrt{\frac{\mu_0\mu_r}{\epsilon_0\epsilon_r}} \tan{\left( \omega\sqrt{\mu_0\mu_r\epsilon_0\epsilon_r} d \right)} $$
We calculate other coefficients $a_j$ and $b_j$, using two arbitrary angles $\theta_1$ and $\theta_2$ by:
$$ \left( 
\begin{array}{cl} a_j \\ b_j
\end{array} \right) = \left[ 
\begin{array}{clcr}
-k^2_x(\theta_1) &  k^2_x Z_j^{ex}(\theta_1)\\
-k^2_x(\theta_2) &  k^2_x Z_j^{ex}(\theta_2)
\end{array} \right]^{-1}
\left( 
\begin{array}{cl} Z_j^{ex}(\theta_1) - a_0 \\ Z_j^{ex}(\theta_2) - a_0
\end{array} \right)
$$
The indices correspond to TE and TM polarizations, as in (\ref{1.55}).
%
%
%
%
%
Therefore, $a_j$ is defined for arbitrary $\theta_1 \in ]0, \pi/2 [$
$$a_j = \frac{Z^{ex}_j(\theta_1) - a_0}{\xi(\theta_1)} \ \ \text{and} \ \ b_j = 0 .$$

We can approximate impedance $Z^{ex}$ as the first order Taylor polynomial for $\xi$ near to zero, which yields
$$a_1 = z_0\frac{k_0 d}{2 \epsilon_r } + z_0\frac{ \tan (\sqrt{\mu_r\epsilon_r} k_0 d)}{2 \sqrt{\mu_r\epsilon_r} \epsilon_r} + z_0\frac{k_0 d \tan^2(\sqrt{\mu_r\epsilon_r} k_0 d)}{2 \epsilon_r} \ \ \ (Taylor)$$
$$b_1 = 0 .$$

Now we suppose that $b_1$ is different to zero and we approximate the impedance as ratio of polynomials. Stupfel mentioned Pad\'e approximation, which considers second order Taylor approximation as follows
\begin{equation}\label{Pade}
Z^{ex}_1(\xi) = c_0 + c_1\xi + c_2\xi^2 + O(\xi^2) \approx \frac{a_0 + a_1\xi}{1 + b_1\xi} ,
\end{equation}
where coefficients $c_0, c_1$ and $c_2$ are Taylor coefficients:
$$c_0 = z_0\sqrt{\frac{\mu_r}{\epsilon_r}} \tan{\left( \sqrt{\mu_r\epsilon_r} k_0 d \right)} $$
$$c_1 = z_0\frac{k_0 d}{2 \epsilon_r } + z_0\frac{ \tan (\sqrt{\mu_r\epsilon_r} k_0 d)}{2 \sqrt{\mu_r\epsilon_r} \epsilon_r} + z_0\frac{k_0 d \tan^2(\sqrt{\mu_r\epsilon_r} k_0 d)}{2 \epsilon_r} $$
$$c_2 = \frac{z_0 k_0 d}{8 \epsilon^2_r \mu_r} + \left( \frac{z_0 k^2_0 d^2}{4 \epsilon_r \sqrt{\epsilon_r \mu_r} } - \frac{z_0}{8 \epsilon_r (\epsilon_r \mu_r)^{3/2}} \right)\tan(\sqrt{\epsilon_r \mu_r} k_0 d) $$
$$+ \frac{z_0 k_0 d}{8 \epsilon^2_r \mu_r} \tan^2(\sqrt{\epsilon_r \mu_r} k_0 d) + \frac{z_0 k^2_0 d^2}{2 \epsilon_r \sqrt{\epsilon_r \mu_r}} \tan^3(\sqrt{\epsilon_r \mu_r} k_0 d) .$$
By multiplying this Taylor polynomial by denominator polynomial and equating the product to the numerator polynomial, we derive equations, where coefficients of Pade approximation may be determined through coefficients of Taylor approximation. So, from (\ref{Pade}) we have
$$a_0 + a_1\xi \approx c_0 + (c_1 + c_0 b_1)\xi + (c_2 + c_1 b_1)\xi^2 + O(\xi^2) ,$$
%
%
%
%
%
Finally, we get the coefficients of Padé approximation
$$b_1 = - \frac{c_2}{c_1}, \,\,\,\,\,\, a_1 = c_1 - c_0 \frac{c_2}{c_1} .$$
\item  \textbf{Second method}\\
We present another method to calculate the coefficients. For different $\theta_1$, $\theta_2$ from $] 0, \pi/2[$, we get different values of $\xi_i = - k^2_0 \sin^2(\theta_i)$ and two linear equations \cite{R-S}
$$Z_1^{ex}(\xi_1) - b_1 \xi_1 Z_1^{ex}(\xi_1) - a_0 + a_1 \xi_1 = 0 $$
$$Z_1^{ex}(\xi_2) - b_1 \xi_2 Z_1^{ex}(\xi_2) - a_0 + a_1 \xi_2 = 0 .$$
That can be solved in matrix form as follows
$$ \left( 
\begin{array}{cl} a_j \\ b_j
\end{array} \right) = \left[ 
\begin{array}{clcr}
\xi_1 &  -\xi_1 Z_j^{ex}(\xi_1)\\
\xi_2 &  -\xi_2 Z_j^{ex}(\xi_2)
\end{array} \right]^{-1}
\left( 
\begin{array}{cl} Z_j^{ex}(\xi_1) - a_0 \\ Z_j^{ex}(\xi_2) - a_0
\end{array} \right)
$$
and we get the coefficients of so-called Collocation approximation \cite{Stupfel_MARCH_2000}
$$a_1 = \frac{-\xi_2 Z_j^{ex}(\xi_2) (Z_j^{ex}(\xi_2) - a_0) + \xi_1 Z_j^{ex}(\xi_1)(Z_j^{ex}(\xi_2) - a_0 )}{\xi_1\xi_2(Z_j^{ex}(\xi_1) - Z_j^{ex}(\xi_2))} $$
$$b_1 = \frac{ -\xi_2 (Z_j^{ex}(\xi_1) - a_0) + \xi_1 (Z_j^{ex}(\xi_2) - a_0 ) }{\xi_1\xi_2(Z_j^{ex}(\xi_1) - Z_j^{ex}(\xi_2))} .$$
\end{itemize}

Note that in two dimensional case we suppose that $\partial_y = 0$. The three dimensional impedance $Z_{3D}$ should correspond to $Z_{2Dj}$ in two dimensional case. Coefficients $a_1, b_1$ and $a_2, b_2$ should correspond to those in two dimension, TE and TM coefficients, respectively.
\subsection{The coefficients of the IBC2}
We can apply all methods that were proposed earlier for IBC2, too. We get Taylor approximation, if we admit that $b_j$ and $b'_j$ are zero. If they are not zero, we get ratio of polynomials approximation. Pad\'e approximation uses fourth order Taylor expansion. Whence we get five equation for five unknown coefficients. And in the last method, the coefficients are calculated by solving system of linear equations for different $\theta_k \in ]0, \pi/2[,\ k = 1,2,3,4$
$$\left( 
\begin{array}{cccc} a_j \\ a'_j \\ b_j \\ b'_j
\end{array} \right) = \left[ 
\begin{array}{cccc}
\xi_1 & \xi^2_1 & -\xi_1 Z_j^{ex}(\xi_1) & -\xi^2_1 Z_j^{ex}(\xi_1)^2 \\
 ... & ... & ... & ... \\
\xi_4 & \xi^2_4 & -\xi_4 Z_j^{ex}(\xi_4) & -\xi^2_4 Z_j^{ex}(\xi_4)^2 \\
\end{array} \right]^{-1}
\left( 
\begin{array}{cl} Z_j^{ex}(\xi_1) - a_0\\ ... \\ ... \\ Z_j^{ex}(\xi_4) - a_0
\end{array} \right)
$$
Here, we suppose that in the last two methods we get invertible matrices.
%
%
%
\subsection{Numerical results}
Let us consider a mono-layer dielectric coating with characteristics $\varepsilon_r = 4.0$, $\mu_r = 1.0$ and $d = 0.005\lambda_0$. \Cref{TE_TM} shows values exact IBC, Leontovich IBC, first order and second order impedance boundary conditions, in TE polarization and TM polarization. Where the angle of incidence of the plane wave $\phi$ has angle range $]0, \pi[$.



\begin{figure}[H]
\begin{minipage}{0.45\textwidth}
\begin{figure}[H]
\centering
\includegraphics[scale=0.48]{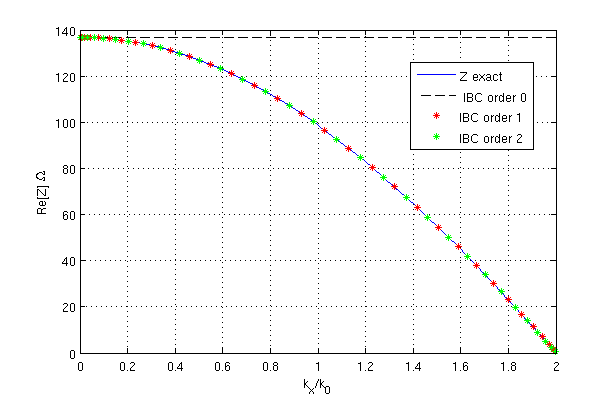}
 \subcaption{TE polarisation}
 \label{fig_ZTE}
\end{figure}
\end{minipage}   
\begin{minipage}{0.45\textwidth}
\begin{figure}[H]
\centering
\includegraphics[scale=0.48]{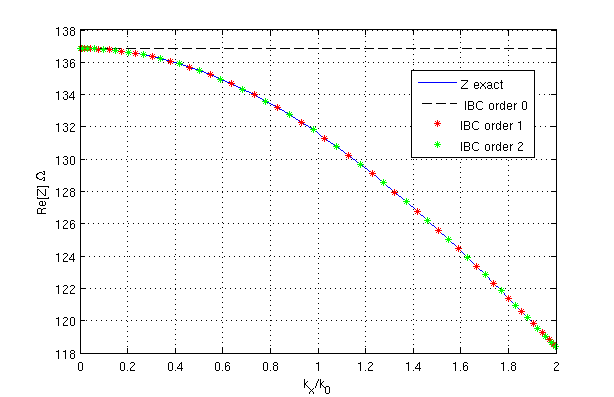}
\subcaption{TM polarisation}
\label{fig_ZTM}
\end{figure}
\end{minipage}
\caption{Comparison of the exact impedance, Leontovich impedance, first-order and second-order IBC.}
\label{TE_TM}
\end{figure}
We can easily see that the difference between IBC0 and exact IBC increases. While the difference between exact IBC and IBC1 is very small, as the difference between exact IBC and IBC2. But we can see the error of IBC1 and IBC2 approximations on the  \Cref{err_TE_TM}. As the angle of incidence increases the error of first-order IBC approximation reaches $0.39 \Omega$.
\begin{figure}[H]
\begin{minipage}{0.45\textwidth}
\begin{figure}[H]
\centering
\includegraphics[scale=0.48]{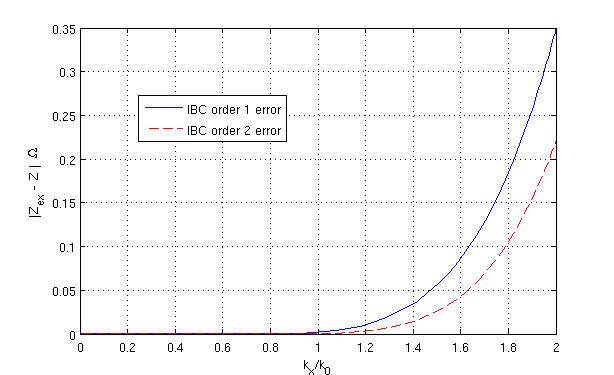}
\subcaption{TE polarisation}
\label{fig_difZTE}
\end{figure}
\end{minipage}   
\begin{minipage}{0.45\textwidth}
\begin{figure}[H]
\centering
\includegraphics[scale=0.48]{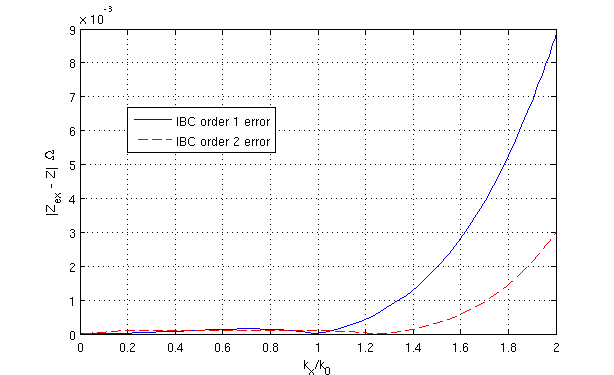}
\subcaption{TM polarisation}
\label{fig_difZTM}
\end{figure}
\end{minipage}
 \caption{Errors of first-order (IBC1) and second-order (IBC2) IBC.}
 \label{err_TE_TM}
\end{figure} 
In the next sections, we want to establish some  variational formulations, applying a boundary integral method for the problem in 2D case  with  different IBCs.
Then, we introduce current densities $\textbf{J} $ and $\textbf{M}$ on the boundary $\Gamma$ as follows: 
$$\mathbf{M} = \mathbf{E} \times \mathbf{n} \ \ , \ \mathbf{J} = \mathbf{n} \times \mathbf{H}.$$
Thanks to the Stratton-Chu formula, 
we know that $J$ and $M$ are solutions of EFIE and MFIE, as\cite{BEM}:
\begin{equation}{\label{EFIE}}
<Z_0(B-S) \mathbf{J}, \boldsymbol{\Psi}_J> + <(P+Q)\mathbf{M}, \boldsymbol{\Psi}_J> = <E^{inc}, \boldsymbol{\Psi}_J>,
\end{equation}
\begin{equation}{\label{MFIE}}
-<(P+Q)\mathbf{J}, \boldsymbol{\Psi}_M>  + <\frac{1}{Z_0}(B-S)\mathbf{M}, \boldsymbol{\Psi}_M> = <H^{inc}, \boldsymbol{\Psi}_M>
\end{equation}
\section{Variational formulation for the 2D problem}
%
%
Two dimensional case system is invariant in one direction, In the two dimensional case, 
so object surface $\Gamma$ becomes a curved contour, that we will call $C$. We have the curvilinear abscissa $l$ along $C$ and normal to the contour unit vector $\mathbf{n}$. We set the local frame ($\boldsymbol{\tau},\ \boldsymbol{\nu},\ \mathbf{n}$), where $\boldsymbol{\tau}$ is a unit vector tangent to the contour $C$ in $l$ direction, and $\boldsymbol{\nu}$ can be defined as $\boldsymbol{\nu} = \mathbf{n} \times \boldsymbol{\tau}$. We suppose that our two dimensional system does not depend on $\nu$ parameter, however variable $\nu$ component is depend on $l$. 
%
%
%
%
%
%


%
So, we write the first order IBC (\ref{Z_2Dj_boxed}) as follows  
\begin{equation}\label{2D_ibc1_eq}
(1 + b_j d_l^2)(\mathbf{n} \times \mathbf{M} ) = (a_0 + a_j d_l^2)\mathbf{J},
\end{equation}
%
and we can write the second order IBC (\ref{i_eq}) as follows
\begin{equation}\label{2D_ibc2_eq}
(1 + b_j d_l^2  + b'_j d_l^4) (\mathbf{n} \times \mathbf{M}) = (a_0 + a_j d_l^2 + a'_j d_l^4 ) \mathbf{J}, 
\end{equation}
where $j = 1,2$ correspond to TE and TM polarizations, respectively. The invariance in one direction for two dimensional model allow us to simplify the Hodge operator as the second partial derivative on the contour. Where the electromagnetic current densities $\mathbf{n}\times \mathbf{M}$ and $\mathbf{J}$ have $\boldsymbol{\tau}$ direction for TE polarization, and $\boldsymbol{\nu}$ direction for TM polarization.
\begin{itemize}
 \item TE: $\partial_x^2 \mathbf{J} = \boldsymbol{\tau} \partial_x^2 J_{\tau} = \boldsymbol{\tau} d_l^2 J_{\tau}$ and $\partial_x^2(\mathbf{n} \times \mathbf{M}) = - \boldsymbol{\tau} \partial_x^2 M_{\nu} = - \boldsymbol{\tau} d_l^2 M_{\nu}$;
 \item TM: $\partial_x^2 \mathbf{J} = \boldsymbol{\nu} \partial_x^2 J_{\nu} = \boldsymbol{\nu} d_l^2 J_{\nu}$ and $\partial_x^2(\mathbf{n} \times \mathbf{M}) = \boldsymbol{\nu} \partial_x^2 M_{\tau}  = \boldsymbol{\nu} d_l^2 M_{\tau}$.
\end{itemize}
%


\subsection{Variational formulation with first order IBC} 

We weakly write the equation (\ref{2D_ibc1_eq}), we multiply it by a test function and integrate along the contour $C$. In EFIE test function is noted as $\Psi_j$, so we use it to insert our boundary condition in this equation.
$$ \int_C (1 + b_j d_l^2)(\mathbf{n} \times \mathbf{M}) \cdot \boldsymbol{\Psi}_J dl = \int_C (a_0 + a_j d_l^2) \mathbf{J} \cdot \boldsymbol{\Psi}_J dl .$$
That gives us
$$ \int_C (\mathbf{n} \times \mathbf{M}) \cdot \boldsymbol{\Psi}_J dl =  \int_C (a_0 + a_j d_l^2)\mathbf{J} \cdot \boldsymbol{\Psi}_J dl -  \int_C b_j d_l^2 (\mathbf{n} \times \mathbf{M}) \cdot \boldsymbol{\Psi}_J dl .$$
We put it in the operator $P$ and obtain:
$$<P\mathbf{M}, \boldsymbol{\Psi}_J> = \frac{1}{2}\int_C (\mathbf{n} \times \mathbf{M}) \cdot \boldsymbol{\Psi}_J dl = \frac{a_0}{2}\int_C \mathbf{J} \cdot \boldsymbol{\Psi}_J dl $$
\begin{equation}\label{P_EFIE}
+ \frac{a_j}{2}\int_C d_l^2 \mathbf{J} \cdot \boldsymbol{\Psi}_J dl - \frac{b_j}{2}\int_C d_l^2 (\mathbf{n} \times \mathbf{M}) \cdot \boldsymbol{\Psi}_J dl
\end{equation}
We use $(\mathbf{n} \times \boldsymbol{\Psi}_M)$ as a test function, to insert IBC1 in MFIE
$$ \int_C (1 + b_j d^2_l)(\mathbf{n} \times \mathbf{M}) \cdot (\mathbf{n} \times \boldsymbol{\Psi}_M) dl = \int_C ( a_0 + a_j d^2_l)\mathbf{J} \cdot (\mathbf{n} \times \boldsymbol{\Psi}_M) dl .$$
We take the first part of right side
$$ \int_C \mathbf{J} \cdot (\mathbf{n} \times \boldsymbol{\Psi}_M ) dl = \frac{1}{a_0} \int_C (1 + b_j d^2_l)(\mathbf{n} \times \mathbf{M}) \cdot (\mathbf{n} \times \boldsymbol{\Psi}_M) dl - \frac{1}{a_0} \int_C a_j d^2_l \mathbf{J} \cdot (\mathbf{n} \times \boldsymbol{\Psi}_M ) dl .$$
And using the formula of vector analysis
$$\boldsymbol{\Psi}_M \cdot (\mathbf{n} \times \mathbf{J}) = - \mathbf{J} \cdot (\mathbf{n} \times \boldsymbol{\Psi}_M) ,$$
we put it in P operator with weakly form of IBC1
$$<P \mathbf{J}, \boldsymbol{\Psi}_M> = \frac{1}{2}\int_C (\mathbf{n} \times \mathbf{J}) \cdot \boldsymbol{\Psi}_M dl = -\frac{1}{2}\int_C \mathbf{J}\cdot (\mathbf{n} \times \boldsymbol{\Psi}_M) dl =$$
$$= -\frac{1}{2 a_0}\int_C (\mathbf{n} \times \mathbf{M}) \cdot (\mathbf{n} \times \boldsymbol{\Psi}_M) dl - \frac{b_j}{2 a_0 }\int_C d_l^2 (\mathbf{n} \times \mathbf{M}) \cdot (\mathbf{n} \times \boldsymbol{\Psi}_M) dl $$
\begin{equation}\label{P_MFIE}
+ \frac{a_j}{2 a_0} \int_C d_l^2 \mathbf{J} \cdot (\mathbf{n} \times \boldsymbol{\Psi}_M ) dl
\end{equation}

\vspace{2mm}

First, we observe TE polarization, where $P$ operator becomes:
$$\int_C P M_{\nu}\ \Psi_{J\tau} dl = \frac{a_0}{2} \int_C J_{\tau}\ \Psi_{J\tau} dl + \frac{a_1}{2} \int_C d_l^2 J_{\tau}\ \Psi_{J\tau} dl  + \frac{b_1}{2} \int_C d_l^2 M_{\nu}\ \Psi_{J\tau} dl$$
and
$$\int_C P J_{\tau}\ \Psi_{M\nu} dl = -\frac{1}{2 a_0}\int_C M_{\nu}\ \Psi_{M\nu} dl - \frac{b_1}{2 a_0}\int_C d_l^2 M_{\nu}\ \Psi_{M \nu} dl - \frac{a_1}{2 a_0}\int_C d_l^2 J_{\tau}\ \Psi_{M\nu} dl$$
for EFIE and MFIE, respectively.

We put them in the variational equations (\ref{EFIE}) and (\ref{MFIE}) and get:
$$ iZ_0 \iint_C k G(l,l')\ J_{\tau}(l')\ \Psi_{J\tau}\ [\boldsymbol{\tau}(l') \cdot \boldsymbol{\tau}(l)] - \frac{1}{k} G(l,l')\ d'_l J_{\tau}(l')\ d_l\Psi_{J\tau}(l)\ dl' dl$$
$$ + \iint_C \Psi_{J\tau}(l)\ M_{\nu}(l')\ [ \boldsymbol{\tau}(l) \times \boldsymbol{\nu}(l')] \cdot \nabla_{\Gamma} G(l,l')\ dl' dl $$
$$+ \frac{a_0}{2}\int_C J_{\tau}\ \Psi_{J\tau} dl + \frac{a_1}{2} \int_C d_l^2 J_{\tau}\ \Psi_{J\tau} dl $$
\begin{equation}\label{EFIE_var}
 + \frac{b_1}{2} \int_C d_l^2 M_{\nu}\ \Psi_{J\tau} dl = \int_C E_{\tau}^{inc}\ \Psi_{J\tau} dl
\end{equation}

and

$$-\iint_C \Psi_{M\nu}(l)\ J_{\tau}(l')\ [\boldsymbol{\nu(l)} \times \boldsymbol{\tau(l')}] \cdot \nabla_{\Gamma} G(l,l') dl' dl$$
$$ + \frac{i}{Z_0}\iint_C kG(l,l')\ M_{\nu}(l')\ \Psi_{M\nu}(l) [\boldsymbol{\nu(l')} \cdot \boldsymbol{\nu(l)}] dl' dl$$
$$+ \frac{1}{2 a_0}\int_C M_{\nu}\ \Psi_{M\nu} dl + \frac{b_j}{2 a_0} \int_C d_l^2 M_{\nu}\ \Psi_{M\nu} dl $$
\begin{equation}\label{MFIE_var}
 +\frac{a_j}{2 a_0} \int_C d_l^2 J_{\tau}\ \Psi_{M\nu} dl = \int_C H_{\nu}^{inc}\ \Psi_{M \nu} dl
\end{equation}
for EFIE and MFIE, respectively.

In the equations (\ref{EFIE_var}) and (\ref{MFIE_var}) we have scalar products $[ \boldsymbol{\tau}(l') \cdot \boldsymbol{\tau}(l) ] = 1$ and $[ \boldsymbol{\nu}(l) \cdot \boldsymbol{\nu}(l') ] = 1$, and vector products $[ \boldsymbol{\tau}(l) \times \boldsymbol{\nu}(l')] = \mathbf{n}(l)$ and $[ \boldsymbol{\nu}(l) \times \boldsymbol{\tau}(l') ] = -\mathbf{n}(l')$. The operator $S$ contains surface divergence operator that becomes differential operator
$$\div_{\Gamma}\mathbf{J} = \div_{\Gamma}(\boldsymbol{\tau} J_{\tau}) = d_l J_{\tau} ; \,\,\,\,\,\,\, \div_{\Gamma}\mathbf{M} = \div_{\Gamma}(\boldsymbol{\nu} M_{\nu}) = d_{\nu} M_{\nu} \equiv 0 ,$$
because the model is invariance in $\nu$ parameter.

By doing integration by parts, we have
\begin{equation}
 \frac{b_1}{2} \int_C d_l^2 M_{\nu}(l) \Psi_{J\tau}(l) dl = -\frac{b_1}{2} \int_C d_l M_{\nu}(l)\ d_l \Psi_{J\tau}(l) dl .
\end{equation}

Finally we combine two equations (\ref{EFIE_var})-(\ref{MFIE_var}) to present next variational problem:

\begin{problem}\label{Prb_Var_IBC1_2D} 
Find $U = (J_{\tau}, M_{\nu}) \in  [ H^1(C) ]^2$ such that:

\begin{equation*}\label{EqVar}
 A(U, \Psi) = \int_C E_{\tau}^{inc} \Psi_{J\tau} dl + \int_C H_{\nu}^{inc} \Psi_{M\nu} dl
\end{equation*}
for all $\Psi = (\Psi_{J\tau}, \Psi_{M\nu}) \in [ H^1(C) ]^2$, where the bilinear form $A$ is defined as:

$$ A(U, \Psi) = iZ_0 \iint_C k G(l,l') J_{\tau}(l') \Psi_{J\tau}\ [\tau(l)\cdot\tau(l')] - \frac{1}{k} G(l,l') d'_l J_{\tau}(l') d_l \Psi_{J\tau}(l) dl' dl$$
$$ + \iint_C \Psi_{J\tau}(l) M_{\nu}(l')\ \mathbf{n}(l) \cdot \nabla_{\Gamma} G(l,l') dl' dl + \iint_C \Psi_{M \nu} J_{\tau}\ \mathbf{n}(l') \cdot \nabla_{\Gamma} G(l,l') dl' dl $$
$$ + \frac{i}{Z_0}\iint_C kG(l,l') M_{\nu}(l') \Psi_{M\nu}(l) dl' dl + \frac{a_0}{2}\int_C J_{\tau} \Psi_{J\tau} dl + \frac{1}{2a_0}\int_C M_{\nu} \Psi_{M\nu} dl $$
\begin{equation}\label{VarForm}
- \frac{a_1}{2 }\int_C d_l J\ d_l\Psi_{J\tau} dl - \frac{b_1}{2} \int_C d_l M\ d_l\Psi_{J\tau} dl - \frac{b_1}{2 a_0} \int_C d_l M\ d_l\Psi_{M\nu} dl - \frac{a_1}{2 a_0} \int_C d_l J\ d_l\Psi_{M\nu} dl
\end{equation}
\end{problem}
%


We present similar variational problem for TM polarization:

\begin{problem} \label{problem_TM}
Find $U = (J_{\nu}, M_{\tau}) \in [ H^1(C) ]^2$ such that:
\begin{equation*}\label{EqVar_2}
 A(U, \Psi) = \int_C E_{\nu}^{inc} \Psi_{J\nu} dl + \int_C H_{\tau}^{inc} \Psi_{M\tau} dl
\end{equation*}
for all $\Psi = (\Psi_{J\nu}, \Psi_{M\tau}) \in [ H^1(C) ]^2$, where the bilinear form $A$ is defined as:
$$ A(U, \Psi) = iZ_0 \iint_C k G(l,l') J_{\nu}(l')\Psi_{J \nu} dl' dl - \iint_C \Psi_{J\nu}(l) M_{\tau}(l')\ \mathbf{n}(l') \cdot \nabla_{\Gamma} G(l,l') dl' dl $$
$$ - \iint_C \Psi_{M\tau} J_{\nu}\ \mathbf{n}(l) \cdot \nabla_{\Gamma} G(l,l') dl' dl + \frac{i}{Z_0}\iint_C kG(l,l') M_{\tau}(l') \Psi_{M\tau}(l) [\tau(l)\cdot\tau(l')] $$
$$- \frac{1}{k} G(l,l') d'_l M_{\tau}(l') d_l \Psi_{M \tau}(l)  dl' dl + \frac{a_0}{2}\int_C J_{\nu} \Psi_{J \nu} dl - \frac{1}{2 a_0}\int_C M_{\tau} \Psi_{M\tau} dl $$ 
\begin{equation}
- \frac{a_2}{2} \int_C d_l J\ d_l\Psi_{J \nu} dl  + \frac{b_2}{2} \int_C d_l M\ d_l\Psi_{J\nu} dl + \frac{b_2}{2 a_0} \int_C d_l M\ d_l\Psi_{M \tau} dl - \frac{a_2}{2 a_0} \int_C d_l J\ d_l\Psi_{M\tau} dl
\end{equation}
\end{problem}
\subsection{Variational formulation with second order IBC} 

The equation (\ref{2D_ibc2_eq}) passes the same way as IBC1 to become weak. The weak formulations replace operator $P$ in EFIE and MFIE equations. Finally, we assemble them to define the bilinear form:
$$ A(U, \Psi) = iZ_0 \iint_C k G(l,l') J_{\tau}(l') \Psi_{J\tau}\ [\boldsymbol{\tau}(l) \cdot \boldsymbol{\tau}(l')] - \frac{1}{k} G(l,l') d'_l J_{\tau}(l') d_l \Psi_{J\tau}(l) dl' dl$$
$$ + \iint_C \Psi_{J\tau}(l) M_{\nu}(l')\ \mathbf{n}(l) \cdot \nabla_{\Gamma} G(l,l') dl' dl
 + \iint_C \Psi_{M\nu} J_{\tau}\ \mathbf{n}(l') \cdot \nabla_{\Gamma} G(l,l') dl' dl $$
$$ + \frac{i}{Z_0} \iint_C kG(l,l') M_{\nu}(l') \Psi_{M\nu}(l) dl' dl 
 + \frac{a_0}{2} \int_C J_{\tau} \Psi_{J\tau} dl + \frac{1}{2 a_0}\int_C M_{\nu} \Psi_{M\nu} dl $$
$$ + \frac{a_1}{2} \int_C d^2_l J_{\tau} \Psi_{J\tau} dl + \frac{b_1}{2} \int_C d^2_l M_{\nu} \Psi_{J\tau} dl
 + \frac{b_1}{2 a_0} \int_C d^2_l M_{\nu} \Psi_{M\nu} dl + \frac{a_1}{2 a_0} \int_C d^2_l J_{\tau} \Psi_{M\nu} dl$$
$$ + \frac{a'_1}{2}\int_C d^4_l M_{\nu}\ \Psi_{J\tau} dl + \frac{b'_1}{2} \int_C d^4_l M_{\nu}\ \Psi_{J\tau} + \frac{b'_1}{2 a_0} \int_C d^4_l M_{\nu}\ \Psi_{M\nu} dl + \frac{a'_1}{2 a_0} \int_C d^4_l J_{\tau}\ \Psi_{M\nu} dl $$
for TE polarization. And with integration by parts, we get
$$ A(U, \Psi) = iZ_0 \iint_C k G(l,l') J_{\tau}(l') \Psi_{J\tau}\ [\boldsymbol{\tau}(l) \cdot \boldsymbol{\tau}(l')] - \frac{1}{k} G(l,l') d'_l J_{\tau}(l') d_l \Psi_{J\tau}(l) dl' dl$$
$$ + \iint_C \Psi_{J\tau}(l) M_{\nu}(l')\ \mathbf{n}(l) \cdot \nabla_{\Gamma} G(l,l') dl' dl
 + \iint_C \Psi_{M\nu} J_{\tau}\ \mathbf{n}(l') \cdot \nabla_{\Gamma} G(l,l') dl' dl $$
$$ + \frac{i}{Z_0} \iint_C kG(l,l') M_{\nu}(l') \Psi_{M\nu}(l) dl' dl 
 + \frac{a_0}{2} \int_C J_{\tau} \Psi_{J\tau} dl + \frac{1}{2 a_0}\int_C M_{\nu} \Psi_{M\nu} dl $$
$$ - \frac{a_1}{2} \int_C d_l J_{\tau}\ d_l\Psi_{J\tau} dl - \frac{b_1}{2} \int_C d_l M_{\nu}\ d_l\Psi_{J\tau} dl
 - \frac{b_1}{2 a_0} \int_C d_l M_{\nu}\ d_l\Psi_{M\nu} dl - \frac{a_1}{2 a_0} \int_C d_l J_{\tau}\ d_l\Psi_{M\nu} dl$$
$$ + \frac{a'_1}{2}\int_C d^2_l M_{\nu}\ d^2_l\Psi_{J\tau} dl + \frac{b'_1}{2} \int_C d^2_l M_{\nu}\ d^2_l\Psi_{J\tau} 
+ \frac{b'_1}{2 a_0} \int_C d^2_l M_{\nu}\ d^2_l\Psi_{M\nu} dl + \frac{a'_1}{2 a_0} \int_C d^2_l J_{\tau}\ d^2_l\Psi_{M\nu} dl $$
We thus obtain a variational formulation for the problem in $\mathbb{R}^2$ with second order IBC written as follows:
\begin{problem}\label{problem_IBC2}
Find $U = (J_{\tau}, M_{\nu}) \in [ H^1(C) ]^2 $ such that 
$$A(U, \Psi) = \int_C E_{\tau}^{inc} \Psi_{J\tau} dl + \int_C H_{\nu}^{inc} \Psi_{M\nu} dl $$
for all $\Psi = ( \Psi_{J\tau}, \Psi_{M\nu} ) \in [ H^1(C) ]^2$.
\end{problem}

In the following, we study the well-posedness of the formulation (\ref{Prb_Var_IBC1_2D})
\subsection{Existence and uniqueness theorem}
We are going to show that our variational problem in TE has a unique solution using theorem  from \cite{JCN}. For TM problem \ref{problem_TM}, it will be analogous. It is necessary to determine the continuity and the coercivity of the bilinear form $A(U, \Psi)$.

For the sake of simplicity we consider the operator $A(U, \Psi)$ as a sum of three bilinear operator
$$A_1(U, \Psi) = \iint_C Z_0(B-S) J_{\tau} \Psi_{J\tau} dl'dl + \iint_C \frac{1}{Z_0} (B-S) M_{\nu} \Psi_{M\nu} dl'dl + \iint_C Q M_{\nu} \Psi_{J\tau} dl'dl$$
$$ + \iint_C Q J_{\tau} \Psi_{M\nu} dl'dl + \frac{a_0}{2} \int_C J_{\tau} \Psi_{J\tau} dl + \frac{1}{2 a_0} \int_C M_{\nu} \Psi_{M\nu} dl$$
$$A_2(U, \Psi) = -\frac{a_1}{2}\int_C d_l J\ d_l\Psi_{J\tau} dl - \frac{b_1}{2 a_0} \int_C d_l M\ d_l\Psi_{M\nu} dl$$
and
$$A_3(U, \Psi) = -\frac{b_1}{2} \int_C d_l M\ d_l\Psi_{J\tau} dl - \frac{a_1}{2 a_0} \int_C d_l J\ d_l\Psi_{M\nu} dl$$
where
$$A = A_1 + A_2 +A_3$$

%
We have the following result about continuous linear  form
$A(U, \Psi)$.
\begin{lem}\label{lemma_cont_2d}
 The bilinear form $A(U, \Psi)$ (\ref{VarForm}) is continuous on $[H^1(C)]^2$.
\end{lem}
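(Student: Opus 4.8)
The plan is to prove continuity of $A(U,\Psi)$ on $[H^1(C)]^2$ by exploiting the decomposition $A = A_1 + A_2 + A_3$ that the authors have just introduced, bounding each piece separately and invoking the continuity estimates already established for the integral operators. Concretely, I would show that there exists a constant $C > 0$ such that $|A(U,\Psi)| \leq C \,\|U\|_{[H^1(C)]^2}\,\|\Psi\|_{[H^1(C)]^2}$ for all $U,\Psi \in [H^1(C)]^2$, and then conclude by the triangle inequality over the three summands.

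First I would treat $A_1$, which collects the boundary-integral contributions together with the two zeroth-order mass terms. The terms involving $(B-S)$ are controlled by Theorem~\ref{thm_oper_BmS}, which gives $\|(B-S)\boldsymbol\phi\|_{-1/2,\rot_\Gamma} \leq C\|\boldsymbol\phi\|_{-1/2,\div_\Gamma}$; combined with the duality pairing from the Proposition, each such bilinear term is bounded by a product of $H^{-1/2}(\div,\Gamma)$-norms. The terms with $Q$ are handled analogously using Theorem~\ref{thm_oper_Q}. Since on the curve $C$ the trace spaces $H^{-1/2}(\div,C)$ and $H^{-1/2}(\rot,C)$ are continuously embedded targets of the $H^1(C)$ densities $J_\tau, M_\nu$ (via the trace theorems \ref{2.7} and the following two), these $-1/2$ estimates lift to $H^1(C)$ bounds. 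The remaining two integrals $\frac{a_0}{2}\int_C J_\tau\Psi_{J\tau}\,dl$ and $\frac{1}{2a_0}\int_C M_\nu\Psi_{M\nu}\,dl$ are bounded by Cauchy--Schwarz in $L^2(C)$, hence a fortiori in $H^1(C)$.

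Next I would dispatch $A_2$ and $A_3$, which after the integration by parts performed in the text consist entirely of terms of the form $\int_C d_l f \, d_l g \, dl$ with coefficients $a_1, b_1, a_1/a_0, b_1/a_0$. Each such term is bounded directly by Cauchy--Schwarz: $\bigl|\int_C d_l f\, d_l g\,dl\bigr| \leq \|d_l f\|_{L^2(C)}\|d_l g\|_{L^2(C)} \leq \|f\|_{H^1(C)}\|g\|_{H^1(C)}$. Collecting the four contributions and absorbing all fixed coefficients into a single constant gives the desired bound for $A_2 + A_3$.

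\textbf{The main obstacle} is not any individual estimate but the bookkeeping that links the $-1/2$ trace norms appearing in Theorems~\ref{thm_oper_Q} and~\ref{thm_oper_BmS} to the $H^1(C)$ norm in which continuity is asserted: one must be careful that the kernel $G(l,l')$ and the geometric scalar/vector products $[\boldsymbol\tau(l)\cdot\boldsymbol\tau(l')]$, $\mathbf{n}(l)\cdot\nabla_\Gamma G(l,l')$ appearing in $A_1$ genuinely produce the claimed $(B-S)$ and $Q$ operators so that the abstract continuity results apply. Once the identification of $A_1$ with the operator form is granted (as the authors implicitly do in writing $A_1$ via $(B-S)$ and $Q$), the continuity follows by combining the operator bounds with the trace theorems and Cauchy--Schwarz, and summing the three pieces.
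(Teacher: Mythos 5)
Your proposal is correct and follows essentially the same route as the paper: the same decomposition $A = A_1 + A_2 + A_3$, with $A_1$ controlled by the known continuity of the boundary-integral operators (the paper simply cites \cite{Lange-1995} for this bound, whereas you reconstruct it from Theorems \ref{thm_oper_Q} and \ref{thm_oper_BmS} together with the embedding of the $H^1(C)$ densities into the $-1/2$ trace spaces), and $A_2 + A_3$ bounded term by term via Cauchy--Schwarz, absorbing the coefficients into one constant. Your Cauchy--Schwarz step is in fact written more carefully than the paper's, which misprints $\|\Psi_{J\tau}\|_{L^2}$ and $\|\Psi_{M\nu}\|_{L^2}$ where the derivative norms $\|d_l\Psi_{J\tau}\|_{L^2}$ and $\|d_l\Psi_{M\nu}\|_{L^2}$ are intended.
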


\begin{proof}:
%
We have to show that there exists $\beta > 0$ such that 
\begin{equation}\label{cont_ineq}
 |A(U, \Psi)| \leq \beta \|U\|_{H^1(C)} \| \Psi \|_{H^1(C)} 
\end{equation}
for all $U, \Psi \in H^1(C)$.

From the works in the past \cite{Lange-1995}, we have that for operator $A_1(U, \Psi)$ 
there exists constant $\beta_1 > 0$ such that
%
%
$$|A_1(U, \Psi)| \leq \beta_1  \| U \|_{H^1(C)} \| \Psi \|_{H^1(C)} $$
It remains to prove continuity for last integrals in bilinear form. Using Cauchy-Schwarz inequality we get:
$$ |A_2(U, \Psi) + A_3(U, \Psi)| \leq $$
$$\left| \frac{a_1}{2}\int_C d_l J\ d_l\Psi_{J\tau} dl \right| + \left| \frac{b_1}{2} \int_C d_l M\ d_l\Psi_{J\tau} dl \right| + \left| \frac{b_1}{2 a_0} \int_C d_l M\ d_l\Psi_{M\nu} dl \right| + \left| \frac{a_1}{2 a_0} \int_C d_l J\ d_l\Psi_{M\nu} dl \right| \leq $$
$$\left| \frac{a_1}{2}\right| \|d_l J\|_{L^2} \| \Psi_{J\tau}\|_{L^2} + \left| \frac{b_1}{2} \right| \|d_l M\|_{L^2} \| \Psi_{J\tau}\|_{L^2}  + \left| \frac{b_1}{2 a_0} \right| \|d_l M \|_{L^2} \| \Psi_{M\nu}\|_{L^2} + \left| \frac{a_1}{2 a_0} \right| \|d_l J \|_{L^2} \| \Psi_{M\nu} \|_{L^2}  $$
$$ \leq \beta_2 \| U \|_{H^1(C)} \| \Psi \|_{H^1(C)} \ \ ,\ where\ \beta_2\geq 0. $$
Finally, we take $\beta = \beta_1 + \beta_2 \geq 0$ for which (\ref{cont_ineq}) is true. 
\end{proof}

It remains to show  the existence and uniqueness for the variational problem \ref{Prb_Var_IBC1_2D}. Therefore, we give the following result about coercivity of the bilinear form $A(U, \Psi)$
\begin{lem}\label{lemma_coer_2d}
The bilinear form $A(U, \Psi)$ is coercive on $H^1(C)$; i.e., there exists $\gamma > 0$ and $\gamma'$ such that
$$ \Re[A(U, U^*) ] \geq \gamma \|U\|^2_{H^1(C)} - \gamma' \|U\|^2_{L^2(C)}, \ \ \ \forall U \in [H^1(C)]^4. $$ 
\end{lem}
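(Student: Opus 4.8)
The plan is to establish a G\aa rding-type inequality for $A(U,U^*)$ by decomposing the form as $A = A_1 + A_2 + A_3$ exactly as introduced before the statement, and controlling each piece separately. The principal part comes from $A_1$, which contains the boundary integral operators $(B-S)$ and $Q$ together with the zeroth-order mass terms. Using \Cref{thm_oper_BmS}, the coercivity relation $\Re(<\boldsymbol\phi,(B-S)\boldsymbol\phi>)\geq C\|\boldsymbol\phi\|^2_{-1/2,\div_{\Gamma}}$ gives a lower bound for the diagonal $(B-S)$ contributions in the appropriate trace norm; the key point is that on the contour $C$ the surface divergence reduces to $d_l$, so that the $-1/2,\div_{\Gamma}$ norm controls (modulo compact/lower-order terms) the tangential derivative $\|d_l J_\tau\|$ and $\|d_l M_\nu\|$. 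The off-diagonal $Q$-terms are lower order by \Cref{thm_oper_Q} (the operator is bounded $H^{-1/2}(\div)\to H^{-1/2}(\rot)$), so they can be absorbed into the $-\gamma'\|U\|^2_{L^2(C)}$ remainder.

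Next I would treat $A_2$, whose two integrals $-\tfrac{a_1}{2}\int_C d_l J\,d_l\Psi_{J\tau}$ and $-\tfrac{b_1}{2a_0}\int_C d_l M\,d_l\Psi_{M\nu}$ become, upon substituting $\Psi = U^*$, terms of the form $-\tfrac{a_1}{2}\|d_l J_\tau\|^2_{L^2}$ and $-\tfrac{b_1}{2a_0}\|d_l M_\nu\|^2_{L^2}$. These are precisely the $H^1$-seminorm contributions that promote the coercivity from $L^2$ up to $H^1(C)$. Taking real parts and using the sign conditions on the coefficients $a_1, b_1, a_0$ coming from the SUC (\ref{SUC_1}) — in particular that $\Re$ of the relevant coefficient combinations has a definite sign — one obtains a positive multiple of $\|d_l J_\tau\|^2_{L^2}+\|d_l M_\nu\|^2_{L^2}$. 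Combined with the $L^2$-control extracted from $A_1$, this yields the full $\gamma\|U\|^2_{H^1(C)}$ lower bound after subtracting the harmless $\gamma'\|U\|^2_{L^2(C)}$ term.

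Finally, $A_3$ contains the genuinely off-diagonal seminorm couplings $-\tfrac{b_1}{2}\int_C d_l M\,d_l\Psi_{J\tau}$ and $-\tfrac{a_1}{2a_0}\int_C d_l J\,d_l\Psi_{M\nu}$. On the diagonal $\Psi=U^*$ these produce cross terms mixing $d_l J_\tau$ with $d_l M_\nu$; I would bound them by Cauchy--Schwarz and Young's inequality, $|d_l M_\nu\,d_l J_\tau^*|\leq \tfrac{\delta}{2}|d_l J_\tau|^2 + \tfrac{1}{2\delta}|d_l M_\nu|^2$, choosing $\delta$ small enough that these cross terms are strictly dominated by the positive diagonal seminorm terms already secured from $A_2$ and $A_1$. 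This is the delicate step: the argument goes through only if the coefficient magnitudes $|b_1|, |a_1|$ are small relative to the coercivity constants, or equivalently if the SUC sign conditions guarantee that the quadratic form in $(d_l J_\tau, d_l M_\nu)$ is positive definite.

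The main obstacle I expect is exactly this last balancing: ensuring that after collecting all first-derivative contributions from $A_1$, $A_2$ and $A_3$, the resulting $2\times 2$ quadratic form in the seminorms $\|d_l J_\tau\|_{L^2}$ and $\|d_l M_\nu\|_{L^2}$ stays coercive. This forces one to invoke the precise algebraic inequalities among $a_0, a_1, b_1$ from the uniqueness conditions (\ref{SUC_1}) rather than treating the coefficients as arbitrary, and it is where the physical admissibility of the impedance coefficients enters the functional-analytic estimate. The compact embedding $H^1(C)\hookrightarrow L^2(C)$ then justifies collecting all lower-order and $Q$-type contributions into $-\gamma'\|U\|^2_{L^2(C)}$, completing the G\aa rding inequality.
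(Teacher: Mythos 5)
Your overall plan (decompose $A = A_1 + A_2 + A_3$, set $\Psi = U^*$, and assemble a G\aa rding inequality) matches the paper's, but you locate the essential positivity in the wrong places, and in both places there is a genuine gap. First, the $H^1$-seminorm control cannot come from $A_2$ with $A_3$ absorbed by Young's inequality. Set $q = b_1|a_0| + a_1^* a_0/|a_0|$. Using the SUC identity $\Re(a_1) = \Re(b_1 a_0^*)$ one finds $\Re\bigl(A_2(U,U^*)\bigr) = -\tfrac{\Re(a_1)}{2}\bigl(\|d_l J_{\tau}\|^2_{L^2} + |a_0|^{-2}\|d_l M_{\nu}\|^2_{L^2}\bigr)$, while the cross terms satisfy only $\Re\bigl(A_3(U,U^*)\bigr) \geq -\tfrac{|q|}{4}\|d_l J_{\tau}\|^2_{L^2} - \tfrac{|q|}{4|a_0|^2}\|d_l M_{\nu}\|^2_{L^2}$, and this Cauchy--Schwarz bound cannot be improved. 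Hence $\Re(A_2 + A_3) \geq -\tfrac{1}{2}\bigl(\Re(a_1) + \tfrac{|q|}{2}\bigr)\bigl(\|d_l J_{\tau}\|^2_{L^2} + |a_0|^{-2}\|d_l M_{\nu}\|^2_{L^2}\bigr)$, and under the coefficient condition the paper imposes for its existence theorem, namely $\Re(a_1) + |q|/2 = 0$ in (\ref{eq_ex&un_coef_2d}), the surplus is exactly zero: the off-diagonal terms eat the diagonal ones completely, and no choice of your Young parameter $\delta$ leaves a positive multiple of the seminorms. Note also that the SUC (\ref{SUC_1}) contain no sign information on $\Re(a_1)$ by themselves ($\Re(a_1 - b_1^* a_0) = 0$ is an identity relating $a_1$ and $b_1$, not a sign condition), so the ``positive multiple'' you claim from $A_2$ is unsupported; the paper proves $\Re(A_2) + \Re(A_3) \geq 0$ and nothing more.

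Second, your fallback source of derivative control, the coercivity of $(B-S)$ from Theorem~\ref{thm_oper_BmS}, cannot supply it either. That theorem gives $\Re\langle\boldsymbol\phi,(B-S)\boldsymbol\phi\rangle \geq C\|\boldsymbol\phi\|^2_{-1/2,\div_{\Gamma}}$, and on a contour the $(-1/2,\div)$ norm controls $\|d_l J_{\tau}\|_{H^{-1/2}}$, i.e.\ roughly the $H^{1/2}$ norm of $J_{\tau}$; the gap between $H^{1/2}$ and $H^1$ is a half order of smoothness, not a compact or lower-order perturbation, so it cannot be pushed into the $-\gamma'\|U\|^2_{L^2(C)}$ remainder. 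The paper closes this hole differently: it does not derive the $H^1$ bound from Theorem~\ref{thm_oper_BmS} at all, but quotes from Lange \cite{Lange-1995} the stronger statement $\Re[A_1(U,U^*)] \geq \tfrac{\Re(a_0)}{2}\|J_{\tau}\|^2_{L^2(C)} + \tfrac{\Re(a_0)}{2|a_0|^2}\|M_{\nu}\|^2_{L^2(C)} + \gamma_1\bigl(\|J_{\tau}\|^2_{H^1(C)} + \|M_{\nu}\|^2_{H^1(C)}\bigr)$, i.e.\ the full $H^1$ coercivity is a property of the integral-operator block $A_1$ itself, taken as a black box. With that as the engine, all that is required of $A_2 + A_3$ is nonnegativity, which is exactly what the coefficient condition delivers. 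To repair your argument you must either import that stronger coercivity result for $A_1$, or assume the strict inequality $\Re(a_1) + |q|/2 < 0$, which is not the hypothesis under which the paper works.
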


\begin{proof}
First of all, we take $\Psi = U^*$ and get
$$ A(U, U^*) = iZ_0 \iint_C k G J_{\tau} J^*_{\tau} [\boldsymbol{\tau}(l') \cdot \boldsymbol{\tau}(l)] - \frac{1}{k} d'_l J_{\tau}(l') d_l J^*_{\tau}(l) dl' dl$$
$$ + \iint_C J^*_{\tau} M_{\nu} \mathbf{n}(l) \cdot \nabla_{\Gamma} G dl' dl 
 + \iint_C M^*_{\nu} J_{\tau} \mathbf{n}(l') \cdot \nabla_{\Gamma} G dl' dl $$
$$ + \frac{i}{Z_0} \iint_C kG M_{\nu} M^*_{\nu} [\boldsymbol{\nu}(l') \cdot \boldsymbol{\nu}(l)] dl' dl 
 + \frac{a_0}{2}\int_C J_{\tau} J^*_{\tau} dl + \frac{1}{2 a_0}\int_C M_{\nu} M^*_{\nu} dl $$
\begin{equation}\label{eq_coer_2d}
 - \frac{a_1}{2}\int_C d_l J_{\tau}\ d_l J^*_{\tau} dl - \frac{b_1}{2} \int_C d_l M_{\nu}\ d_l J^*_{\tau} dl 
 - \frac{b_1}{2 a_0} \int_C d_l M_{\nu}\ d_l M^*_{\nu} dl - \frac{a_1}{2 a_0} \int_C d_l J_{\tau}\ d_l M^*_{\nu} dl
\end{equation}

As in continuity, from works in the past \cite{Lange-1995}, we have that there exists $\gamma_1 > 0$ for 
the operator $A_1(U, \Psi)$, such that $\forall U \in V$
%
%
%
$$\Re[A_1(U, U^*)] \geq \frac{\Re(a_0)}{2} \|J_{\tau}\|^2_{L^2(C)} + \frac{\Re(a_0)}{2 |a_0|^2} \|M_{\nu}\|^2_{L^2(C)} + \gamma_1 \left( \|J_{\tau}\|^2_{H^1(C)} + \|M_{\nu}\|^2_{H^1(C)} \right)$$

Next we take operator $A_2$
$$A_2 = - \frac{a_1}{2}\int_C d_l J_{\tau}\ d_l J^*_{\tau} dl - \frac{b_1}{2 a_0} \int_C d_l M_{\nu}\ d_l M^*_{\nu} dl $$
where real part
$$\Re(A_2) = -\frac{ \Re(a_1) }{2} \| d_l J_{\tau} \|^2_{L^2(C)} -  \Re(\frac{b_1}{2 a_0}) \|d_l M_{\nu}\|^2_{L^2(C)} $$

And it remains
$$A_3 = - \frac{b_1}{2} \int_C d_l M_{\nu}\ d_l J^*_{\tau} dl - \frac{a_1}{2 a_0} \int_C d_l J_{\tau}\ d_l M^*_{\nu} dl $$
where real part
$$\Re(A_3) = \Re\left( -\frac{b_1}{2}\int_C d_l M_{\nu}\ d_l J^*_{\tau} dl - \frac{a_1}{2 a_0} \int_C d_l J_l\ d_l M^*_{\nu} dl \right) = $$
$$= - \Re\left[ \left( \frac{b_1}{2} + \frac{a^*_1 a_0}{2 |a_0|^2} \right) \int_C d_l M_{\nu}\ d_l J^*_{\tau} dl \right] =$$
$$= -\Re \left[ \int_C \frac{1}{|a_0|^{1/2}} \left( \frac{b_1}{2} + \frac{a^*_1 a_0}{2 |a_0|^2} \right)^{1/2} d_l M_{\nu} \cdot |a_0|^{1/2} \left( \frac{b_1}{2} + \frac{a^*_1 a_0}{2 |a_0|^2} \right)^{1/2} d_l J^*_l dl\right] $$
We note $q = b_1 |a_0| + a^*_1 a_0 /|a_0| $, so 
$$\Re(A_3) \geq - \frac{|q|}{4} \|d_l J_{\tau}\|^2_{L^2(C)} - \frac{|q|}{4|a_0|^2} \|d_l M_{\nu}\|^2_{L^2(C)} .$$

Sufficient uniqueness conditions (\ref{SUC_1}) says that $\Re(a_1 - b^*_1 a_0) = 0$ or $\Re(a_1) = \Re(b_1 a^*_0)$. So the sum of operators $A_2$ and $A_3$ get
$$\Re(A_2) + \Re(A_3) \geq $$
$$ - \frac{1}{2} \left( \Re(a_1) + \frac{|q|}{2} \right) \| d_l J_{\tau} \|^2_{L^2(C)} - \frac{1}{2 |a_0|^2} \left( \Re(a_1) + \frac{|q|}{2} \right) \| d_l M_{\nu}\|^2_{L^2(C)}$$
So, if $\Re(a_1) + \frac{|q|}{2} = 0$, we get that
$$\Re(A) = \Re(A_1) + \Re(A_2) + \Re(A_3) \geq $$
$$\geq \gamma_1 \|U\|^2_{H^1(C)} - c \|U\|^2_{L^2(C)} $$

%
%
%
%

%
%
That gives us coercivity of bilinear form $A(U, \Psi)$. 
\end{proof}
Therefore, we have the result.
\begin{theorem}
 The problem (\ref{EqVar}) admits a unique solution $U \in [H^1(C)]^2$ for any $\Psi \in [H^1(C)]^2$, if coefficients satisfy 
\begin{equation}\label{eq_ex&un_coef_2d}
 \Re(a_1) + \frac{ |a_0| | b_1 + a^*_1/a^*_0 |}{2} = 0 .
\end{equation}
\end{theorem}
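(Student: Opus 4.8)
The plan is to show that the coefficient condition (\ref{eq_ex&un_coef_2d}) is precisely the hypothesis that makes the G{\aa}rding coercivity of Lemma \ref{lemma_coer_2d} hold, and then to feed continuity, coercivity, and a compactness argument into a Fredholm-type well-posedness theorem. First I would record the algebraic identity that reconciles the notation of the coercivity proof with the statement of the theorem:
$$ q = b_1|a_0| + \frac{a_1^* a_0}{|a_0|} = |a_0|\left( b_1 + \frac{a_1^*}{a_0^*} \right), \qquad \text{so} \qquad \frac{|q|}{2} = \frac{|a_0|\,\bigl|b_1 + a_1^*/a_0^*\bigr|}{2}. $$
Thus the terminal condition $\Re(a_1) + |q|/2 = 0$ obtained at the end of the proof of Lemma \ref{lemma_coer_2d} is literally (\ref{eq_ex&un_coef_2d}). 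Under this condition the bilinear form therefore obeys the G{\aa}rding inequality
$$ \Re[A(U,U^*)] \geq \gamma \|U\|_{H^1(C)}^2 - \gamma' \|U\|_{L^2(C)}^2, \qquad \forall\, U \in [H^1(C)]^2. $$

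Next I would set $V = [H^1(C)]^2$ and introduce the bounded operator $\mathcal{A}: V \to V'$ defined by $\langle \mathcal{A}U, \Psi\rangle = A(U,\Psi)$; its boundedness is exactly Lemma \ref{lemma_cont_2d}. The G{\aa}rding inequality lets me write $\mathcal{A} = \mathcal{A}_0 + \mathcal{K}$, where $\mathcal{A}_0$ is coercive, hence boundedly invertible by Lax--Milgram, and $\mathcal{K}$ is the operator induced by the lower-order term $\gamma' \int_C U \cdot \Psi^*\,dl$. Since the embedding $H^1(C) \hookrightarrow L^2(C)$ is compact by Rellich's theorem, $\mathcal{K}$ is compact, so $\mathcal{A}$ is a Fredholm operator of index zero. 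This is precisely the framework of the abstract result of \cite{JCN}, by which solvability of (\ref{EqVar}) for every admissible right-hand side is \emph{equivalent} to uniqueness of the solution.

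It then remains to establish injectivity of $\mathcal{A}$, i.e.\ that the only $U \in V$ with $A(U,\Psi)=0$ for all $\Psi \in V$ is $U=0$. Here I would argue that a nontrivial kernel element corresponds, through the EFIE--MFIE system and the Stratton--Chu representation, to a source-free radiating pair $(\mathbf{E},\mathbf{H})$ solving the Maxwell problem (\ref{Max_Eq_SUC0}) under the first-order condition (\ref{CM1}); the sufficient uniqueness conditions (\ref{SUC_1}), of which (\ref{eq_ex&un_coef_2d}) is the pertinent two-dimensional specialization, force that field to vanish via the earlier uniqueness theorem together with Rellich's lemma, so $U=0$. Combining injectivity with the index-zero Fredholm property yields existence and uniqueness of $U \in [H^1(C)]^2$, which is the assertion.

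The hard part will be this injectivity step: one must translate the homogeneous variational identity faithfully back into a boundary-value problem for Maxwell's equations so that the uniqueness theorem behind (\ref{SUC_1}) applies, and in particular check that the single balance (\ref{eq_ex&un_coef_2d}) is compatible with the full set of sign conditions in (\ref{SUC_1}). The continuity, coercivity, and compactness ingredients are already established in Lemmas \ref{lemma_cont_2d} and \ref{lemma_coer_2d} and only need to be assembled.
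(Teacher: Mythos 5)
Your proposal is correct and is essentially the paper's own argument: the paper's proof consists of a single line, namely that Lemma \ref{lemma_cont_2d} (continuity) and Lemma \ref{lemma_coer_2d} (the G{\aa}rding-type coercivity, which holds precisely under the stated coefficient condition --- your identity $q = b_1|a_0| + a_1^* a_0/|a_0| = |a_0|\bigl(b_1 + a_1^*/a_0^*\bigr)$ is exactly the link between the lemma's condition $\Re(a_1)+|q|/2=0$ and the condition in the theorem) place the bilinear form $A$ under the hypotheses of Theorem 5.6.1 of \cite{JCN}. Everything else in your write-up --- the coercive-plus-compact splitting, the compactness of the embedding $H^1(C)\hookrightarrow L^2(C)$, the index-zero Fredholm property, and the injectivity of the homogeneous problem via the sufficient uniqueness conditions --- is material the paper leaves entirely inside that citation, so you have unpacked the same route rather than taken a different one.
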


\begin{proof}
Lemmas \ref{lemma_cont_2d}-\ref{lemma_coer_2d} give us that the bilinear form $A(U, \Psi)$ verifies hypothesis of the theorem 5.6.1 (see \cite{JCN}). 
\end{proof}
\subsection{Two dimensional discretization}
We again recognize that the two dimensional case is invariant in one direction and a two dimensional object has a unidimensional boundary. So we will use finite element method on a curvilinear contour. We use auxiliary variables $X, Y$ that were mentioned in \cite{Stupfel_APR_2005}, to avoid integration by parts. So we take formulation of bilinear form $A(U, \Psi)$ before carrying out integration by parts
%
%
%
%
%
%
%
%
And finally we get bilinear form for $U = (J_{\tau}, M_{\nu}, X, Y) \in [H^1(C)]^4$
$$A(U, \Psi) =  iZ_0 \iint_C k G(l,l')\ J_{\tau}(l')\ \Psi_{J\tau}\ [\boldsymbol{\tau}(l')\cdot\boldsymbol{\tau}(l)] - \frac{1}{k} G(l,l')\ d'_l J_{\tau}(l')\ d_l \Psi_{J\tau}(l) dl' dl$$
$$ + \iint_C \Psi_{J\tau}(l)\ M_{\nu}(l')\ \mathbf{n}(l) \cdot \nabla_{\Gamma} G(l,l') dl' dl
 + \iint_C \Psi_{M\nu}\ J_{\tau}\ \mathbf{n}(l') \cdot \nabla_{\Gamma} G(l,l') dl' dl $$
$$ + \frac{i}{Z_0}\iint_C k G(l,l')\ M_{\nu}(l')\ \Psi_{M\nu}(l) dl' dl
 + \frac{a_0}{2}\int_C J_{\tau}\ \Psi_{J\tau} dl + \frac{1}{2 a_0}\int_C M_{\nu}\ \Psi_{M\nu} dl $$
$$ + \frac{a_1}{2} \int_C d_l X\ \Psi_{J\tau} dl + \frac{b_1}{2} \int_C d_l Y\ \Psi_{J\tau} dl + \frac{b_1}{2 a_0} \int_C d_l Y\ \Psi_{M\nu} dl + \frac{a_1}{2 a_0} \int_C d_l X\ \Psi_{M\nu} dl$$
\begin{equation}\label{Dis_form_A}
+ c_1 \int_C X\ X' dl - c_1 \int_C d_l J_{\tau}\ X' dl + d_1\int_C Y\ Y' dl - d_1 \int_C d_l M_{\nu}\ Y' dl
\end{equation}
for all $\Psi = (\Psi_{J\tau}, \Psi_{M\nu}, X', Y') \in [H^1(C)]^4$.

Here, we study only TE polarization case. We assume that TM polarization has analogous results.


We approximate the curve $C$ by means of $N$ straight \textit{line segments} $C_i$, satisfying the general overlapping conditions for a finite element method. The line segments are also called \textit{elements} and provide a contour piecewise linear approximation $C^h = \cup^N_{i = 1} C_i$, where $h$ is a positive parameter such that $\lim_{h\rightarrow 0} N(h) = +\infty$. Curvilinear structures include geometry modeling errors in this approach. These errors can only be reduced by decreasing the segments lengths; i.e. by increasing the number of segments $N$. We denote nodes from $1$ to $N$. 
We consider $V_h$ a finite dimensional subspace of $H^1(C^h)$ 
$$V_h = \left\{ v_h: C^h \rightarrow \R,\ v_h \in H^1(C^h),\ v_h|_{C_i} \in P_1,\ \forall i\in {1, ... , N} \right\} \subset H^1(C^h)$$
where $P_1$ is the space of first degree polynomials, and
$$W_h = \left\{ w_h: C^h \rightarrow \R,\ w_h \in H^1(C^h),\ w_h|_{C_i} \in P_0,\ \forall i\in {1, ... , N} \right\} \subset L^2(C^h)$$
where $P_0$ is the space of constant functions.

We observe discretization of the unknowns by basis functions
\begin{equation}\label{eq_sum_basis_func_J}
 J_{\tau} \approx J^h_{\tau}(l) = \sum^N_{i=1} J_{\tau i} \phi_i(l) \ \in V_h
\end{equation}
\begin{equation}\label{eq_sum_basis_func_M}
 M_{\nu} \approx M^h_{\nu}(l) = \sum^N_{i=1} M_{\nu i} \psi_i(l) \ \in W_h
\end{equation}
\begin{equation}\label{eq_sum_basis_func_X}
 X \approx X^h(l) = \sum^N_{i=1} X_i \psi_i(l) \ \in W_h
\end{equation}
and
\begin{equation}\label{eq_sum_basis_func_Y}
 Y \approx Y^h(l) = \sum^N_{i=1} Y_i \psi_i(l) \ \in W_h .
\end{equation}
where $\phi_i \in V_h$ and $\psi_i \in W_h$

So the bilinear form $A(U, \Psi)$ in (\ref{Dis_form_A}) can be written as
$$A(U^h, \Psi^h) = iZ_0 \sum_{i,j = 1}^N \left( \iint_{C^h} kG\ \phi_j\ \phi_i\ [\vec{\tau}_j(l')\cdot\vec{\tau}_i(l)] - \frac{1}{k}G\ d'_l \phi_j\ d_l \phi_i\ dl'dl \right) J^h_{\tau j} $$
$$+ \sum_{i,j = 1}^N \left( \iint_{C^h} \psi_j\ \phi_i\ \mathbf{n}_i\cdot\nabla_{\Gamma} G dl'dl \right) M^h_{\nu j} $$
$$+ \sum_{i,j = 1}^N \left( \iint_{C^h} \phi_j\ \psi_i\ \mathbf{n}_j\cdot\nabla_{\Gamma} G dl'dl \right) J^h_{\tau j}
+ \frac{i}{Z_0} \sum_{i,j = 1}^N \left( \iint_{C^h} kG\ \psi_j\ \psi_i\ dl'dl \right) M^h_{\nu j} $$
$$+ \frac{a_0}{2} \sum_{i,j = 1}^N \left( \int_{C^h} \phi_j\ \phi_i dl \right) J^h_{\tau j} + \frac{1}{2 a_0} \sum_{i,j = 1}^N \left( \int_{C^h} \psi_j\ \psi_i dl \right) M^h_{\nu j} $$
$$ + \frac{a_1}{2} \sum_{i,j = 1}^N \left( \int_{C^h} d_l\psi_j\ \phi_i dl \right) X^h_j + \frac{b_1}{2} \sum_{i,j = 1}^N \left( \int_{C^h} d_l\psi_j\ \phi_i dl \right) Y^h_j $$
$$+ \frac{b_1}{2 a_0} \sum_{i,j = 1}^N \left( \int_{C^h} d_l\psi_j\ \psi_i dl \right) Y^h_j + \frac{a_1}{2 a_0} \sum_{i,j = 1}^N \left( \int_{C^h} d_l\psi_j\ \psi_i dl \right) X^h_j $$
$$+ \sum_{i,j = 1}^N \left( \int_{C^h} \psi_j\ \psi_i dl \right) X^h_j - \sum_{i,j = 1}^N \left( \int_{C^h} d_l\psi_j\ \psi_i dl \right) J^h_{\tau j} $$
$$+ \sum_{i,j = 1}^N \left( \int_{C^h} \psi_j\ \psi_i dl \right) Y^h_j - \sum_{i,j = 1}^N \left( \int_{C^h} d_l\psi_j\ \psi_i dl \right) M^h_{\nu j} $$

In order to simplify the equations in matrix form we define the following matrices
$$(B-S)_{ij} = i \iint_{C^h} k G(l,l')\ \phi_j(l')\ \phi_i\ [\boldsymbol{\tau}_j\cdot\boldsymbol{\tau}_i] - \frac{1}{k} G(l,l')\ d'_l \phi_j(l')\ d_l \phi_i(l)\ dl' dl$$
$$Q_{ij} = \iint_{C^h} \phi_i(l)\ \psi_j(l')\ \mathbf{n}_i \cdot \nabla_{\Gamma} G(l,l')\ dl' dl $$
$$B_{ij} = i \iint_{C^h} k G(l,l')\ \psi_j(l')\ \psi_i(l)\ dl' dl $$
$$I1_{ij} = \int_{C^h} \phi_i(l)\ \phi_j(l)\ dl $$
$$I2_{ij} = \int_{C^h} \psi_i(l)\ \psi_j(l)\ dl $$
%
%
$$D1_{ij} = \int_{C^h} \phi_i(l)\ d_l\psi_j(l)\ dl $$
$$D3_{ij} = \int_{C^h} \psi_i(l)\ d_l\psi_j(l)\ dl $$
$$D5_{ij} = \int_{C^h} \psi_i(l)\ d_l\phi_j(l)\ dl $$
\textbf{Note}: $[I2]$ is the nonsingular diagonal matrix, therefore it is invertible.

So we can write our system in a matrix form:
\begin{equation}\label{eq_2d_matrix_1}
%
\left[
\begin{array}{cccc}
Z_0[B-S] + \frac{a_0}{2} [I1] & [Q] & \frac{a_1}{2}[D1] & \frac{b_1}{2}[D1] \\
{[Q]}^T & \frac{1}{Z_0} [B] + \frac{1}{2 a_0} [I2] & \frac{a_1}{2 a_0}[D3] & \frac{b_1}{2 a_0}[D3] \\
-[D5] & 0 & [I2] & 0 \\
0 & -[D3] & 0 & [I2]
\end{array}
\right]
\left (
\begin{array}{cccc}
\overline{J}^h \\ \overline{M}^h \\ \overline{X}^h \\ \overline{Y}^h
\end{array}
\right) = 
\left (
\begin{array}{cccc}
\overline{E}^h \\ \overline{H}^h \\ 0 \\ 0
\end{array}
\right)
%
\end{equation}
Where right-side vectors $\overline{E}^h$, $\overline{H}^h$ are defined as follows:
$$E_i^h = \int_{C^h} \mathbf{E}^{inc}\cdot\boldsymbol{\phi}_i dl ;\,\,\,\,\,\,\,\,\, H_i^h = \int_{C^h} \mathbf{H}^{inc}\cdot\boldsymbol{\psi}_i dl .$$
And vectors $\overline{J}^h$ and $\overline{M}^h$ are unknowns.
Now, we are interested in elimination of auxiliary variable.
From the last two lines in (\ref{eq_2d_matrix_1}), we get
$$- [D5]\ \overline{J}^h + [I2]\ \overline{X}^h = 0\ \rightarrow\ \overline{X}^h = {[I2]}^{-1}\ [D5]\ \overline{J}^h ;$$
$$- [D3]\ \overline{M}^h + [I2]\ \overline{Y}^h = 0\ \rightarrow\ \overline{Y}^h =  {[I2]}^{-1}\ [D3]\ \overline{M}^h .$$

We put $X, Y$ in first two equations.

\begin{equation}\label{eq_2d_matrix_2}
\left[
\begin{array}{cc}
[A1] & [A2] \\
{[A3]} & [A4]
\end{array}
\right]
\left(
\begin{array}{cc}
\overline{J}^h \\ \overline{M}^h 
\end{array}
\right) = 
\left(
\begin{array}{cc}
\overline{E}^h \\ \overline{H}^h 
\end{array}
\right)
\end{equation}
where matrices are defined as

$$[A1] = Z_0[B-S] + \frac{a_0}{2} [I1] + \frac{a_1}{2} [D1]\ {[I2]}^{-1}\ [D5] $$
$$[A2] = [Q] + \frac{b_1}{2}[D1]\ {[I2]}^{-1}\ [D3] $$
$$[A3] = {[Q]}^T + \frac{a_1}{2 a_0}[D3]\ {[I2]}^{-1}\ [D5] $$
$$[A4] = \frac{1}{Z_0} [B] + \frac{1}{2 a_0} [I2] + \frac{b_1}{2 a_0}[D3]\ {[I2]}^{-1}\ [D3] .$$

\subsection{Numerical tests 2D}

\begin{minipage}{0.45\textwidth}
\begin{figure}[H]
\centering
        \includegraphics[scale=0.31]{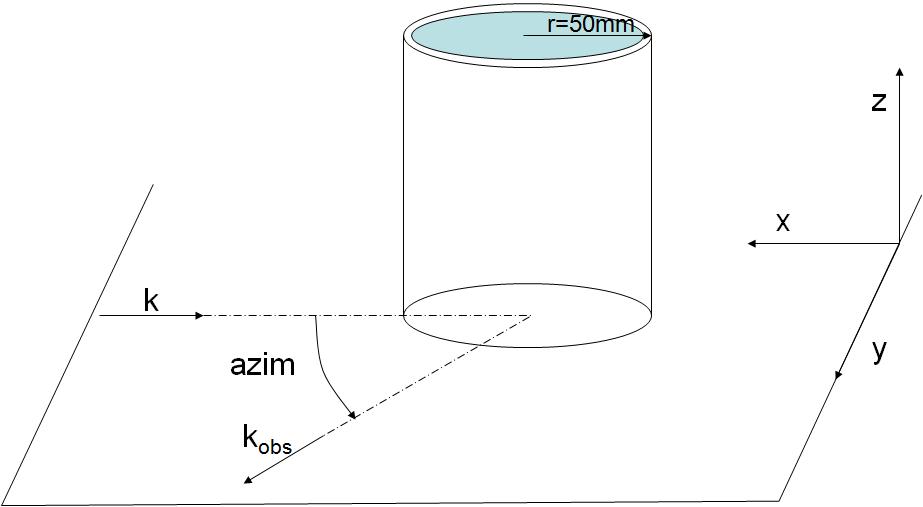}
        \caption{Cylinder}    
\label{fig:Cylinder}
    \end{figure}
\end{minipage}   
\begin{minipage}{0.45\textwidth}
\begin{figure}[H]
\centering
\includegraphics[scale=0.33]{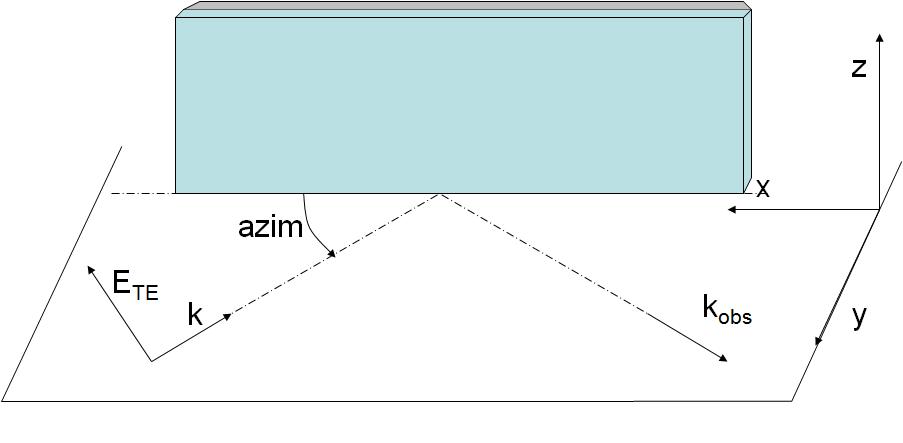}
 \caption{Plate with thin layer}
  \label{fig:2Dplate}
\end{figure}
\end{minipage}

In order to illustrate these points scattering by three typical coated cylinders (see \Cref{fig:Cylinder}) will be considered next.  \Cref{cyl_ep5_TE_TM} show the monostatic RCS 
for a coated conducting cylinder with inner radius $\lambda_0$, coating thickness $d = 0.1\lambda_0$, and coating parameters $\epsilon_r = 4.0 - 0.5i$ and $\mu_r = 1.0$. The exact series solution is presented along with the HOIBC and SIBC solutions. We computed monostatic RCS for different frequencies to see how do results depend on frequency. In TE-polarization 
we can see that results of SIBC jumps in range between $6GHz$ and $8GHz$ (see \Cref{cyl_ep5_TE}). Much bigger difference, we can see in TM-polarization 
between $7GHz$ and $9GHz$ (see fig. {\ref{cyl_ep5_TM}}).
\begin{figure}[H]
\begin{minipage}{0.45\textwidth}
\begin{figure}[H]
\centering
        \includegraphics[scale=0.52]{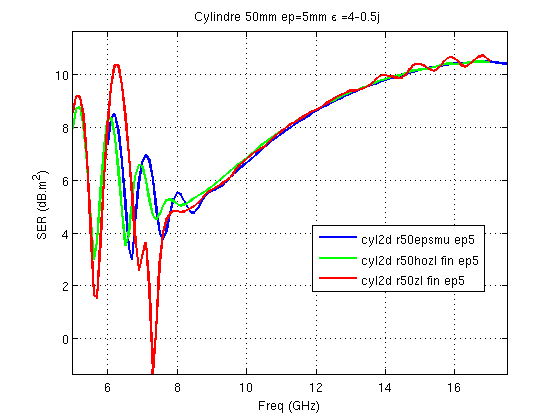}
\subcaption{TE polarization}
\end{figure}
\end{minipage}   
\begin{minipage}{0.45\textwidth}
\begin{figure}[H]
\centering
\includegraphics[scale=0.5]{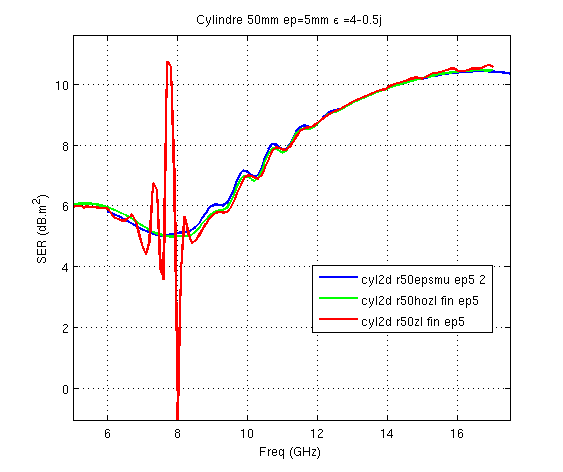}
 \subcaption{TM polarization}
\end{figure}
\end{minipage}
 \caption{Monostatic RCS for a coated circular cylinder} 
 \label{cyl_ep5_TE_TM}
\end{figure}
Next, we consider bistatic RCS for different scattering angles. \Cref{cyl_ep1p5_TM_TE} show the bistatic radar cross section for a coated conducting cylinder with inner radius $\lambda_0$, coating thickness $d = 1.5mm$, and coating parameters $\epsilon_r = 10 - 5i$ and $\mu_r = 1$, for fixed frequency $f = 6.8GHz$ in TE and TM polarizations. The exact series solution is presented along with the IBC0 and HOIBC order 1 and order 2 solutions.

\begin{figure}[H]
\begin{minipage}{0.45\textwidth}
\begin{figure}[H]
\centering
\includegraphics[scale=0.4]{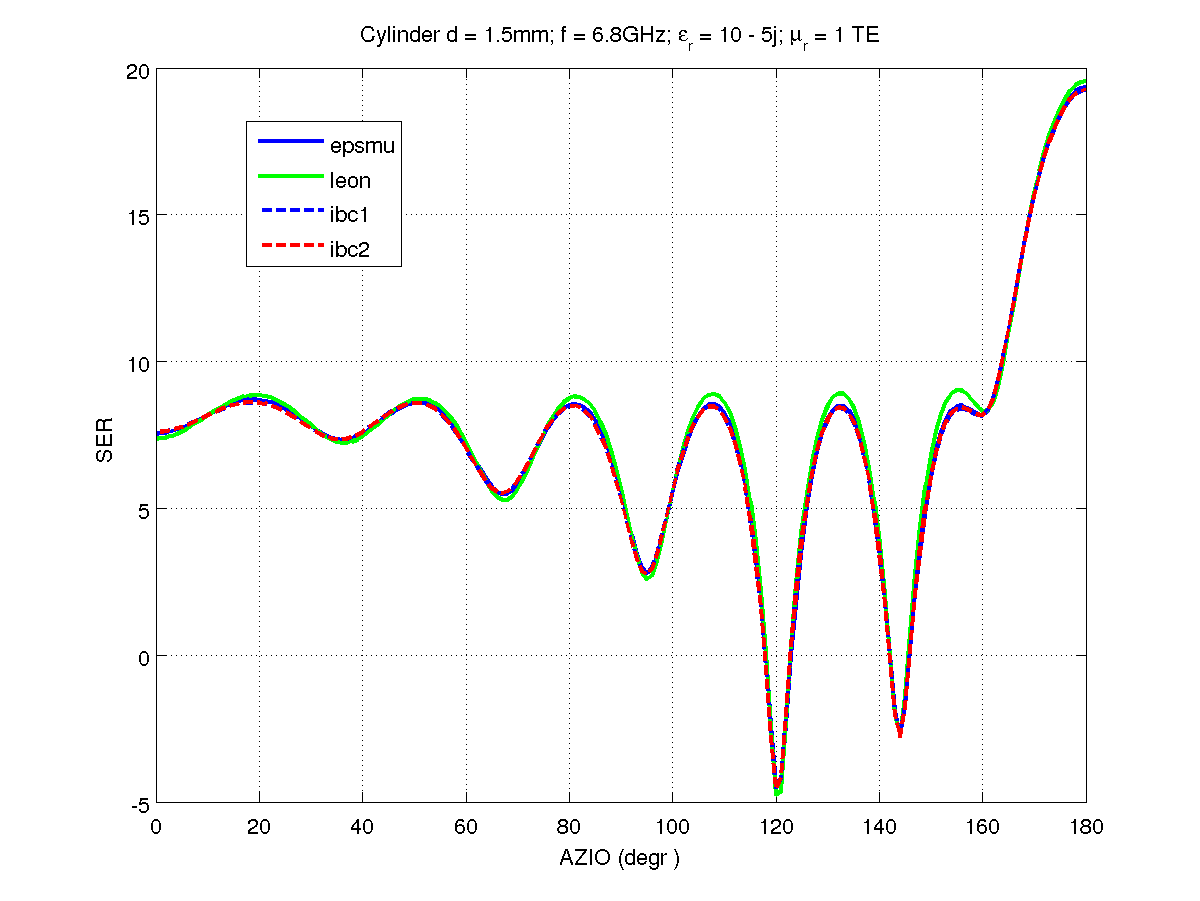}
 \subcaption{TE polarization}
\end{figure}
\end{minipage}   
\begin{minipage}{0.45\textwidth}
\begin{figure}[H]
\centering
\includegraphics[scale=0.4]{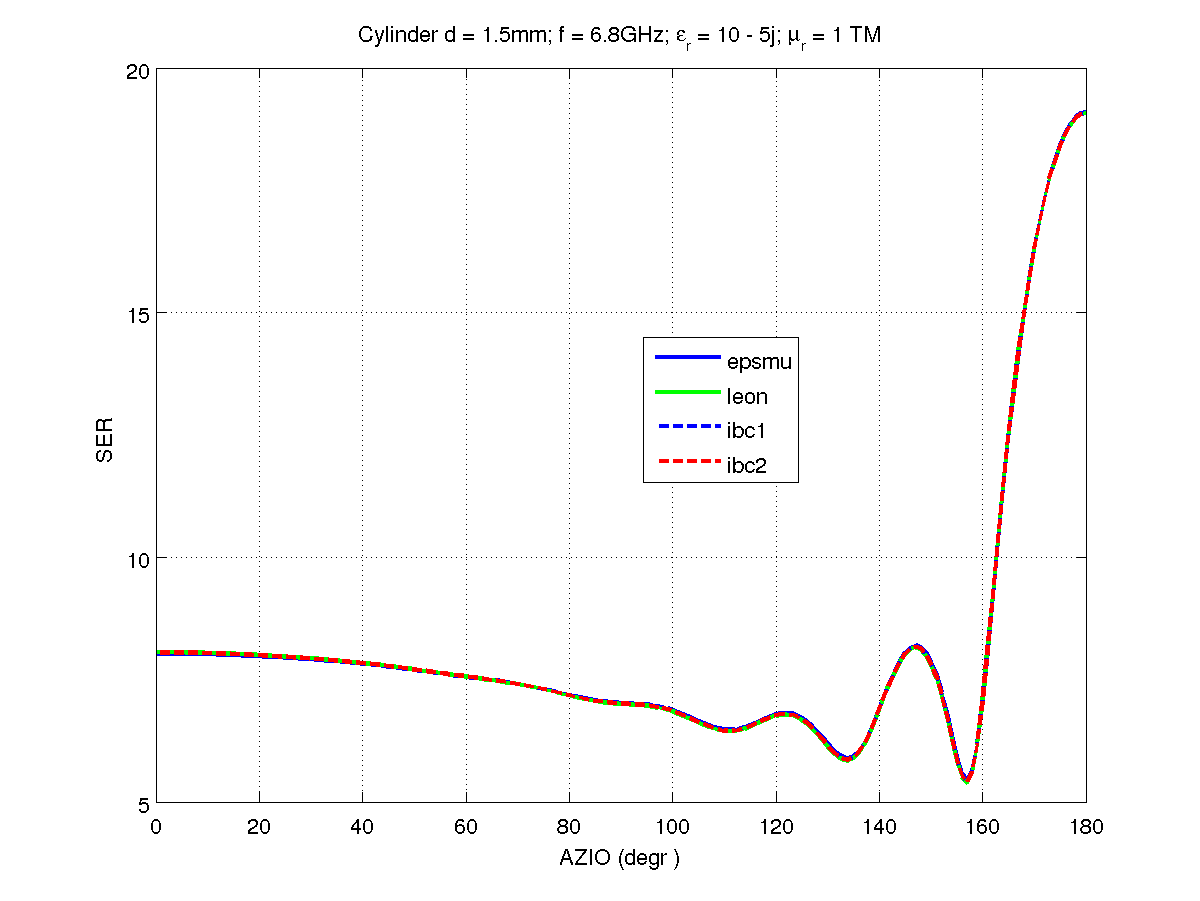}
 \subcaption{TM polarization}
\end{figure}
\end{minipage}
 \caption{Bistatic RCS for a coated circular cylinder with $d = 1.5mm$, $\epsilon_r = 10 - 5i$, $\mu_r = 1$ and $f = 6.8GHz$}
  \label{cyl_ep1p5_TM_TE}
\end{figure}

After we increase thickness of a boundary and decrease frequency, so we considered bistatic RCS for different scattering angles. \Cref{cyl_ep3_TE_TM} shows the bistatic radar cross section for a coated conducting cylinder with inner radius $\lambda_0$, coating thickness $d = 3mm$ and frequency $f = 3.4GHz$, coating parameters $\epsilon_r = 10 - 5i$ and $\mu_r = 1.0$, in TE and TM polarizations. The exact series solution is presented along with the SIBC and HOIBC order 1 and order 2 solutions.

\begin{figure}[H]
\begin{minipage}{0.45\textwidth}
\begin{figure}[H]
\centering
\includegraphics[scale=0.4]{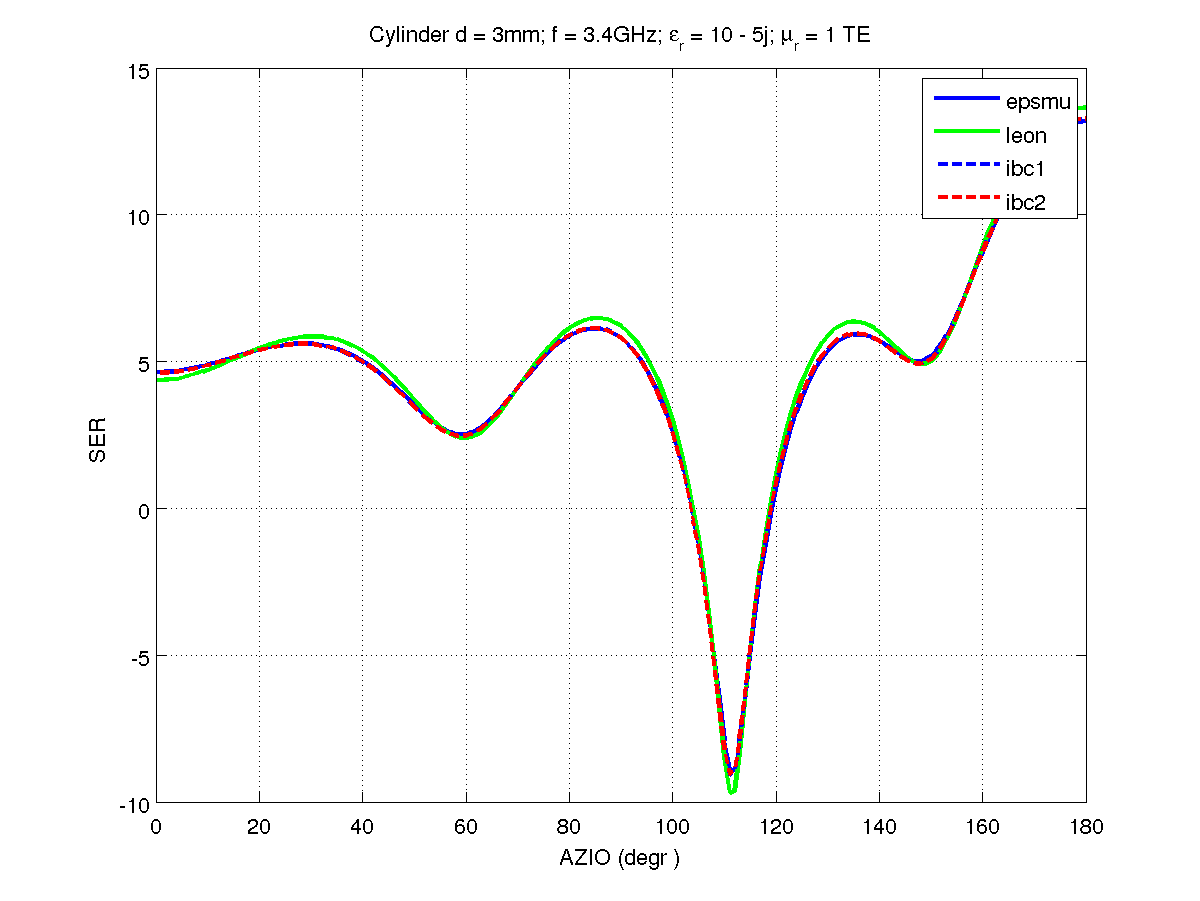}
 \subcaption{TE polarization}
\end{figure}
\end{minipage}   
\begin{minipage}{0.45\textwidth}
\begin{figure}[H]
\centering
\includegraphics[scale=0.38]{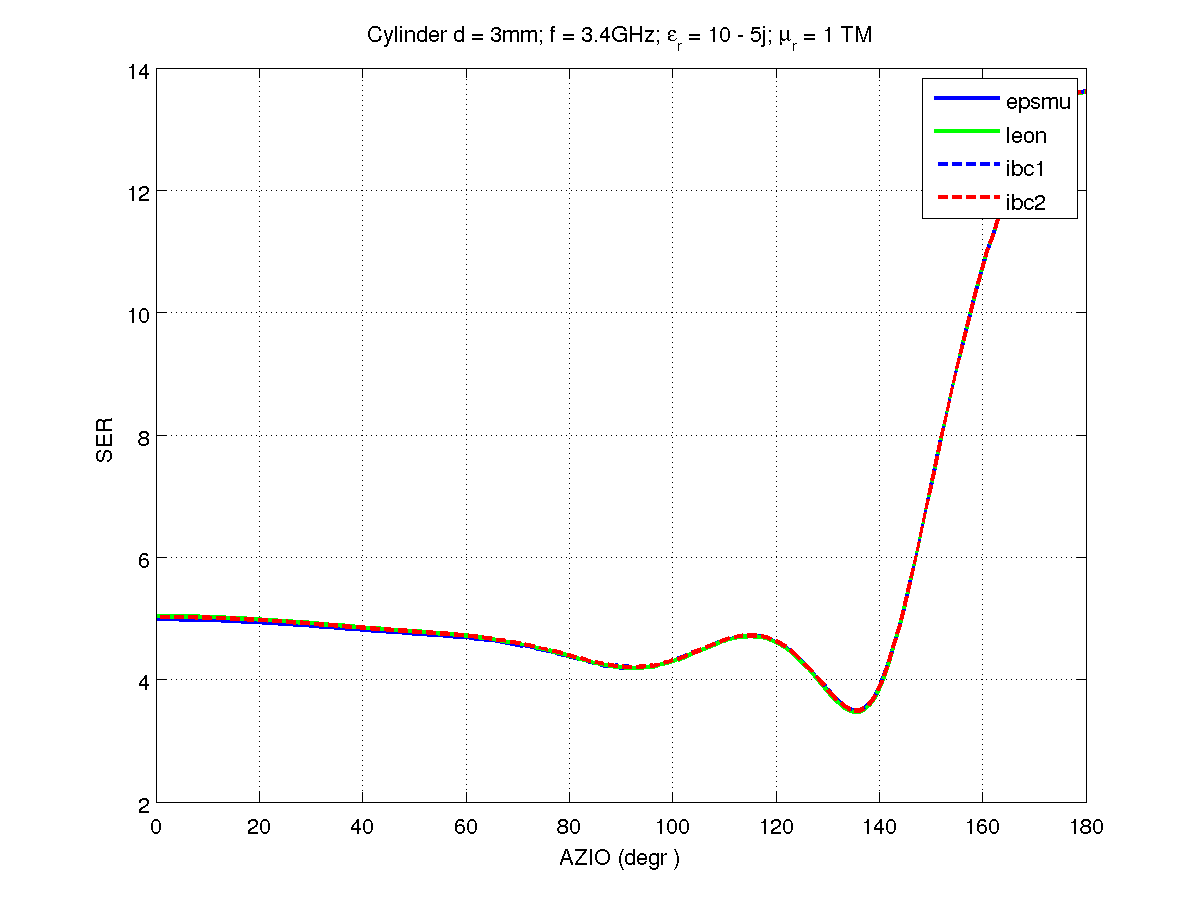}
 \subcaption{TM polarization}
\end{figure}
\end{minipage}
\caption{Bistatic RCS for a coated circular cylinder with TM polarization}
\label{cyl_ep3_TE_TM}
\end{figure}
Here we comput bistatic RCS for coated circular cylinder with parameters, $d = 0.1\lambda_0, \epsilon_r = 4-0.5i$ and $\mu_r = 1$. And we compare to Rahmat-Samii results for same test. The backscatter direction is $\phi = 180^{\circ} $. Results for exact formulation, IBC0 or Leontovich IBC formulation and the formulation based on the planar higher order IBC are presented in the figure {\ref{comp_cyl_hoppe}}. As can be seen in the figure, the results using the planar HOIBC are in excellent agreement with the exact solution over most of the angular range, while IBC0 solutions give only the average behavior of the scattered field.

\begin{figure}[H]
  \begin{center}
    \includegraphics[width=0.7\textwidth]{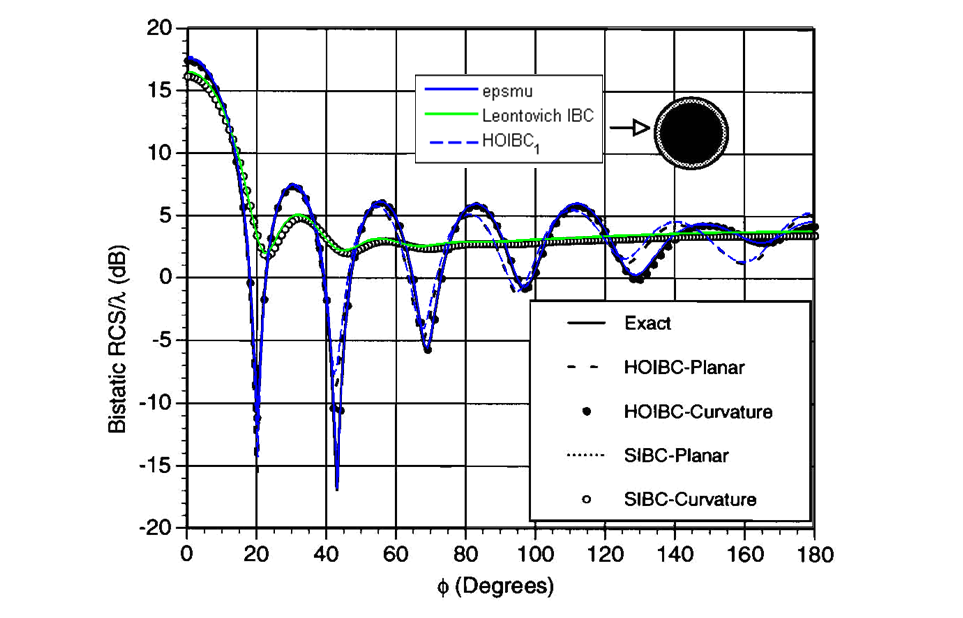}
  \end{center}
  \caption{Bistatic RCS for a coated circular cylinder, when $d = 0.1\lambda_0$, $\epsilon_r = 4-0.5j$, $\mu_r = 1$ with TE polarization}
  \label{comp_cyl_hoppe}
\end{figure}

Next we consider conducting plate with open boundary thin dielectric layer (see fig. \ref{fig:2Dplate}). Figures \ref{2Dpl_6p8_te}-\ref{2Dpl_6p8_tm} show the bistatic RCS for layer thickness $d = 4mm$ and frequency $f = 6.8GHz$. This example is interesting because it shows that method works even for open boundaries. And we can see that it solves problem much better than with Leontovich IBC. But it is difficult to see difference between first order and second order IBCs.
\begin{figure}[H]
\begin{minipage}{0.45\textwidth}
\begin{figure}[H]
\centering
\includegraphics[scale=0.4]{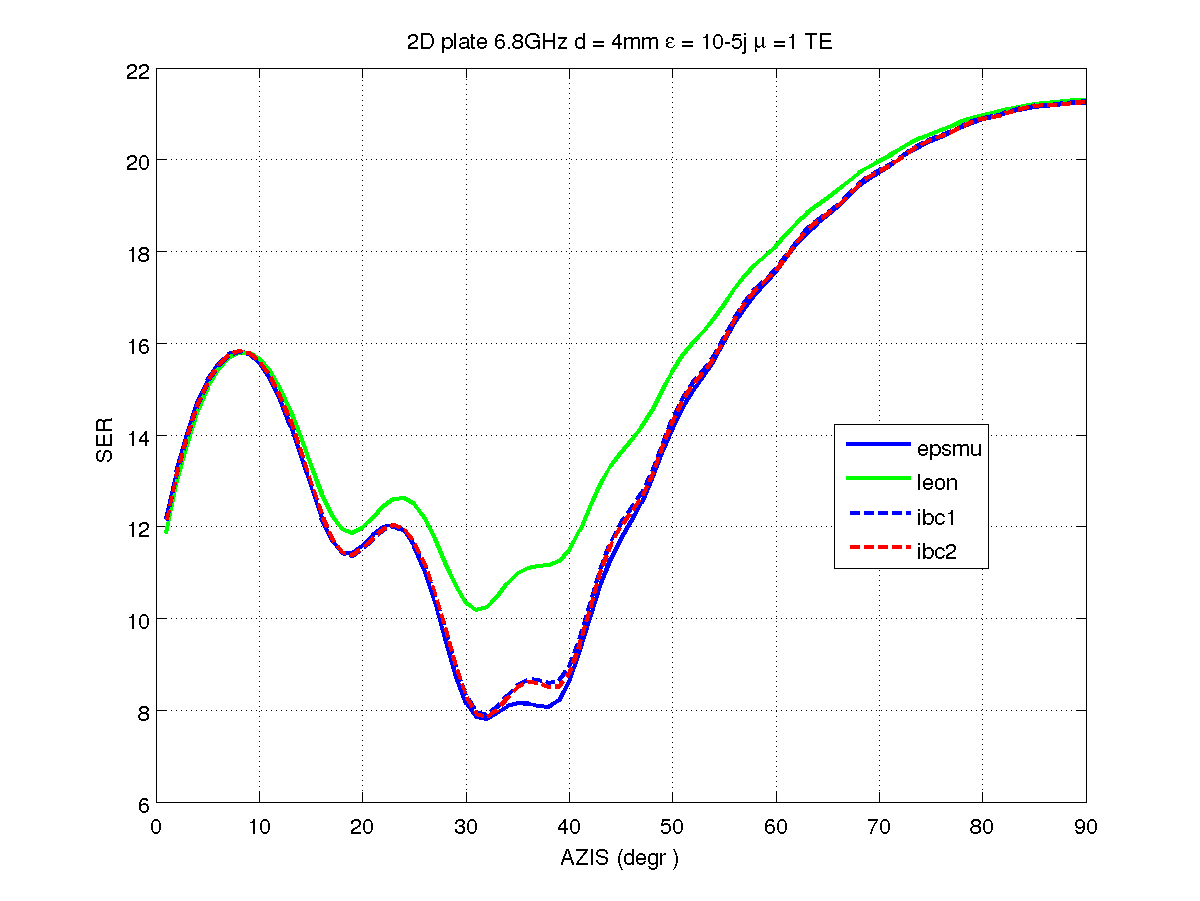}
 \subcaption{TE polarization}
  \label{2Dpl_6p8_te}
\end{figure}
\end{minipage}   
\begin{minipage}{0.45\textwidth}
\begin{figure}[H]
\centering
\includegraphics[scale=0.38]{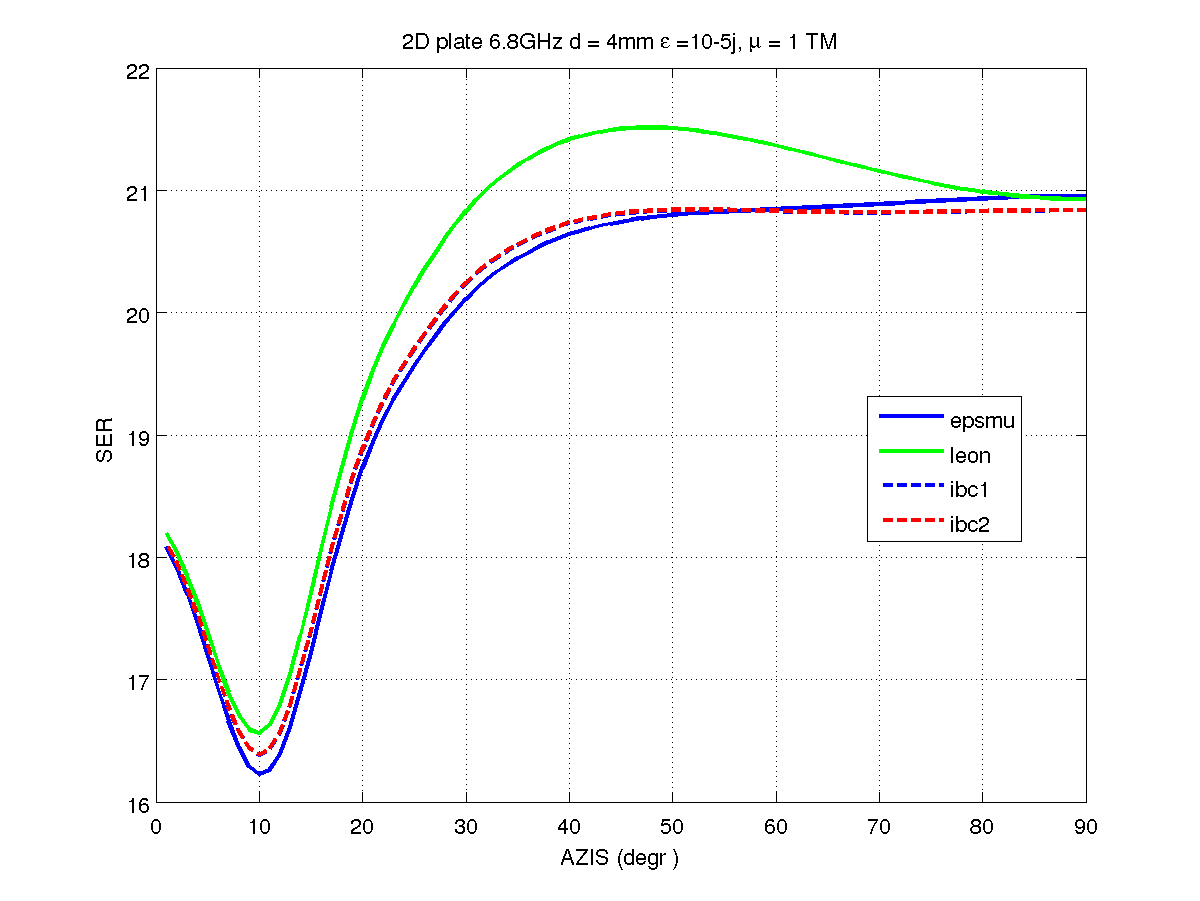}
\subcaption{TM polarization}
  \label{2Dpl_6p8_tm}
\end{figure}
\end{minipage}
 \caption{Bistatic RCS for a coated 2D plate }
\end{figure}
Then we propose a new variational formulation where electric and magnetic currents are the principal unknowns and in which we sum the weak form of the HOIBC with the Electric Field Integral Equation (EFIE) and use the Magnetic Field Integral Equation (MFIE) as a second equation.

 \section{Variational formulation for 3D problem }\label{variational_formulation}



Moreover, we have the boundary condition on $\Gamma$:
\begin{equation}\label{Z_3D_boxed}
(I + b_1 L_D - b_2 L_R) (\mathbf{n} \times \textbf{M}) = (a_0 I + a_1 L_D - a_2 L_R)\mathbf{J}.
\end{equation}
We sum the weak form of the HOIBC with the EFIE and use the MFIE as in (\ref{EFIE}) and (\ref{MFIE}) and using properties of vector analysis \cite{Van_Bladel_2007},
we get $$A((\mathbf{J}, \mathbf{M}), (\boldsymbol{\Psi}_J, \boldsymbol{\Psi}_M))  = F((\boldsymbol{\Psi}_J, \boldsymbol{\Psi}_M)) $$
where
$$ F((\boldsymbol{\Psi}_J, \boldsymbol{\Psi}_M)) = \int_{\Gamma} \mathbf{E}^{inc} \cdot \boldsymbol{\Psi}_J ds + \int_{\Gamma} \mathbf{H}^{inc} \cdot \boldsymbol{\Psi}_M ds$$
and
 $$A((\mathbf{J}, \mathbf{M}), (\boldsymbol{\Psi}_J, \boldsymbol{\Psi}_M)) = <Z_0(B-S) \textbf{J}, \boldsymbol{\Psi}_J > + \frac{1}{Z_0} < (B-S) \textbf{M}, \boldsymbol{\Psi}_M > $$
 $$ + <Q\textbf{M}, \boldsymbol{\Psi}_J> - <Q\textbf{J}, \boldsymbol{\Psi}_M > + \frac{a_0}{2} < \textbf{J}, \boldsymbol{\Psi}_J>
 + \frac{1}{2 a_0} < \textbf{M}, \boldsymbol{\Psi}_M > $$
 $$ - \frac{a_1}{2} < \div_{\Gamma} \textbf{J}, \div_{\Gamma} \boldsymbol{\Psi}_J >
 - \frac{a_2}{2} < \div_{\Gamma}(\mathbf{n} \times \textbf{J} ), \div_{\Gamma}(\mathbf{n} \times \boldsymbol{\Psi}_J) >$$
 $$ + \frac{b_1}{2} < \div_{\Gamma}(\mathbf{n} \times \textbf{M}), \div_{\Gamma} \boldsymbol{\Psi}_J>
 - \frac{b_2}{2} < \div_{\Gamma}\textbf{M}, \div_{\Gamma}(\mathbf{n} \times \boldsymbol{\Psi}_J) > $$
 $$- \frac{b_1}{2 a_0} < \div_{\Gamma}(\mathbf{n} \times \textbf{M}), \div_{\Gamma}(\mathbf{n} \times \boldsymbol{\Psi}_M ) >
 - \frac{b_2}{2 a_0} <\div_{\Gamma} \textbf{M}, \div_{\Gamma} \boldsymbol{\Psi}_M >$$
 $$+ \frac{a_1}{2 a_0} < \div_{\Gamma} \textbf{J}, \div_{\Gamma}(\mathbf{n} \times \boldsymbol{\Psi}_M) >
 - \frac{a_2}{2 a_0} < \div_{\Gamma}(\mathbf{n} \times \textbf{J}), \div_{\Gamma} \boldsymbol{\Psi}_M > $$
 %

 As $\mathbf{A} \in H^{-1/2}(\div, \Gamma)$, the  $\rot_{\Gamma}\mathbf{A}$ and $\div_{\Gamma}(\mathbf{n}\times\mathbf{A})$ is problematic. To overcome
this difficulty
 we introduce Lagrange multipliers $\boldsymbol{\lambda}_J $ and $\boldsymbol{\lambda}_M$ as in \cite{Bendali_OCT_1999}:
 %
 $$\int_{\Gamma} \boldsymbol{\lambda}_J \cdot (\tilde{\boldsymbol{\Psi}}_J - \mathbf{n}\times\boldsymbol{\Psi}_J) ds = 0 \;\; \forall (\tilde{\boldsymbol{\Psi}}_J,(\boldsymbol{\Psi}_J), $$
 $$\int_{\Gamma} \boldsymbol{\lambda}_M \cdot (\tilde{\boldsymbol{\Psi}}_M - \mathbf{n}\times\boldsymbol{\Psi}_M) ds = 0, \;\; \forall (\tilde{\boldsymbol{\Psi}}_M,(\boldsymbol{\Psi}_M),$$
 and we have:
 $$\int_{\Gamma} \boldsymbol{\lambda}'_J \cdot (\tilde{\mathbf{J} } - \mathbf{n}\times\mathbf{J}) ds = 0 \;\; \boldsymbol{\lambda}'_J  $$
 $$\int_{\Gamma} \boldsymbol{\lambda}'_M \cdot (\tilde{\mathbf{M}} - \mathbf{n}\times \mathbf{M}) ds = 0 \;\; \forall \boldsymbol{\lambda}'_M  $$
 %
 Finally, we have the following problem:
\begin{problem}\label{prob_3D}
 Find $U = (\mathbf{J}, \mathbf{M}, \tilde{\mathbf{J}}, \tilde{\mathbf{M}}) \in V = [H^{-1/2}(div, \Gamma) \cap L^2(\Gamma)]^4$ and $\lambda
 = (\boldsymbol{\lambda}_J, \boldsymbol{\lambda}_M) \in [H^{-1/2} (\Gamma)]^2$ such that 

 \begin{equation}\label{eq_prob_3D}
 \begin{cases}
 A(U, \Psi) + B^T(\lambda, \Psi) = F(\Psi) \\
   B(U, \lambda') = 0
 \end{cases}
 \end{equation}
 for all $\Psi = (\boldsymbol{\Psi}_J, \boldsymbol{\Psi}_M, \tilde{\boldsymbol{\Psi}}_J, \tilde{\boldsymbol{\Psi}}_M) \in V = [H^{-1/2}(div,
 \Gamma) \cap L^2(\Gamma)]^4$ and $\lambda' = (\boldsymbol{\lambda}'_J, \boldsymbol{\lambda}'_M) \in W = [H^{-1/2} (\Gamma)]^2$.
 \end{problem}
\noindent The bilinear forms are defined by:
 $$ B(U, \lambda') = \int_{\Gamma} \boldsymbol{\lambda}'_J \cdot (\tilde{\mathbf{J} } - \mathbf{n}\times\mathbf{J}) ds
 + \int_{\Gamma} \boldsymbol{\lambda}'_M \cdot (\tilde{\mathbf{M}} - \mathbf{n}\times \mathbf{M}) ds $$
 and
 \begin{equation}\label{oper_A_3d}
  A(U, \Psi) = iZ_0\iint_{\Gamma} kG\ (\textbf{J} \cdot \boldsymbol{\Psi}_J) - \frac{1}{k} G\ \div{\boldsymbol{\Psi}_J}\
  \div{\textbf{J}} ds ds'
 \end{equation}
 %
 %
 $$+ \frac{i}{Z_0} \iint_{\Gamma} kG\ (\boldsymbol{\Psi}_M \cdot \textbf{M}) - \frac{1}{k} G\ \div{\boldsymbol{\Psi}_M}\
 \div{\textbf{M}} dsds' $$
 $$+\iint_{\Gamma} \nabla'G \cdot (\boldsymbol{\Psi}_J \times \textbf{M}) dsds' - i \iint_{\Gamma} \nabla'G
 \cdot (\boldsymbol{\Psi}_M \times \textbf{J}) dsds'$$
 $$+ \frac{a_0}{2} \int_{\Gamma} \textbf{J} \cdot \boldsymbol{\Psi}_J ds + \frac{1}{2 a_0} \int_{\Gamma} \textbf{M}
 \cdot \boldsymbol{\Psi}_M ds $$
 $$ - \frac{a_1}{2} \int_{\Gamma} \div_{\Gamma}\textbf{J}\ \div_{\Gamma}\boldsymbol{\Psi}_J ds  - \frac{a_2}{2}
 \int_{\Gamma} \div_{\Gamma}\tilde{\textbf{J}}\ \div_{\Gamma} \tilde{\boldsymbol{\Psi}}_J ds$$
 $$ + \frac{b_1}{2} \int_{\Gamma} \div_{\Gamma}\tilde{\textbf{M}}\ \div_{\Gamma}\boldsymbol{\Psi}_J ds
 - \frac{b_2}{2} \int_{\Gamma} \div_{\Gamma}\textbf{M}\ \div_{\Gamma}\tilde{\boldsymbol{\Psi}}_J ds $$
 $$- \frac{b_1}{2 a_0} \int_{\Gamma} \div_{\Gamma}\tilde{\textbf{M}}\ \div_{\Gamma}\tilde{\boldsymbol{\Psi}}_M ds
 - \frac{b_2}{2 a_0} \int_{\Gamma} \div_{\Gamma}\textbf{M}\ \div_{\Gamma}\boldsymbol{\Psi}_M ds$$
 $$+ \frac{a_1}{2 a_0} \int_{\Gamma} \div_{\Gamma}\textbf{J}\ \div_{\Gamma}\tilde{\boldsymbol{\Psi}}_M ds
 - \frac{a_2}{2 a_0} \int_{\Gamma} \div_{\Gamma}\tilde{\textbf{J}}\ \div_{\Gamma}\boldsymbol{\Psi}_M ds. $$

 In the next section, we study mathematically this problem.

 \subsection{Existence and uniqueness theorem}\label{Existence_uniqueness}
We use a theorem from \cite{JCN} to prove existence and uniqueness of a solution to the problem \ref{prob_3D}. In the fisrt time we prove the following result.
 \begin{lem}\label{lemma_cont_A}
 The operator $A$ is continuous on $V\times V$ for all $\Psi \in V$ and we have that
  $$ | A(U, \Psi) |  \leq C \|U\|_V \| \Psi \|_V$$
 \end{lem}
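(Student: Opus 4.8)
The plan is to split the bilinear form (\ref{oper_A_3d}) according to the nature of its terms and to bound each group separately by $\|U\|_V\|\Psi\|_V$, after which the triangle inequality furnishes the constant $C$. I write $A=A_{BS}+A_Q+A_{\mathrm{id}}+A_{\mathrm{div}}$, where $A_{BS}$ gathers the two single-layer contributions built on $(B-S)$, $A_Q$ the two contributions built on $\nabla'G$, $A_{\mathrm{id}}$ the two $L^2$ mass terms weighted by $a_0/2$ and $1/(2a_0)$, and $A_{\mathrm{div}}$ the eight surface-divergence terms weighted by $a_1,a_2,b_1,b_2$.

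For $A_{BS}$ I would recognise its two terms as constant multiples of the duality pairings $\langle(B-S)\mathbf J,\boldsymbol{\Psi}_J\rangle$ and $\langle(B-S)\mathbf M,\boldsymbol{\Psi}_M\rangle$. By Theorem~\ref{thm_oper_BmS}, $(B-S)$ maps $H^{-1/2}(\div,\Gamma)$ continuously into $H^{-1/2}(\rot,\Gamma)$, and by the duality between $H^{-1/2}(\div,\Gamma)$ and $H^{-1/2}(\rot,\Gamma)$ recalled in the Proposition above, each pairing is bounded by $\|(B-S)\mathbf J\|_{-1/2,\rot_{\Gamma}}\|\boldsymbol{\Psi}_J\|_{-1/2,\div_{\Gamma}}\le C\|\mathbf J\|_{-1/2,\div_{\Gamma}}\|\boldsymbol{\Psi}_J\|_{-1/2,\div_{\Gamma}}$. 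For $A_Q$ I would proceed identically, reading its two terms as constant multiples of $\langle Q\mathbf M,\boldsymbol{\Psi}_J\rangle$ and $\langle Q\mathbf J,\boldsymbol{\Psi}_M\rangle$ and invoking the continuity of $Q$ from $H^{-1/2}(\div,\Gamma)$ to $H^{-1/2}(\rot,\Gamma)$ (Theorem~\ref{thm_oper_Q}) with the same duality. Since the $H^{-1/2}(\div,\Gamma)$-norm of each component is by definition bounded by $\|U\|_V$, both groups are $\le C\|U\|_V\|\Psi\|_V$.

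The mass and divergence groups are elementary once the norm on $V$ is fixed: I would equip $V=[H^{-1/2}(\div,\Gamma)\cap L^2(\Gamma)]^4$ with the graph norm controlling, for every component $U_i$, the three quantities $\|U_i\|_{-1/2,\div_{\Gamma}}$, $\|U_i\|_{L^2(\Gamma)}$ and $\|\div_{\Gamma}U_i\|_{L^2(\Gamma)}$. Then $A_{\mathrm{id}}$ is dispatched by the Cauchy--Schwarz inequality in $L^2(\Gamma)$, giving $|\tfrac{a_0}{2}\int_{\Gamma}\mathbf J\cdot\boldsymbol{\Psi}_J\,ds|\le\tfrac{|a_0|}{2}\|\mathbf J\|_{L^2(\Gamma)}\|\boldsymbol{\Psi}_J\|_{L^2(\Gamma)}$ and similarly for the $1/(2a_0)$ term. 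Each of the eight summands of $A_{\mathrm{div}}$ pairs the surface divergence of one component of $U$ with that of one component of $\Psi$ --- here the rotated fields occur as the independent unknowns $\tilde{\mathbf J},\tilde{\mathbf M}$ introduced through the Lagrange constraints, hence are genuine components of $V$ --- so Cauchy--Schwarz gives, for instance, $|\tfrac{a_1}{2}\int_{\Gamma}\div_{\Gamma}\mathbf J\,\div_{\Gamma}\boldsymbol{\Psi}_J\,ds|\le\tfrac{|a_1|}{2}\|\div_{\Gamma}\mathbf J\|_{L^2(\Gamma)}\|\div_{\Gamma}\boldsymbol{\Psi}_J\|_{L^2(\Gamma)}$. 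Summing the eight constants and adding the bounds for $A_{BS},A_Q,A_{\mathrm{id}}$ yields the claim with $C$ depending only on $Z_0$, the coefficients $a_0,a_1,a_2,b_1,b_2$ and the operator norms of $(B-S)$ and $Q$.

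I expect the one genuinely delicate point to be the divergence group $A_{\mathrm{div}}$: it is bounded only if the norm of $V$ dominates $\|\div_{\Gamma}U_i\|_{L^2(\Gamma)}$, which is furnished neither by $H^{-1/2}(\div,\Gamma)$ (which places $\div_{\Gamma}U_i$ only in $H^{-1/2}(\Gamma)$) nor by $L^2(\Gamma)$ alone. This is exactly the role of intersecting $L^2(\Gamma)$ into the space, and the argument must make the corresponding graph norm explicit; with that choice every estimate above is routine.
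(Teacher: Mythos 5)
Your proof is correct and follows the same overall strategy as the paper's (decompose $A$ and bound each group by $\|U\|_V\|\Psi\|_V$), but the execution is genuinely different. The paper's proof is essentially a citation: it splits $A=A_1+A_2+A_3$ ($A_1$ collecting the $(B-S)$, $Q$ and mass terms, $A_2$ the diagonal divergence terms, $A_3$ the cross divergence terms) and then invokes Theorems 2.2 and 4.6 of Lange's thesis \cite{TL} to obtain the two bounds $|A_1|\leq C_1\|U\|_V\|\Psi\|_V$ and $|A_2+A_3|\leq C_2\|U\|_V\|\Psi\|_V$, with no further detail. You instead give a self-contained argument built on the paper's own preliminaries: the mapping properties of $(B-S)$ and $Q$ (Theorems \ref{thm_oper_BmS} and \ref{thm_oper_Q}) combined with the duality between $H^{-1/2}(\div,\Gamma)$ and $H^{-1/2}(\rot,\Gamma)$ for the integral-operator terms, and Cauchy--Schwarz for the mass and divergence terms; this buys transparency and removes the dependence on an unpublished thesis. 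More importantly, you make explicit a point the paper passes over silently: the eight divergence terms are controlled only if the norm on $V=[H^{-1/2}(\div,\Gamma)\cap L^2(\Gamma)]^4$ dominates $\|\div_{\Gamma}U_i\|_{L^2(\Gamma)}$, which neither $H^{-1/2}(\div,\Gamma)$ (where $\div_{\Gamma}U_i$ lands only in $H^{-1/2}(\Gamma)$) nor $L^2(\Gamma)$ supplies on its own; your graph-norm choice is what makes the lemma (and the subsequent coercivity lemma, which also manipulates $\|\div_{\Gamma}\mathbf{J}\|_{L^2(\Gamma)}$) well posed. Your reading that $\tilde{\mathbf{J}},\tilde{\mathbf{M}}$ enter as independent components of $U$, so that no term of $A$ requires taking $\div_{\Gamma}(\mathbf{n}\times\cdot)$ of a constrained field, is also the correct interpretation of the Lagrange-multiplier formulation.
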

 \begin{proof}:
In order to prove this lemma, it is convenient to introduce a decomposition of $A$ as
 $A = A_1 + A_2 +A_3$\\
 where
 $$A_1(U, \Psi) =  <Z_0(B-S) \textbf{J}, \boldsymbol{\Psi}_J > + \frac{1}{Z_0} < (B-S) \textbf{M}, \boldsymbol{\Psi}_M > $$
 $$ + <Q\textbf{M}, \boldsymbol{\Psi}_J> - <Q\textbf{J}, \boldsymbol{\Psi}_M > + \frac{a_0}{2} < \textbf{J}, \boldsymbol{\Psi}_J>
 + \frac{1}{2 a_0} < \textbf{M}, \boldsymbol{\Psi}_M > $$
 $$A_2(U, \Psi) = - \frac{a_1}{2} < \div_{\Gamma} \textbf{J}, \div_{\Gamma} \boldsymbol{\Psi}_J >
 - \frac{a_2}{2} < \div_{\Gamma} \tilde{\textbf{J}}, \div_{\Gamma} \tilde{\boldsymbol{\Psi} }_J  >$$
 $$- \frac{b_1}{2 a_0} < \div_{\Gamma} \tilde{\textbf{M} }, \div_{\Gamma} \tilde{ \boldsymbol{\Psi} }_M >
 - \frac{b_2}{2 a_0} <\div_{\Gamma} \textbf{M}, \div_{\Gamma} \boldsymbol{\Psi}_M >$$
 and
 $$A_3(U, \Psi) = \frac{b_1}{2} < \div_{\Gamma} \tilde{\textbf{M}}, \div_{\Gamma} \boldsymbol{\Psi}_J>
 - \frac{b_2}{2} < \div_{\Gamma}\textbf{M}, \div_{\Gamma} \tilde{\boldsymbol{\Psi} }_J  > $$
 $$+ \frac{a_1}{2 a_0} < \div_{\Gamma} \textbf{J}, \div_{\Gamma} \tilde{\boldsymbol{\Psi} }_M >
 - \frac{a_2}{2 a_0} < \div_{\Gamma} \tilde{ \textbf{J} }, \div_{\Gamma} \boldsymbol{\Psi}_M > $$
Hence, we get by theorems $2.2$ and $4.6$ in \cite{TL}  that there exists constants $C_1$ and $C_2$ such that
\begin{equation}\label{A_1}	
| A_1(U, \Psi) | \leq C_1 \| U \|_V \| \Psi \|_V,
\end{equation}
and
\begin{equation}\label{A_2}
|A_2(U, \Psi) + A_3(U, \Psi)|\leq C_2 \| U \|_V \|\Psi \|_V
\end{equation}
Combining (\ref{A_1}) and (\ref{A_2}) we obtain:
  $$ | A(U, \Psi) | = |A_1 + A_2 + A_3| \leq |A_1(U, \Psi)| + |A_2(U, \Psi) + A_3(U, \Psi)| \leq C \|U\|_V \| \Psi \|_V$$
where $C = C_1 + C_2$. 
\end{proof}
  We have also the lemma.
\begin{lem}\label{lemma_coer_A}
The operator $A$ is coercive on $V$.
We have to show that there exist $\alpha > 0$ such that
 $$ \Re[A(U, U^*)] \geq \alpha \|U\|^2_V - C\|U\|^2_V, \ \forall U\in V.$$
 \end{lem}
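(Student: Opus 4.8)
The plan is to follow the same strategy as in the two-dimensional Lemma~\ref{lemma_coer_2d}: test with $\Psi=U^*$ and split $\Re[A(U,U^*)]$ according to the decomposition $A=A_1+A_2+A_3$ already used in the proof of Lemma~\ref{lemma_cont_A}. First I would treat $A_1$, which collects the integral operators together with the mass terms. Using the coercivity of $(B-S)$ from Theorem~\ref{thm_oper_BmS} on each of the $\mathbf J$ and $\mathbf M$ blocks, the continuity of $Q$ from Theorem~\ref{thm_oper_Q} to control the antisymmetric coupling $\langle Q\mathbf M,\mathbf J^*\rangle-\langle Q\mathbf J,\mathbf M^*\rangle$, and the results of \cite{Lange-1995} as in the planar case, I expect an estimate of the form
\[
\Re[A_1(U,U^*)] \geq \frac{\Re(a_0)}{2}\|\mathbf J\|^2_{L^2(\Gamma)} + \frac{\Re(a_0)}{2|a_0|^2}\|\mathbf M\|^2_{L^2(\Gamma)} + \gamma_1\left(\|\mathbf J\|^2_{H^{-1/2}(\div,\Gamma)} + \|\mathbf M\|^2_{H^{-1/2}(\div,\Gamma)}\right)
\]
with $\gamma_1>0$, so that $A_1$ supplies the positive control on the full $V$-norm up to lower-order ($L^2$) terms.

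Next I would evaluate the divergence blocks. Testing with $\Psi=U^*$ turns each diagonal term of $A_2$ into a squared norm, giving
\[
\Re[A_2(U,U^*)] = -\frac{\Re(a_1)}{2}\|\div_{\Gamma}\mathbf J\|^2_{L^2} - \frac{\Re(a_2)}{2}\|\div_{\Gamma}\tilde{\mathbf J}\|^2_{L^2} - \Re\!\left(\frac{b_1}{2a_0}\right)\|\div_{\Gamma}\tilde{\mathbf M}\|^2_{L^2} - \Re\!\left(\frac{b_2}{2a_0}\right)\|\div_{\Gamma}\mathbf M\|^2_{L^2}.
\]
For $A_3$ the terms come in conjugate pairs, one involving $(b_1,a_1)$ coupling $\tilde{\mathbf M}$ with $\mathbf J$ and one involving $(b_2,a_2)$ coupling $\mathbf M$ with $\tilde{\mathbf J}$; using $\Re(z)=\Re(\bar z)$ these collapse to
\[
\Re[A_3(U,U^*)] = \Re\!\left[\tfrac12\Big(b_1+\tfrac{a_1^*}{a_0^*}\Big)\langle\div_{\Gamma}\tilde{\mathbf M},\div_{\Gamma}\mathbf J^*\rangle\right] - \Re\!\left[\tfrac12\Big(b_2+\tfrac{a_2^*}{a_0^*}\Big)\langle\div_{\Gamma}\mathbf M,\div_{\Gamma}\tilde{\mathbf J}^*\rangle\right],
\]
which I would bound from below by a weighted Cauchy--Schwarz inequality, introducing quantities $q_j=|a_0|\,b_j + a_j^* a_0/|a_0|$ exactly as the scalar $q$ in the planar proof.

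Finally I would add the three contributions. The point is that the SUC relations for IBC1 in (\ref{SUC_1}), namely $\Re(a_j-b_j^* a_0)=0$, force $\Re(a_j)=\Re(b_j a_0^*)$ and hence $\Re(b_j/(2a_0))=\Re(a_j)/(2|a_0|^2)$; this makes the negative coefficients of $\Re[A_2]$ and the weighted bounds from $\Re[A_3]$ combine, for each $j$, into a single factor $\big(\Re(a_j)+\tfrac{|q_j|}{2}\big)$ multiplying a term $\|\div_{\Gamma}\,\cdot\,\|^2_{L^2}$. Under the analogue of condition (\ref{eq_ex&un_coef_2d}) this factor is nonpositive, so $\Re[A_2(U,U^*)]+\Re[A_3(U,U^*)]\geq 0$ and the highest-order divergence terms cannot spoil the estimate. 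Combined with the coercive lower bound for $A_1$, this yields the G\aa rding inequality $\Re[A(U,U^*)]\geq\alpha\|U\|^2_V - C\|U\|^2_{L^2(\Gamma)}$ claimed in the statement (the subtracted term being the lower-order $L^2$ norm, as in Lemma~\ref{lemma_coer_2d}).

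I expect the main obstacle to be the bookkeeping of the coupling between the genuine unknowns $(\mathbf J,\mathbf M)$ and the auxiliary variables $(\tilde{\mathbf J},\tilde{\mathbf M})=(\mathbf n\times\mathbf J,\mathbf n\times\mathbf M)$ enforced weakly through the Lagrange multipliers: one must keep the two polarization families $(a_1,b_1)$ (the $L_D$ part) and $(a_2,b_2)$ (the $L_R$ part) separate and verify that each satisfies its own SUC relation, while also checking that the antisymmetric $Q$-coupling in $A_1$ contributes no destabilizing real part. Controlling these cross terms so that the net coefficient on every $\|\div_{\Gamma}\,\cdot\,\|^2_{L^2}$ is nonpositive is the delicate step; once it is settled, the remaining estimate is a direct transcription of the planar argument.
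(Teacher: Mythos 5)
Your proposal follows essentially the same route as the paper's own proof: the same decomposition $A=A_1+A_2+A_3$, the lower bound for $A_1$ from \cite{TL}/\cite{Lange-1995}, the exact computation of $\Re[A_2]$, the weighted Cauchy--Schwarz bound on $\Re[A_3]$ via the quantities $q_j=|a_0|\,b_j+a_j^*a_0/|a_0|$, and the use of the SUC relation $\Re(a_j)=\Re(b_j^*a_0)$ together with the condition $\Re(a_j)+\tfrac{|q_j|}{2}=0$ to conclude $\Re[A_2]+\Re[A_3]\geq 0$. The argument and its bookkeeping of the $(a_1,b_1)$ and $(a_2,b_2)$ blocks match the paper's proof, so the proposal is correct and not a genuinely different approach.
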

\begin{proof}:
From \cite{TL}, it follow that there exists $\alpha$ such that
 $$\Re(A_1) = \Re(<Z_0(B-S)\mathbf{J}, \mathbf{J}^*>) + \Re( <Z^{-1}_0(B-S)\mathbf{M}, \mathbf{M}^* >) + \Re(<Q \mathbf{M}, \mathbf{J}^*>)$$
 $$ - \Re(<Q \mathbf{J}, \mathbf{M}^* >) + \Re(\frac{a_0}{2}\int_{\Gamma} \mathbf{J} \cdot \mathbf{J}^* ds) +
 \Re(\frac{1}{2 a_0}\int_{\Gamma} \mathbf{M} \cdot \mathbf{M}^* ds) $$
 $$ \geq \alpha \left( \|\mathbf{J}\|^2_{-1/2, \div_{\Gamma}} + \| \mathbf{M} \|^2_{-1/2, \div_{\Gamma}} \right) +
\frac{\Re(a_0)}{2} \|\mathbf{J}\|^2_{L^2(\Gamma)} + \frac{\Re(a_0)}{2 |a_0|^2} \|\mathbf{M}\|^2_{L^2(\Gamma)}. $$
We can easily show that
$$\Re(A_2)=- \frac{\Re(a_1)}{2} \| \div_{\Gamma} \mathbf{J} \|^2_{L^2(\Gamma)}
- \frac{\Re(a_2)}{2} \| \div_{\Gamma} \tilde{\mathbf{J}} \|^2_{L^2(\Gamma)} $$
$$- \frac{\Re(b_1 a^*_0)}{2 |a_0|^2} \| \div_{\Gamma} \tilde{\mathbf{M}} \|^2_{L^2(\Gamma)}
- \frac{\Re(b_2 a^*_0)}{2 |a_0|^2} \| \div_{\Gamma} \mathbf{M} \|^2_{L^2(\Gamma)} $$
For the rest, we do as follows:
 $$\Re(A_3) = \Re(\frac{b_1}{2}\int_{\Gamma} \div_{\Gamma}\tilde{\mathbf{M}}\ \div_{\Gamma} \mathbf{J}^* ds )
 - \Re(\frac{b_2}{2}\int_{\Gamma} \div_{\Gamma}\mathbf{M}\ \div_{\Gamma} \tilde{\mathbf{J}}^* ds )$$
 $$+ \Re(\frac{a_1}{2 a_0}\int_{\Gamma} \div_{\Gamma}\mathbf{J}\  \div_{\Gamma} \tilde{\mathbf{M}}^* ds )
 - \Re(\frac{a_2}{2 a_0}\int_{\Gamma} \div_{\Gamma}\tilde{\mathbf{J}}\ \div_{\Gamma}\mathbf{M}^* ds ) $$
 $$= \Re\left\{ \left( \frac{b_1}{2} + \frac{a^*_1}{2 a^*_0} \right) \int_{\Gamma} \div_{\Gamma}\tilde{\mathbf{M}}\
 \div_{\Gamma}\mathbf{J}^* ds \right\}- \Re\left\{ \left( \frac{b_2}{2} + \frac{a^*_2}{2 a^*_0} \right) \int_{\Gamma} \div_{\Gamma}\tilde{\mathbf{J}}^*\
 \div_{\Gamma}\mathbf{M} ds \right\}$$
 $$ = \Re \left\{ \int_{\Gamma} \frac{1}{|a_0|^{1/2}} \left( \frac{b_1}{2} + \frac{a^*_1 a_0}{2 |a_0|^2} \right)^{1/2}
 \div_{\Gamma} \tilde{\mathbf{M}}\ \cdot |a_0|^{1/2} \left( \frac{b_1}{2} + \frac{a^*_1 a_0}{2 |a_0|^2} \right)^{1/2} \div_{\Gamma} \mathbf{J}^* ds \right\} $$
 $$ - \Re \left\{ \int_{\Gamma} |a_0|^{1/2} \left( \frac{b_2}{2} + \frac{a^*_2 a_0}{2 |a_0|^2} \right)^{1/2} \div_{\Gamma}
 \tilde{\mathbf{J}}^* \cdot \frac{1}{|a_0|^{1/2}} \left( \frac{b_2}{2} + \frac{a^*_2 a_0}{2 |a_0|^2} \right)^{1/2}
 \div_{\Gamma} \mathbf{M} ds \right\}. $$
We also define $q_1$ and $q_2$ by
$$q_1 = b_1 |a_0| + a^*_1 a_0 /|a_0| \,\,\,\,\,\,\,\,\,q_2 = b_2 |a_0| + a^*_2 a_0 /|a_0|,$$
we find that
 $$\Re(A_3) \geq -\frac{|q_1|}{4} \| \div_{\Gamma} \mathbf{J} \|^2_{L^2(\Gamma)} -\frac{|q_1|}{4|a_0|^2} \| \div_{\Gamma}
 \tilde{\mathbf{M}} \|^2_{L^2(\Gamma)} $$
 $$ -\frac{|q_2|}{4} \| \div_{\Gamma} \tilde{\mathbf{J}} \|^2_{L^2(\Gamma)} -\frac{|q_2|}{4|a_0|^2} \|
 \div_{\Gamma} \mathbf{M} \|^2_{L^2(\Gamma)}. $$
Using the conditions on coefficients
$$\Re(a_j) + \frac{|q_j|}{2} = 0,\,\,\,\,\,\,\, \text{where} \,\,\,j = 1,2$$
and from the sufficient uniqueness conditions we have that $\Re(a_j) = \Re(b_j^* a_0)$. \\
Thus, we obtain
$$\Re(A_2) + \Re(A_3) \geq 0 .$$
Finally we have for operator $A$ that
$$ \Re(A)\geq \alpha \left( \|\mathbf{J}\|^2_{-1/2, \div_{\Gamma}} + \| \mathbf{M} \|^2_{-1/2, \div_{\Gamma}} \right) + \frac{\Re(a_0)}{2} \|\mathbf{J}\|^2_{L^2(\Gamma)} + \frac{\Re(a_0)}{2 |a_0|^2} \|\mathbf{M}\|^2_{L^2(\Gamma)} $$
 \end{proof}
 \begin{lem}\label{lemma_LBB}
The operator $B$ verifies the inequality
$$ \sup_{\|U\|_V=1}|B(U, \lambda)| \geq \beta \| \lambda \|_W, \ \forall \lambda \in W = [H^{-1/2} (\Gamma)]^3 \times [H^{-1/2} (\Gamma)]^3 $$
where $U \in V = [H^{-1/2}(div, \Gamma) \cap L^2(\Gamma)]^4$ and $\beta > 0$.
\end{lem}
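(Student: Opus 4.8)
The inequality to be established is the Ladyzhenskaya--Babuška--Brezzi (inf-sup) condition for the constraint form $B$, and the natural strategy is the dual one: for each fixed $\lambda=(\boldsymbol{\lambda}_J,\boldsymbol{\lambda}_M)\in W$ I would exhibit an explicit test field $U\in V$ for which $B(U,\lambda)$ is comparable to $\|\lambda\|_W^2$ while $\|U\|_V\le C\|\lambda\|_W$; dividing then yields the claim with an explicit $\beta$. The decisive structural observation is that the multiplier-dual variables $\tilde{\mathbf{J}}$ and $\tilde{\mathbf{M}}$ enter $B$ through the identity, i.e.\ \emph{unconstrained}, whereas $\mathbf{J}$ and $\mathbf{M}$ enter only through $\mathbf{n}\times(\cdot)$. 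I would therefore simply set $\mathbf{J}=\mathbf{M}=0$, so that
$$ B\big((0,0,\tilde{\mathbf{J}},\tilde{\mathbf{M}}),(\boldsymbol{\lambda}_J,\boldsymbol{\lambda}_M)\big)=\langle\boldsymbol{\lambda}_J,\tilde{\mathbf{J}}\rangle+\langle\boldsymbol{\lambda}_M,\tilde{\mathbf{M}}\rangle, $$
which reduces everything to the $H^{-1/2}(\Gamma)$--$H^{1/2}(\Gamma)$ duality pairing.

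Next I would invoke the definition of the norm on $W$ as a dual norm: since $H^{-1/2}(\Gamma)=\big(H^{1/2}(\Gamma)\big)'$, for any $\varepsilon>0$ there exist $\tilde{\mathbf{J}},\tilde{\mathbf{M}}\in H^{1/2}(\Gamma)$ with $\|\tilde{\mathbf{J}}\|_{1/2}=\|\boldsymbol{\lambda}_J\|_{-1/2}$, $\|\tilde{\mathbf{M}}\|_{1/2}=\|\boldsymbol{\lambda}_M\|_{-1/2}$ and
$$ \langle\boldsymbol{\lambda}_J,\tilde{\mathbf{J}}\rangle\ge(1-\varepsilon)\|\boldsymbol{\lambda}_J\|_{-1/2}^2,\qquad \langle\boldsymbol{\lambda}_M,\tilde{\mathbf{M}}\rangle\ge(1-\varepsilon)\|\boldsymbol{\lambda}_M\|_{-1/2}^2. $$
The point that must be checked is that these fields actually belong to $V$ with controlled norm. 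This follows from the continuous embedding $H^{1/2}(\Gamma)\hookrightarrow H^{-1/2}(\div,\Gamma)\cap L^2(\Gamma)$: indeed $\|\mathbf{v}\|_{L^2}\le C\|\mathbf{v}\|_{1/2}$, $\|\mathbf{v}\|_{-1/2,H(\Gamma)}\le\|\mathbf{v}\|_{L^2}$, and $\div_{\Gamma}$ maps $H^{1/2}(\Gamma)$ boundedly into $H^{-1/2}(\Gamma)$, so that $\|\mathbf{v}\|_{-1/2,\div_{\Gamma}}\le C\|\mathbf{v}\|_{1/2}$. Hence the test field $U=(0,0,\tilde{\mathbf{J}},\tilde{\mathbf{M}})$ satisfies $\|U\|_V\le C\big(\|\boldsymbol{\lambda}_J\|_{-1/2}^2+\|\boldsymbol{\lambda}_M\|_{-1/2}^2\big)^{1/2}=C\|\lambda\|_W$.

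Combining the two displays gives $B(U,\lambda)\ge(1-\varepsilon)\|\lambda\|_W^2$ while $\|U\|_V\le C\|\lambda\|_W$, whence
$$ \sup_{\|U\|_V=1}|B(U,\lambda)|\ge\frac{|B(U,\lambda)|}{\|U\|_V}\ge\frac{1-\varepsilon}{C}\,\|\lambda\|_W, $$
and letting $\varepsilon\to0$ yields the assertion with $\beta=1/C$. I expect the only genuinely delicate point to be the norm bookkeeping of the previous paragraph: one must make sure that the pairing appearing in $B$ is exactly the $H^{-1/2}$--$H^{1/2}$ duality, so that the dual-norm characterisation applies, and that the embedding constant $C$ for $H^{1/2}(\Gamma)\hookrightarrow H^{-1/2}(\div,\Gamma)\cap L^2(\Gamma)$ is finite and independent of $\lambda$; the $\varepsilon$-realisation of the supremum is standard and causes no trouble. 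Everything else — the reduction $\mathbf{J}=\mathbf{M}=0$ and the final division — is elementary.
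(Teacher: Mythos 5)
Your proof is correct and begins exactly as the paper's does --- setting $\mathbf{J}=\mathbf{M}=0$ so that only the unconstrained variables $\tilde{\mathbf{J}},\tilde{\mathbf{M}}$ remain in $B$ --- but from that point the two arguments genuinely diverge. The paper constructs explicit test fields by applying the single-layer potential to the multipliers, $\hat{\mathbf{J}}(x)=\int_{\Gamma}|x-y|^{-1}\boldsymbol{\lambda}_J(y)\,ds_y$ (similarly $\hat{\mathbf{M}}$), normalizes them, and concludes from two potential-theoretic facts: the Planchard--N\'ed\'elec coercivity of the single-layer operator on $H^{-1/2}(\Gamma)$ (cited from \cite{CD}) and the mapping bound $\|\hat{\mathbf{J}}\|_{-1/2,\div_{\Gamma}}\le C_J\|\boldsymbol{\lambda}_J\|_{-1/2,\Gamma}$. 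You instead bypass potential theory entirely: a near-maximizer of the $H^{-1/2}$--$H^{1/2}$ duality pairing, pushed into $V$ by the embedding $H^{1/2}(\Gamma)\hookrightarrow H^{-1/2}(\div,\Gamma)\cap L^2(\Gamma)$. Your route is more elementary and makes the inf-sup constant transparent ($\beta=1/C$ with $C$ the embedding constant); in fact the bound the paper cites for $\hat{\mathbf{J}}$ implicitly contains your embedding, since the single-layer operator smooths $H^{-1/2}(\Gamma)$ into $H^{1/2}(\Gamma)$ componentwise and one still needs $H^{1/2}\subset H^{-1/2}(\div,\Gamma)$ afterwards, so you have isolated the real analytic content of that step. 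What the paper's construction buys is a test field built concretely from the multiplier itself --- the form of the argument that can be imitated at the discrete Rao--Wilton--Glisson level --- and it shifts onto the cited literature the one step you must justify directly, namely that $\div_{\Gamma}$ maps tangential $H^{1/2}$ fields boundedly into $H^{-1/2}(\Gamma)$ (standard for smooth $\Gamma$, delicate for merely Lipschitz $\Gamma$); that embedding is the only technically fragile point of your version. Finally, both proofs share the same unaddressed loose end: the chosen test fields should be tangential in order to belong to $V$, whereas $W$ consists of three-component fields, and the paper's single-layer potentials are no more tangential than your duality maximizers --- so this is not a defect of your argument relative to the paper's.
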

\begin{proof}: Here we have to show that there exists $\beta > 0$ such that
 $$ \sup_{\|U\|_V=1} \left| \int_{\Gamma} \boldsymbol{\lambda}_J \cdot (\tilde{\mathbf{J}} - n\times \mathbf{J}) + \boldsymbol{\lambda}_M \cdot (\tilde{\mathbf{M}} - n\times \mathbf{M}) ds \right| \geq \beta \| \lambda \|_W $$
 First, we take
 $$ \mathbf{J} = 0;\ \mathbf{M} = 0;\ \tilde{\mathbf{J}} = \frac{\hat{\mathbf{J}}}{\| \hat{\mathbf{J}} \|_V}\ \ and\ \ \tilde{\mathbf{J}} = \frac{\hat{\mathbf{M}}}{\| \hat{\mathbf{M}} \|_V} $$
$$\hat{\mathbf{J}} = \int_{\Gamma \setminus x} \frac{ \boldsymbol{\lambda}_J }{|x - y|} ds_y \ \ \ and\ \ \ \hat{\mathbf{M}} = \int_{\Gamma \setminus x} \frac{ \boldsymbol{\lambda}_M }{|x - y|} ds_y $$
 so, we get following inequality 
 $$\sup_{\|U\|_V=1} \left| \int_{\Gamma} \boldsymbol{\lambda}_J \cdot (\tilde{\mathbf{J}} - n\times \mathbf{J}) + \boldsymbol{\lambda}_M \cdot (\tilde{\mathbf{M}} - n\times \mathbf{M}) ds \right| \geq $$
 \begin{equation}\label{lambda_lambda}
 \frac{1}{\| \hat{\mathbf{J}} \|_{-1/2,\div_{\Gamma}} } \iint_{\Gamma \Gamma} \frac{\boldsymbol{\lambda}_J(x) \boldsymbol{\lambda}_J(y) }{|x - y|} ds_y ds_x + \frac{1}{\| \hat{\mathbf{M}}\|_{-1/2,\div_{\Gamma}} } \iint_{\Gamma \Gamma} \frac{\boldsymbol{\lambda}_M(x) \boldsymbol{\lambda}_M(y) }{|x - y|} ds_y ds_x 
 \end{equation}
 Using the Planchard-N\'ed\'elec inequality \cite{CD}, we get
 \begin{equation}\label{double_integrale}
  \iint_{\Gamma \Gamma} \frac{\mathbf{\lambda}(x) \mathbf{\lambda}(y) }{|x - y|} ds_y ds_x \geq \beta \| \mathbf{\lambda} \|^2_{-1/2, \Gamma}
 \end{equation}

and therefore, by (\ref{lambda_lambda}) and (\ref{double_integrale}), it follows that
$$\sup_{\|U\|_V=1} \left| \int_{\Gamma} \boldsymbol{\lambda}_J \cdot (\tilde{\mathbf{J}} - n\times \mathbf{J}) + \boldsymbol{\lambda}_M \cdot (\tilde{\mathbf{M}} - n\times \mathbf{M}) ds \right| \geq$$
 \begin{equation}\label{lambda_lambda_2}
    \frac{1}{\| \hat{\mathbf{J}} \|_{-1/2,\div_{\Gamma}}} \beta_J \| \mathbf{\lambda}_J \|^2_{-1/2, \Gamma} + \frac{1}{\| \hat{\mathbf{M}} \|_{-1/2,\div_{\Gamma}}} \beta_M \| \mathbf{\lambda}_M \|^2_{-1/2, \Gamma}
 \end{equation}
 furthermore, there exist $C_J > 0$ and $C_M > 0$ such that
 \begin{equation}
  \| \hat{\mathbf{J}} \|_{-1/2,\div_{\Gamma}} \leq C_J \| \mathbf{\lambda}_J \|_{-1/2, \Gamma} \ \ and \ \ \| \hat{\mathbf{M}} \|_{-1/2,\div_{\Gamma}} \leq C_M \| \mathbf{\lambda}_M \|_{-1/2, \Gamma}
 \end{equation}
 Consequently, we obtain that
 \begin{eqnarray*}
 \sup_{\|U\|_V=1} \left| \int_{\Gamma} \boldsymbol{\lambda}_J \cdot (\tilde{\mathbf{J}} - n\times \mathbf{J}) + \boldsymbol{\lambda}_M \cdot
(\tilde{\mathbf{M}} - n\times \mathbf{M}) ds \right| &\geq& \frac{\beta_J}{C_J} \| \mathbf{\lambda}_J \|_{-1/2, \Gamma} + \frac{\beta_M}{C_M} \|
\mathbf{\lambda}_M \|_{-1/2, \Gamma}\\
 &\geq& \beta \| \lambda \|_W
 \end{eqnarray*}
 where $\beta = \min( \beta_J/C_J; \beta_M/C_M)$.
\end{proof}
 \begin{theorem}
 The problem (\ref{prob_3D}) admits a unique solution $U \in V = [H^{-1/2}(div, \Gamma) \cap L^2(\Gamma)]^4$ and $\lambda \in [H^{-1/2} (\Gamma)]^2$, if coefficients satisfy
 \begin{equation}\label{eq_ex&un_coef_3d}
  \Re(a_j) + \frac{|a_0||b_j + a^*_j/a^*_0|}{2} = 0 \ \ for\ \ j=1,2.
 \end{equation}
 \end{theorem}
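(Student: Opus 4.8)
The plan is to recognize Problem~\ref{prob_3D} as a constrained (saddle-point) variational problem of the form $A(U,\Psi)+B^{T}(\lambda,\Psi)=F(\Psi)$, $B(U,\lambda')=0$, and to apply the abstract Babu\v{s}ka--Brezzi theorem for mixed problems (the same framework from \cite{JCN} that was invoked in the two-dimensional case). This reduces the entire statement to verifying four structural hypotheses: continuity of $A$ on $V\times V$, continuity of the constraint form $B$, coercivity of $A$ on the kernel of $B$, and the inf-sup (Ladyzhenskaya--Babu\v{s}ka--Brezzi) condition for $B$. Each of these I would feed directly from the lemmas already proved, so that the theorem becomes essentially a packaging of Lemmas~\ref{lemma_cont_A}, \ref{lemma_coer_A} and \ref{lemma_LBB}.

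First I would assemble the ingredients. Continuity of $A$ is exactly Lemma~\ref{lemma_cont_A}, giving $|A(U,\Psi)|\le C\|U\|_V\|\Psi\|_V$. The inf-sup condition for $B$ is Lemma~\ref{lemma_LBB}, namely $\sup_{\|U\|_V=1}|B(U,\lambda)|\ge\beta\|\lambda\|_W$, while continuity of $B$ follows directly from Cauchy--Schwarz in the relevant duality pairing together with the boundedness of the map $\mathbf{u}\mapsto\mathbf{n}\times\mathbf{u}$ on $V$. The coercivity ingredient is Lemma~\ref{lemma_coer_A}, and this is where the coefficient condition (\ref{eq_ex&un_coef_3d}) enters: it is precisely the hypothesis $\Re(a_j)+|q_j|/2=0$, with $q_j=b_j|a_0|+a_j^{*}a_0/|a_0|$, that forces $\Re(A_2)+\Re(A_3)\ge 0$, so that $\Re(A(U,U^{*}))$ is bounded below by the strictly positive contribution of $A_1$, provided $\Re(a_0)>0$ as guaranteed by the sufficient uniqueness conditions.

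The hard part, and the point I would treat most carefully, is that the lower bound of Lemma~\ref{lemma_coer_A} only controls the principal unknowns: it involves $\|\mathbf{J}\|_{-1/2,\div_\Gamma}$, $\|\mathbf{M}\|_{-1/2,\div_\Gamma}$ and their $L^2$ norms, but not the auxiliary fields $\tilde{\mathbf{J}},\tilde{\mathbf{M}}$. Hence $A$ is \emph{not} coercive on all of $V$, and it is essential that coercivity be required only on the kernel $Z=\{U\in V:\ B(U,\lambda')=0\ \forall\lambda'\}$. On $Z$ the constraint reads $\tilde{\mathbf{J}}=\mathbf{n}\times\mathbf{J}$ and $\tilde{\mathbf{M}}=\mathbf{n}\times\mathbf{M}$; since $\mathbf{n}\times\cdot$ is an isomorphism on the trace spaces involved, one has $\|\tilde{\mathbf{J}}\|_V\simeq\|\mathbf{J}\|_V$ and $\|\tilde{\mathbf{M}}\|_V\simeq\|\mathbf{M}\|_V$, so the estimate of Lemma~\ref{lemma_coer_A} upgrades to $\Re(A(U,U^{*}))\ge\alpha\|U\|_V^2$ for all $U\in Z$. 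I would verify this norm equivalence on the kernel explicitly, as it is what closes the coercivity requirement.

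With the four hypotheses in place, the Babu\v{s}ka--Brezzi theorem from \cite{JCN} yields at once existence, uniqueness and continuous dependence of the pair $(U,\lambda)\in V\times W$ solving (\ref{eq_prob_3D}), which is exactly the assertion. The only non-mechanical steps are the norm equivalence on the kernel just described and the bookkeeping that the single algebraic condition (\ref{eq_ex&un_coef_3d}) is what renders $\Re(A_2)+\Re(A_3)$ nonnegative for both $j=1$ and $j=2$ simultaneously; everything else is a routine application of the abstract mixed-problem theory.
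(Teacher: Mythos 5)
Your overall strategy is exactly the paper's: its proof of this theorem is a single sentence invoking the preceding lemmas and the saddle-point theory of \cite{JCN}, so packaging Lemmas \ref{lemma_cont_A}, \ref{lemma_coer_A} and \ref{lemma_LBB} into the Babu\v{s}ka--Brezzi theorem is indeed the intended argument, and you go further than the paper by making explicit the one hypothesis it never discusses, namely that coercivity is needed precisely on the kernel of $B$. The gap is in how you close that hypothesis. Your claim that on the kernel (where $\tilde{\mathbf{J}}=\mathbf{n}\times\mathbf{J}$, $\tilde{\mathbf{M}}=\mathbf{n}\times\mathbf{M}$) one has $\|\tilde{\mathbf{J}}\|_V\simeq\|\mathbf{J}\|_V$ because ``$\mathbf{n}\times\cdot$ is an isomorphism on the trace spaces'' is false in the norm that matters. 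The map $\mathbf{n}\times\cdot$ is an $L^2$ isometry, but it carries $H^{-1/2}(\mathrm{div},\Gamma)$ onto $H^{-1/2}(\mathrm{rot},\Gamma)$, not onto itself: one has $\mathrm{div}_\Gamma(\mathbf{n}\times\mathbf{J})=-\mathrm{rot}_\Gamma\mathbf{J}$. Hence on the kernel the $V$-norm of $U$ contains $\|\mathrm{div}_\Gamma\tilde{\mathbf{J}}\|_{-1/2,H(\Gamma)}=\|\mathrm{rot}_\Gamma\mathbf{J}\|_{-1/2,H(\Gamma)}$, a quantity that is not bounded by $\|\mathbf{J}\|_{-1/2,\mathrm{div}_\Gamma}+\|\mathbf{J}\|_{L^2(\Gamma)}$. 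Concretely, take $\mathbf{J}=\nabla_\Gamma\varphi_k\times\mathbf{n}$ with $\varphi_k$ a Laplace--Beltrami eigenfunction of eigenvalue $\lambda_k$: then $\mathrm{div}_\Gamma\mathbf{J}=0$ and $\|\mathbf{J}\|_{L^2(\Gamma)}\sim\lambda_k^{1/2}$, while $\|\mathrm{rot}_\Gamma\mathbf{J}\|_{-1/2,H(\Gamma)}=\|\Delta_\Gamma\varphi_k\|_{-1/2,H(\Gamma)}\sim\lambda_k^{3/4}$, so the ratio blows up as $k\to\infty$. Consequently the lower bound of Lemma \ref{lemma_coer_A}, which involves only $\mathbf{J}$ and $\mathbf{M}$, does not upgrade to coercivity on the kernel in the $V$-norm by your norm-equivalence argument.

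Nor can the discarded terms rescue the estimate: under the exact equality $\Re(a_j)+|q_j|/2=0$ assumed in the theorem, the bound $\Re(A_2)+\Re(A_3)\geq 0$ holds with zero slack (the two contributions cancel exactly), so no positive multiple of $\|\mathrm{div}_\Gamma\tilde{\mathbf{J}}\|^2_{L^2(\Gamma)}$ or $\|\mathrm{div}_\Gamma\tilde{\mathbf{M}}\|^2_{L^2(\Gamma)}$ survives in the lower bound. To close the argument you would need either a strict inequality in the coefficient condition---which would leave such terms with a positive coefficient and, since $L^2(\Gamma)\hookrightarrow H^{-1/2}(\Gamma)$, would then genuinely yield kernel coercivity---or a different choice of space or norm for the auxiliary unknowns. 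To be fair, the paper's own one-line proof silently suffers from the same defect, so your attempt is the more honest account of where the difficulty lies; but as a self-contained proof it does not verify the coercivity hypothesis it invokes.
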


 \begin{proof}
Using the above lemma, we can show as in \cite{JCN} that the variational problem \ref{prob_3D} has a unique solution.
 \end{proof}
 In the next, a discretization of the problem \ref{prob_3D} is done with Reo-Wilton functions. 
\subsection{Discretization of the variational problem with HOIBC}\label{Discretization}
The first step is to approach the surface of the obstacle by a surface $\Gamma_h$ composed of finite number of two dimensional elements.
These elements are triangular facets denoted by $T_ i$ for $i=1$ to $N_T$:
$$\Gamma_h = \bigcup_{i=1}^{N_T} T_i .$$
We denote by $N_e$ the total number of edges of the mesh component $\Gamma_h$. 
Let $\{ \mathbf{f}_i \}_{i=1,N_e}$ be a function of Rao-Wilton-Glisson functions,\cite{RWG, AWG}. We decompose the electric and magnetic currents with thes basis functions:
$$\mathbf{J}(y) = \sum^{N_e}_{l=1} J_l \mathbf{f}_l(y),\ \ \mathbf{M}(y) = \sum^{N_e}_{l=1} M_l \mathbf{f}_l(y) ,$$
as well as auxiliary unknowns
$$\tilde{\mathbf{J}}(x) = \sum^{N_e}_{l=1} \tilde{J}_l \mathbf{f}_l(x),\ \ \tilde{\mathbf{M}}(x) = \sum^{N_e}_{l=1} \tilde{M}_l \mathbf{f}_l(x) .$$
For the  Lagrange multipliers, we use  basis functions proposed in \cite{Bendali_OCT_1999}:
\begin{equation}\label{relation_3}
\lambda_J = \sum^{N_e}_{k=1} \lambda_{J k} \mathbf{g}_k,\ \ \lambda_M = \sum^{N_e}_{k=1} \lambda_{M k} \mathbf{g}_k ,
\end{equation}
where $\mathbf{g_k}$ is defined as follows: \\
\begin{equation}\label{function_g_k}
\mathbf{g}_n(x) =\left\{
\begin{array}{cc}
(1 - 2\omega^+_{i+2}(x) )(\nu_n \times \mathbf{n}^+),\ \ \ \ x \in T^+_n \\
(1 - 2\omega^-_{i+2}(x) )(\nu_n \times \mathbf{n}^-),\ \ \ \  x \in T^-_n
\end{array} \right.
\end{equation}
with $\nu_n$ is a direction vector of the edge $\mathbf{n}$ 
and $\{ \omega_i \}_{i = 1,3}$ are barycentric coordinates of $x$ relative to triangles $T^+_n$ or $T^+_n$ (see \cite{JJin}).

Then, the discretized problem of  (\ref{eq_prob_3D}) is:
\begin{equation}\label{eq_sys_3D_h}
\begin{cases}
A^h(U_h, \Psi_h) + B^h(\lambda_h, \Psi_h) = \sum_{i=1}^{N_e} < \mathbf{E}^{inc}, \mathbf{f}_i > + \sum_{i=1}^{N_e} < \mathbf{H}^{inc}, \mathbf{f}_i > \\
B^h(U_h, \lambda_h) = 0
\end{cases}
\end{equation}
where
$$ A^h(U_h, \Psi_h) = \sum^{N_e}_{i,j = 1} <Z_0 (B-S) \textbf{f}_j, \mathbf{f}_i > J_j + Z^{-1}_0 \sum^{N_e}_{i,j = 1} < (B-S) \textbf{f}_j, \mathbf{f}_i > M_j $$
$$ + \sum^{N_e}_{i,j = 1} <Q \textbf{f}_j, \mathbf{f}_i> M_j - \sum^{N_e}_{i,j = 1} <Q\textbf{f}_j, \mathbf{f}_i > J_j + \frac{a_0}{2} \sum^{N_e}_{i,j = 1} < \textbf{f}_j, \mathbf{f}_i> J_j + \frac{1}{2 a_0} \sum^{N_e}_{i,j = 1} < \textbf{f}_j, \mathbf{f}_i > M_j $$
$$ - \frac{a_1}{2} \sum^{N_e}_{i,j = 1} < \mathbf{div}_{\Gamma}\textbf{f}_j, \mathbf{div}_{\Gamma}\mathbf{f}_i > J_j  - \frac{a_2}{2} \sum^{N_e}_{i,j = 1} < \mathbf{div}_{\Gamma}\textbf{f}_j, \mathbf{div}_{\Gamma} \mathbf{f}_i > \tilde{J}_j $$
$$ + \frac{b_1}{2} \sum^{N_e}_{i,j = 1} < \mathbf{div}_{\Gamma}\textbf{f}_j, \mathbf{div}_{\Gamma}\mathbf{f}_i > \tilde{M}_j - \frac{b_2}{2} \sum^{N_e}_{i,j = 1} < \mathbf{div}_{\Gamma}\textbf{f}_j, \mathbf{div}_{\Gamma}\mathbf{f}_i > M_j $$
$$ - \frac{b_1}{2 a_0} \sum^{N_e}_{i,j = 1} < \mathbf{div}_{\Gamma}\textbf{f}_j, \mathbf{div}_{\Gamma}\mathbf{f}_i > \tilde{M}_j - \frac{b_2}{2 a_0} \sum^{N_e}_{i,j = 1} < \mathbf{div}_{\Gamma}\textbf{f}_j, \mathbf{div}_{\Gamma}\mathbf{f}_i > M_j $$
$$ + \frac{a_1}{2 a_0} \sum^{N_e}_{i,j = 1} < \mathbf{div}_{\Gamma}\textbf{f}_j, \mathbf{div}_{\Gamma}\mathbf{f}_i > J_j - \frac{a_2}{2 a_0} \sum^{N_e}_{i,j = 1} < \mathbf{div}_{\Gamma} \textbf{f}_j, \mathbf{div}_{\Gamma}\mathbf{f}_i > \tilde{J}_j ;$$
and
$$ B^h (U_h, \lambda'_h) = \sum^{N_e}_{i, k = 1} < \mathbf{g}_k, \mathbf{f}_i > \tilde{J}_i - \sum^{N_e}_{i,k = 1} < \mathbf{g}_k, \mathbf{n}\times\mathbf{f}_i > J_i$$
$$ + \sum^{N_e}_{i,k = 1} < \mathbf{g}_k, \mathbf{f}_i > \tilde{M}_i - \sum^{N_e}_{i,k = 1} < \mathbf{g}_k, \mathbf{n}\times\mathbf{f}_i > M_i $$
Obviously, this problem has a unique solution since we use conforming finite element method.\\

Now, we express the problem in matrix form:
$$(B-S)_{i,j} = i\int \int_{\Gamma_h} k G(s,s') \mathbf{f}_j(s') \cdot \mathbf{f}_i(s) - \frac{1}{k} G(s,s') (\mathbf{div}_{\Gamma}\mathbf{f}_i) (\mathbf{div}_{\Gamma}' \mathbf{f}_j) ds ds' $$
$$Q_{i,j} = -i\int \int_{\Gamma_h} [\mathbf{f}_i(s) \times \mathbf{f}_j(s')] \cdot \nabla'_{\Gamma} G(s,s') ds ds' $$
$$I_{i,j} = \int_{\Gamma_h} \mathbf{f}_i \cdot \mathbf{f}_j ds ; \ \ \ \ D_{i,j} = \int_{\Gamma_h} (\mathbf{div}_{\Gamma}\mathbf{f}_j) (\mathbf{div}_{\Gamma}\mathbf{f}_i) ds $$
\begin{equation}\label{CH_CK}
C_{Hi,j} = \int_{\Gamma_h} \mathbf{g}_i \cdot \mathbf{f}_j ds ; \ \ \ \ C_{Ki,j} = \int_{\Gamma_h} \mathbf{g}_i \cdot (\mathbf{n} \times \mathbf{f}_j) ds,
\end{equation}
where $[C_H]$ is a nonsingular diagonal matrix. Therefore it is invertible.\\
Then, we define $[A1]$ and $[A2]$ by
\begin{equation}\label{A1_A2}
[A1] = [(B-S)] + \frac{a_0}{2}[I] - \frac{a_1}{2}[D],\ \ [A2] =[(B-S)] + \frac{1}{2 a_0}[I] - \frac{b_2}{2 a_0}[D].
\end{equation}
Using (\ref{CH_CK}) and (\ref{A1_A2}), we now present (\ref{eq_sys_3D_h}) in the following matrix form
\begin{equation}\label{system_ch_ck}
\left (
\begin{array}{cccccc}
[A1] & [Q] & 0 & \frac{b_1}{2}[D] & [C_K]^T & 0 \\
{[Q]}^T & [A2] & -\frac{a_2}{2 a_0}[D] & 0 & 0 & [C_K]^T \\
0 & -\frac{b_2}{2}[D] & -\frac{a_2}{2}[D] & 0 & [C_H]^T & 0 \\
\frac{a_1}{2 a_0}[D] & 0 & 0 & -\frac{b_1}{2 a_0}[D] & 0 & [C_H]^T \\
{[C_K]} & 0 & [C_H] & 0 & 0 & 0 \\
0 & [C_K] & 0 & [C_H] & 0 & 0
\end{array}
\right)
\left (
\begin{array}{cccccc}
\overline{J} \\ \overline{M} \\ \overline{\tilde{J}} \\ \overline{\tilde{M}} \\ \overline{\lambda_J} \\ \overline{\lambda_M}
\end{array}
\right) =
\left (
\begin{array}{cccccc}
\overline{E} \\ \overline{H} \\ 0 \\ 0 \\ 0 \\ 0
\end{array}
\right)
\end{equation}
where right-side vectors $\overline{E}$, $\overline{H}$ are defined as follows:
$$E_i = \int_{\Gamma_h} \mathbf{E}^{inc}\cdot\mathbf{f}_i ds ; \ \ \ \ H_i = \int_{\Gamma_h} \mathbf{H}^{inc}\cdot\mathbf{f}_i ds .$$
In the second step, we focus ours attention on elimination auxiliary currents and the Lagrange multiplier.
 Using the definition of the basis function (\ref{function_g_k}), we obtain that
$$\int_{\Gamma} \mathbf{g}_n(s) \cdot \tilde{\mathbf{J}} ds = \frac{|T^+_n| + |T^-_n|}{3} \tilde{J}_n .$$
On the other side, we have
$$\int_{\Gamma} \mathbf{g}_n(s) \cdot (\mathbf{n}\times\mathbf{J}) ds = \int_{T^+_n\cup T^-_n} \mathbf{g}_n(s) \cdot (\mathbf{n}\times\mathbf{J})  ds,$$
that is calculated with help of Gaussian quadrature.
And we get next equation
\begin{equation}\label{eq_chjt+ckj=0}
\frac{|T^+_n| + |T^-_n|}{3} \tilde{J}_n = \int_{T^+_n\cup T^-_n} \mathbf{g}_n(s) \cdot (\mathbf{n}\times\mathbf{J}) ds
\end{equation}
Thus, we obtain
\begin{equation}\label{eq_jt=-chinvckj}
\tilde{J}_n = \frac{3}{|T^+_n| + |T^-_n|} \int_{T^+_n\cup T^-_n} \mathbf{g}_n(s) \cdot (\mathbf{n}\times\mathbf{J}) ds
\end{equation}
on each edge of the mesh.
Consequently, we conclude that the auxiliary currents can be rewritten as
%
\begin{equation}\label{star}
\begin{array}{cc}
\overline{\tilde{J}} = -[C_H]^{-1} [C_K] \overline{J}\,\,\,\, \text{and} \,\,\,\,\overline{\tilde{M}} = -[C_H]^{-1} [C_K] \overline{M}
\end{array}
\end{equation}
%
So (\ref{system_ch_ck}) together with (\ref{star}) yields the following  expression of Lagrange multipliers in terms of $\overline{J}$ and $\overline{M}$
\begin{equation}\label{star_2}
\overline{\lambda_J} = \frac{b_2}{2} [C_H]^{-T} [D] \overline{M} - \frac{a_2}{2} [C_H]^{-T} [D] [C_H]^{-1} [C_K] \overline{J}
\end{equation}
\begin{equation}\label{star_3}
\overline{\lambda_M}= - \frac{a_2}{2 a_0} [C_H]^{-T} [D] \overline{J} - \frac{b_1}{2 a_0} [C_H]^{-T} [D] [C_H]^{-1} [C_K] \overline{M}
\end{equation}
Now we define  matrixes:
 \begin{equation}\label{star_4}
[C_{KH}] = [C_H]^{-1} [C_K] \,\,\,\, \text{and} \,\,\,\,  [C_{KH}]^T = [C_K]^T [C_H]^{-T}
\end{equation}

%
%
%
%
%

Then, by (\ref{star_2}),(\ref{star_3}) and (\ref{star_4}),  we obtain the final system
$$
\left(
\begin{array}{cc}
[A1] - \frac{a_2}{2} [C_{KH}]^T [D] [C_{KH}] & [Q] - \frac{b_1}{2} [D] [C_{KH}] + \frac{b_2}{2} [C_{KH}]^T [D] \\
\\
{[Q]}^T + \frac{a_2}{2 a_0}[D] [C_{KH}] - \frac{a_1}{2 a_0}[C_{KH}]^T [D] & [A2] - \frac{b_1}{2 a_0}[C_{KH}]^T [D] [C_{KH}]
\end{array}
\right)
\left(
\begin{array}{cc}
\overline{J} \\
\\
\overline{M}
\end{array}
\right) =
\left (
\begin{array}{cc}
\overline{E} \\
\\
\overline{H}
\end{array}
\right) $$

%

\subsection{Numerical results}\label{results}


We present now some numerical results obtained. As a first example we consider the case of a coated conducting sphere with a conductor radius of $1.5\lambda_0$ and coating thickness of $0.0075\lambda_0$, with $\eps_r=5$ and $\mu_r=1.0$.
 \Cref{rot1} show the $\theta \theta$  components of the bistatic RCS for a plane wave incident from $\theta=0$. Three solutions are included: the exact serie solution (MIE) and the solutions of the methods of moment studied below with SIBC and the HOIBC. The Figure clearly shows the increased accuracy of the HOIBC solution relative to the SIBC solution. The SIBC gives only the average behavior of the scattered field while the HOIBC accurately predicts the sidelobe behavior.

    \begin{figure}[H]
    \centering
        \includegraphics[scale=0.63]{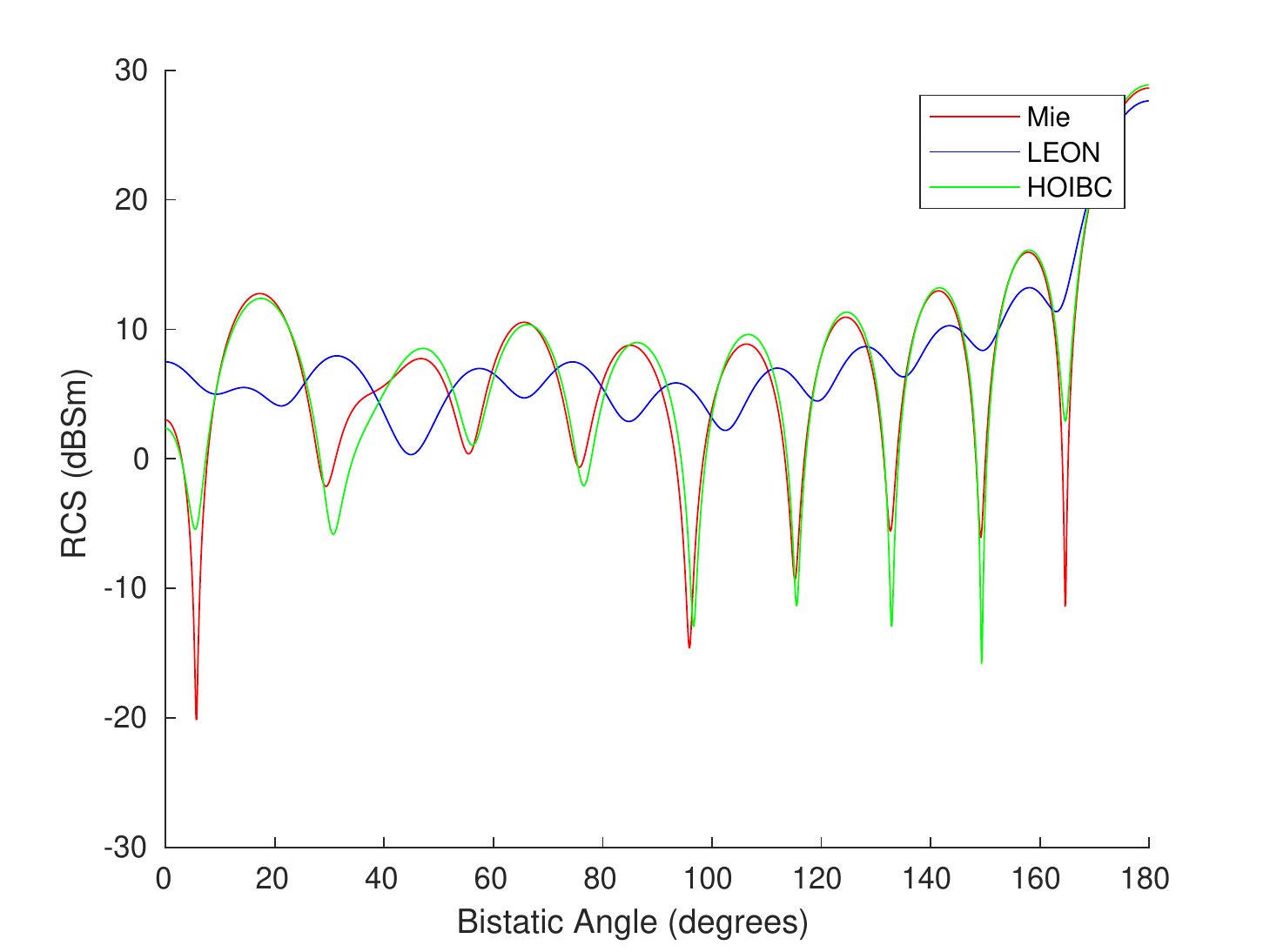}
        \caption{$\theta \theta$ component of the bistatic RCS for a coated conducting sphere with frequency $f= 0.45GHz$, layer thickness $d = 0.05m$, $\varepsilon_r = 5$ and $\mu_r = 1$.
        Exact serie solution and HOIBC solution.}
        \label{rot1}
    \end{figure}

The second test is a coated conducting spheroid whose the radii are 0.5m and 1m with a coating thickness of $0.17\lambda_0$, with $\eps_r=5$ and $\mu_r=1.0$.
 \Cref{rot3} and \Cref{rot4} show  the $\theta \theta$ and $\phi \phi$ components of the monostatic RCS. Three solutions are included : a method of moments solution called PMCHWT \cite{BEM}, the HOIBC solution and the SIBC solution. It does not exits exact solution for this case.
 We note that a slight difference between the PMCHWT solution and the HOIBC solution and the SIBC solution is very poor. 
 In both cases, excellent results are obtained for all angles of incidence. But, the finite radii of curvature on the spheroid contribute to the inaccuracy of the HOIBC result.

\begin{figure}[H]
\centering
        \includegraphics[scale=0.5]{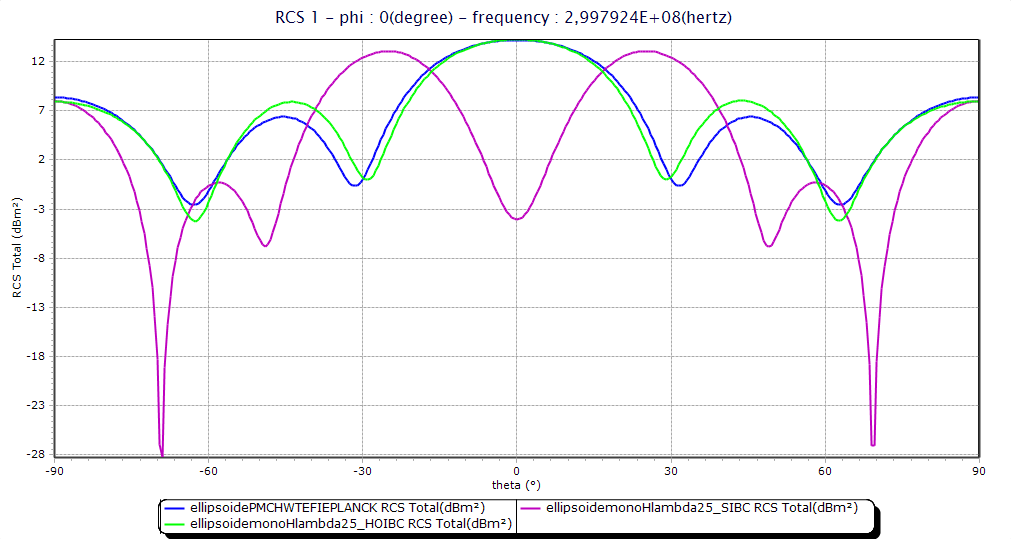}
        \caption{$\theta \theta$ component of the monostatic RCS for a coated conducting spheroid, PMCHWT solution, SIBC solution and HOIBC solution.}
        \label{rot3}
    \end{figure}

\begin{figure}[H]
\centering
        \includegraphics[scale=0.5]{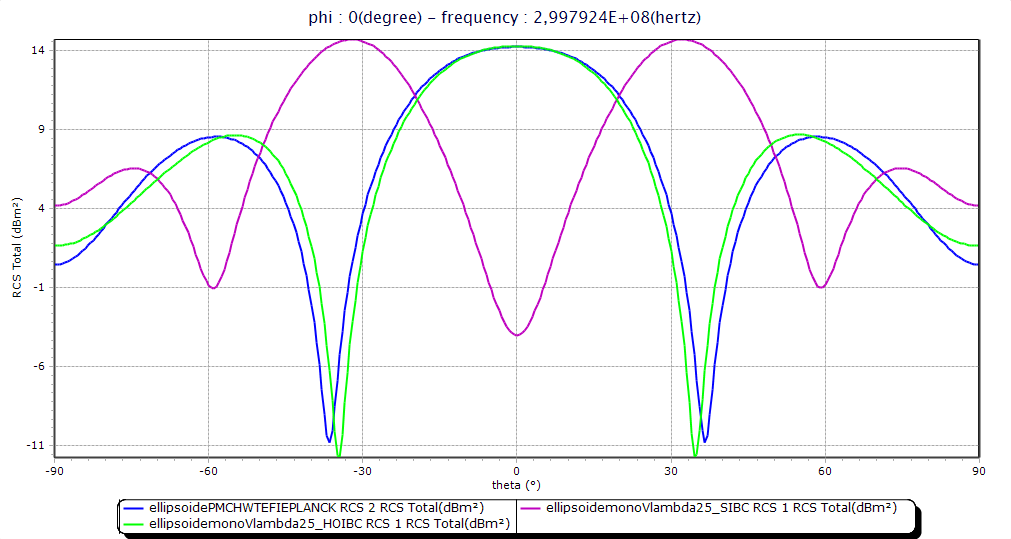}
        \caption{$\phi \phi$ component of the monostatic RCS for a coated conducting spheroid, PMCHWT solution, SIBC solution and HOIBC solution.}
        \label{rot4}
    \end{figure}

\section{Conclusion}\label{conclusion}

In this paper, we proposed a new formulation of electromagnetic scattering problem with higher order impedance boundary condition. We prove the existence and uniqueness of the solution
of the variational formulation. Then we discretize the formulation with RWG basis functions. The validations are shown an important accuracy improvement
of the HOIBC model over the SIBC model in the case of materials of lower index and/or with small losses.

\end{document}